\newcommand{\beq}{\begin{equation}}
\newcommand{\eeq}{\end{equation}}
\DeclareMathOperator*{\essinf}{ess\,inf}
\DeclareMathOperator*{\Var}{Var}
\DeclareMathOperator*{\cl}{cl}
\def \ep{\hbox{ }\hfill$\Box$}
\newtheorem{theorem}{Theorem}[section]
\newtheorem{lemma}[theorem]{Lemma}
\newtheorem{corollary}[theorem]{Corollary}
\newtheorem{proposition}[theorem]{Proposition}
\newtheorem{remark}[theorem]{Remark}
\newtheorem{Assumptions}[theorem]{Assumption}
\newcommand{\PP}{ \mathbb{P}}
\def\theequation{\arabic{section}.\arabic{equation}}
\begin{document} 
\title{\textbf{On the Optimal Management of Public Debt:\\ a Singular Stochastic Control Problem}\footnote{Financial support by the German Research Foundation
(DFG) through the Collaborative Research Centre 1283 ``Taming uncertainty and profiting from randomness and low regularity in analysis, stochastics and their applications'' is gratefully acknowledged.}}
\author{Giorgio Ferrari\thanks{Center for Mathematical Economics, Bielefeld University, Germany; \texttt{giorgio.ferrari@uni-bielefeld.de}}}
\date{\today}
\maketitle

\vspace{0.5cm}

{\textbf{Abstract.}} Consider the problem of a government that wants to reduce the debt-to-GDP (gross domestic product) ratio of a country. The government aims at choosing a debt reduction policy which minimises the total expected cost of having debt, plus the total expected cost of interventions on the debt ratio. We model this problem as a singular stochastic control problem over an infinite time-horizon. In a general not necessarily Markovian framework, we first show by probabilistic arguments that the optimal debt reduction policy can be expressed in terms of the optimal stopping rule of an auxiliary optimal stopping problem. We then exploit such link to characterise the optimal control in a two-dimensional Markovian setting in which the state variables are the level of the debt-to-GDP ratio and the current inflation rate of the country. The latter follows uncontrolled Ornstein-Uhlenbeck dynamics and affects the growth rate of the debt ratio. We show that it is optimal for the government to adopt a policy that keeps the debt-to-GDP ratio under an inflation-dependent ceiling. This curve is given in terms of the solution of a nonlinear integral equation arising in the study of a fully two-dimensional optimal stopping problem.  
\smallskip

{\textbf{Key words}}: singular stochastic control; optimal stopping; free boundary; nonlinear integral equation; debt-to-GDP ratio; inflation rate; debt ceiling.

\smallskip

{\textbf{MSC2010 subject classification}}: 93E20, 60G40, 91B64, 45B05, 60J60.

\smallskip

{\textbf{JEL classification}}: C61, H63.

\section{Introduction}
\label{introduction}

Controlling the debt-to-GDP ratio (also called the ``debt ratio") and keeping it below some desirable level is of fundamental importance for all countries. It has been shown by different authors by means of different statistical and methodological approaches, that high government debt has a negative effect on the long-term economic growth. The usual outcome is that when government debt grows, private investment shrinks, and future growth and future wages lower (see, e.g., \cite{WK15}). In \cite{RRR12} it is shown that high government debt hurts growth even in the absence of a crisis. This negative effect on economic growth from high debt levels has been observed  in 18 different advanced economies (see \cite{CMZ11}). 

In this paper we propose a continuous-time stochastic model for the control of the debt-to-GDP ratio. The problem we have in mind is that of a government aiming to answer the question: \emph{How much is too much?}\footnote{cf.\ \emph{The Economist}, June 3rd 2015.} Following classical macroeconomic theory (see, e.g., \cite{BlanchardFischer}), in any given period the debt ratio stock grows by the existing debt stock multiplied by the difference between real interest rate and GDP growth, less the primary budget balance\footnote{A government budget balance is the overall difference between government revenues and spending. The government budget balance is further split into the primary balance and the structural balance. The primary budget balance equals the government budget balance before interest payments.}. We assume that the government can reduce the level of the debt-to-GDP ratio by adjusting the primary budget balance, e.g.\ through fiscal  interventions like raising taxes or reducing expenses. We therefore interpret the cumulative interventions on the debt ratio as the government's control variable, and we model it as a nonnegative and nondecreasing stochastic process.

Uncertainty in our model comes through the GDP growth rate of the country, and its inflation and nominal interest rate which, by Fisher law \cite{Fisher}, directly affect the growth rate of the debt ratio. We first assume that they are general not necessarily Markovian stochastic processes whose dynamics is not under government control. Indeed, the level of these macroeconomic variables is usually regulated by an autonomous Central Bank, whose action, however, is not modelled in this paper (see, e.g., \cite{CH00} and \cite{HHSZ} for problems related to the optimal control of inflation).

Since high debt-to-GDP ratios can constrain economic growth making it more difficult to break the burden of the debt, we assume that debt ratio generates an instantaneous cost/penalty. This is a general nonnegative convex function of the debt ratio level, that the government would like to keep as close to zero as possible. However, at any time the government decides to intervene in order to reduce the level of the debt ratio, it incurs a cost which is proportional to the amount of the debt reduction. The government thus aims at choosing a (cumulative) debt reduction policy minimising the sum of the total expected cost of having debt, and of the total expected cost of interventions on the debt ratio. 

We model the government's debt management problem as a singular stochastic control problem (see \cite{Shreve} for an introduction), i.e.\ a problem in which the control processes may be singular, as functions of time, with respect to the Lebesgue measure. In our general not necessarily Markovian setting we are able to link the solution of the control problem to that of a suitable optimal stopping problem. In line with \cite{BK}, \cite{DeAFeFe} and \cite{KaratzasShreve84}, among others, we show that the optimal stopping time $\tau^*$ of an auxiliary optimal stopping problem defines the optimal control $\nu^*$ of the original singular stochastic control problem: $\nu^*$ is directly related to the generalised inverse of $\tau^*$. The proof of this result is fully probabilistic, and no regularity of the value functions of the control and optimal stopping problems is needed. Indeed, it relies on a change of variable formula for Lebesgue-Stieltjes integrals.

The link to optimal stopping is then exploited to characterise the optimal debt reduction policy in a Markovian two-dimensional setting. Here the state variables are the levels of the debt ratio and of the inflation rate, while the nominal interest rate and the GDP growth rate are kept constant (see also Remark \ref{rem:dimensionality} below). Assuming that the inflation rate $Y$ evolves according to a one-dimensional Ornstein-Uhlenbeck process, guided by our previous findings we relate the singular stochastic control problem to a suitable fully two-dimensional optimal stopping problem. In the optimal stopping problem the state variables are the inflation rate process $Y$ and its time-integral $Z$. It is well known that $(Z,Y)$ is a time-homogeneous, strong Markov process, whose first component is of bounded-variation (being a time integral). It therefore turns out that the free-boundary formulation of the optimal stopping problem involves a second-order linear partial differential equation of (local) parabolic type. Relying on almost exclusively probabilistic arguments, we show that the optimal stopping rule is triggered by a curve $\hat{y}$ uniquely solving (within a certain functional class) a nonlinear integral equation. Such characterisation of $\hat{y}$ finally gives rise to a complete caracterisation of the optimal debt reduction policy. Indeed, at each time $t$, the latter prescribes to keep the debt ratio $X_t$ below an inflation-dependent level $b(Y_t)$, with the curve $b$ being related to the (generalised) inverse of the free boundary $\hat{y}$. The explicit form of the optimal debt reduction strategy allows us also to obtain interesting economic conclusions (see Section \ref{sec:optimalsol}).

It is worth noticing that the number of papers characterising the optimal policy in multi-dimensional singular stochastic control problems is still limited (see \cite{DeAFeFe} for a recent contribution). In some early papers (see, e.g., \cite{CH94}, \cite{MenaldiTaksar}, \cite{SonerShreve} and \cite{CMW}) fine analytical methods based on the dynamic programming principle and the theory of variational inequalities are employed to study the regularity of the value function of multi-dimensional singular stochastic control problems, and of the related free boundary. However, a characterisation of the latter is not obtained in these works. More recently, assuming that the control acts only in one dimension, two-dimensional degenerate singular stochastic control problems motivated, e.g., by questions of optimal (ir)reversible investment under uncertainty have been solved via different methods: the related free boundary has been characterised through a suitable equation, and the value function has been explicitly determined. We refer to \cite{FedericoPham}, \cite{Ferrari}, \cite{GT} and \cite{MZ}, among others. However, the setting of those papers is different with respect to ours. In fact there the dynamics of a purely controlled state, modelling the production capacity of a firm, is independent of an uncontrolled diffusive process, representing the demand of a produced good or other factors influencing the company's running profit. As a consequence, in those papers the linear part of the dynamic programming equation for the problem's value function takes the form of an ODE (rather than a PDE), so that in the inaction region the value function depends only parametrically on the variable associated to the purely controlled state. In the Markovian formulation of our problem we have instead a coupling between the components of the state process (see eqs.\ \eqref{Xinter} and \eqref{dynY} below), and this makes our framework fully two-dimensional. However, by exploiting a nontrivial connection to optimal stopping, and performing an almost exclusively probabilistic analysis, we still obtain a complete characterisation of the control problem's value function and of the optimal strategy. To the best of our knowledge, this is a novelty in the literature on singular stochastic control. We also believe that the detailed analysis of the free boundary that we obtain in this paper contributes to the literature on optimal stopping, since examples of solvable two-dimensional optimal stopping problems are quite rare (see \cite{Christensenetal}, Section 3 in \cite{DeAFeFe}, and \cite{PeskirJohnson} for recent contributions).

As we comment on in Section \ref{sec:relatedprobl}, our debt management problem shares a similar mathematical structure with other optimisation problems arising in economic theory/mathematical finance, which might then be tackled through the approach and techniques of this paper. This is the case of problems of capacity expansion \cite{DP} with (a possibly stochastic) depreciation in the capital stock (see \cite{RS}, among others), and problems of optimal consumption under intertemporal preferences \`a la Hindy-Huang-Kreps (cf.\ \cite{BankRiedel1}, \cite{HH} and references therein). 

Our interest in stochastic control methods for public debt management started reading the recent \cite{CadAgui} (see also \cite{CadAguiOR}), which,
to the best of our knowledge, is the only other paper dealing with a mathematical rigorous analysis of the debt reduction problem. In \cite{CadAgui} the debt ratio evolves according to a linearly controlled one-dimensional geometric Brownian motion, and the government aims at minimising the total expected costs arising from having debt and intervening on it. Although the government cost functional we consider in this paper is similar to the one in \cite{CadAgui}, the Markovian formulation of our singular control problem is fully two-dimensional, whereas that of \cite{CadAgui} is one-dimensional. This implies that our optimal debt ceiling is a curve, whereas that of \cite{CadAgui} is a constant.

The rest of the paper is organised as follows. In Section \ref{problem} we set up the model and introduce the control problem. Section \ref{sec:verification-general} is devoted to establishing, in a general not necessarily Markovian setting, the link between the singular stochastic control problem and an auxiliary optimal stopping problem. In Section \ref{Markovsetting} we restrict our attention to a Markovian framework so to obtain a complete characterisation of the optimal debt reduction policy (Subsections \ref{sec:OS} and \ref{sec:OC}). Some economic conclusions and comments on related problems can be found in Section \ref{sec:optimalsol}, whereas Appendix \ref{someproofs} collects some proofs, and Appendix \ref{auxiliaryres} contains some auxiliary results.


\section{The General Model and the Control Problem}
\label{problem}

Within a not necessarily Markovian setting, in this section we provide a general formulation of our model and of the control problem. 

Let $X_t$ be the level of public debt-to-gross domestic product (GDP) ratio at time $t \geq 0$; that is, 
$$
X_t:=\frac{\text{gross public debt at time $t$}}{\text{GDP at time $t$}}.
$$
According to classical macroeconomic theory (see, e.g., \cite{BlanchardFischer}), in any given period the debt stock grows by the existing debt stock multiplied by the difference between real interest rate and GDP growth, less the primary budget balance. By Fisher law \cite{Fisher} the real interest rate is given by the difference of the nominal interest rate and the inflation rate. In reality these variables are all time-dependent, stochastic and interrelated. We therefore assume that they are described by not necessarily Markovian real-valued processes on a given complete filtered probability space $(\Omega, \mathcal{F}, \mathbb{F}, \mathbb{P})$, with filtration $\mathbb{F}:=\{\mathcal{F}_t, t\geq 0\}$ satisfying the usual conditions.  $\mathbb{F}$ is the flow of information available to the government. We denote by $g:=\{g_t, t \geq 0\}$ the real-valued GDP growth rate, by $\delta:=\{\delta_t, t \geq 0\}$ the nonnegative nominal interest rate, and by $Y:=\{Y_t, t\geq 0\}$ the real-valued inflation rate. We take all of them $\mathbb{F}$-progressively measurable and such that the process $\beta$, defined by 
\beq
\label{beta}
\beta_t:=\delta_t - Y_t - g_t, \quad t\geq 0,
\eeq
satisfies $\int_0^t |\beta_u| du < \infty$ a.s.\ for any $t\geq 0$.

The dynamics of $X$ then takes the form
\beq
\label{dynX}
dX_t = (\delta_t - Y_t - g_t)X_t dt - d\nu_t, \quad t \geq 0, \qquad X_{0-}=x>0,
\eeq
where $\nu_t$ is the cumulative primary balance up to time $t$. The primary balance is the variable that the government can control, e.g.\ through fiscal interventions, in order to reduce the debt ratio level. We observe from \eqref{dynX} that the nominal interest rate, the inflation rate and the GDP growth rate directly affect the dynamics of $X$. However, they are not under government control. Inflation rate and nominal interest rate are indeed usually regulated by an autonomous Central Bank, whereas the growth rate typically depends on the business cycles of the economy.

Equation \eqref{dynX} can be explicitly solved yielding for any $t\geq 0$ and $x \in (0,\infty)$
\beq
\label{syst:X}
X^{\nu}_t=e^{\int_0^t \beta_s ds}\bigg[ x - \int_0^t e^{-\int_0^s \beta_u du} d\nu_s\bigg].
\eeq

In \eqref{syst:X} above and in the rest of this section we shall write $X^{\nu}$ to account for the dependence of $X$ on the control policy $\nu$. Moreover, we will write $X^{x,\nu}$ when there will be the need to stress the dependence of $X^{\nu}$ on the initial level $x \in (0,\infty)$ as well. The admissible policies that the government can employ to decrease the level of debt ratio are drawn from the set
\begin{eqnarray}
\label{admissiblecontrols-general}
\mathcal{A} \hspace{-0.2cm}&:=&\hspace{-0.2cm} \{\nu:\Omega \times \mathbb{R}_{+} \mapsto  \mathbb{R}_{+}, ({\nu_{t}(\omega) := \nu(\omega,t)})_{t \geq 0}\mbox{ is nondecreasing,\,\,right-continuous,} \nonumber \\
&& \hspace{1.7cm} \mbox{$\mathbb{F}$-adapted,\,\,such that} \,\,X^{\nu}_t\geq 0\,\,\,\PP-\mbox{a.s.}\,\,\forall \; t \geq 0\}. 
\end{eqnarray}
In the following we set $\nu_{0-}=0$ a.s.\ for any $\nu \in \mathcal{A}$.
Notice that in \eqref{admissiblecontrols-general} we do not allow for policies that let the debt ratio become negative, i.e.\ that make the government a net lender. This is somehow a realistic requirement, as a situation with negative debt is less relevant in real world economies.

The government aims at reducing the level of debt ratio. Having a level of debt ratio $X_t$ at time $t\geq 0$ the government incurs an instantaneous random cost $h(\omega, t, X_t(\omega))$, $\omega \in \Omega$. This may be interpreted as a measure of the resulting losses for the country due to the debt, as, e.g., a tendency to suffer low subsequent growth (see \cite{CMZ11}, \cite{RRR12}, \cite{WK15}, among others, for empirical studies). The randomness of the cost function captures the possible dependency of the country's losses on other exogenous factors, as the overall economic situation. Regarding such instantaneous cost we make the following assumption.
\begin{Assumptions}
\label{ass:h}
$h: \Omega \times \mathbb{R}_+ \times \mathbb{R} \mapsto \mathbb{R}_+$ is such that
\begin{itemize}
\item[(i)] $(\omega,t) \mapsto h(\omega,t,x)$ is $\mathbb{F}$-progressively measurable for any $x\in \mathbb{R}$;
\item[(ii)] $x \mapsto h(\omega,t,x)$ is strictly convex, continuously differentiable, nondecreasing on $\mathbb{R}_+$, and such that $h(\omega,t,0)=0$ for any $(\omega,t) \in \Omega \times \mathbb{R}_+$;
\item[(iii)] $$\mathbb{E}\bigg[\int_0^{\infty} h(\omega,t,X^{0}_t(\omega)) dt\bigg] + \mathbb{E}\bigg[\int_0^{\infty} X^{0}_t(\omega) h_x(\omega,t,X^{0}_t(\omega)) dt\bigg] < \infty.$$
\end{itemize}
\end{Assumptions}
In the previous assumption, and in the following, $X^0$ denotes the solution of \eqref{dynX} when we take $\nu\equiv 0$. For notational simplicity, from now on we will drop the $\omega$-dependence in the cost function and in all the stochastic processes when it is not necessarily needed.

\begin{remark}
\label{rem:h}
Notice that the requirement $h(t,0)=0$ is without loss of generality, since if $h(t,0) = f(t) > 0$ then one can always set $\widehat{h}(t,x):= h(t,x) - f(t)$ and write $h(t,x) = \widehat{h}(t,x) + f(t)$, so that the optimisation problem (cf.\ \eqref{eq:valueOC-general} below) remains unchanged up to an additive constant.
\end{remark}

Whenever the government decides to reduce the level of the debt ratio, it incurs an intervention cost that is proportional to the amount of the debt reduction. Fiscal adjustments as raising taxes or reducing expenses may generate such a cost. The marginal cost of intervention is described by a positive stochastic process $\kappa$ satisfying the following requirements.
\begin{Assumptions}
\label{ass:kappa}
$\kappa: \Omega \times \mathbb{R}_+ \mapsto (0,\infty)$ is $\mathbb{F}$-adapted with continuous paths, and it is such that $\{X^0_t \kappa_t, t \geq 0\}$ is of \emph{class (D)}.\footnote{A process $\xi$ is of \textsl{class (D)} if $\{\xi_\tau,\,\,\tau<\infty\,\,\text{a stopping time}\}$ defines a uniformly integrable family of random variables on $(\Omega,\mathcal{F},\mathbb{P})$.
We refer the reader to Definition 1.6 at p.\ 124 of \cite{RY} for further details.}
\end{Assumptions}

The government's goal is to choose a policy $\nu^* \in \mathcal{A}$ minimising the total expected cost
\beq
\label{eq:J-general}
\mathcal{J}_{x}(\nu):=\mathbb{E}\bigg[\int_0^{\infty} h(t, X^{x,\nu}_t) dt + \int_0^{\infty} \kappa_t d\nu_t\bigg], \quad x \in (0,\infty).
\eeq
For any $\nu \in \mathcal{A}$, $\mathcal{J}_{x}(\nu)$ is well defined but possibly infinite. For $x \in (0,\infty)$ the minimal cost is therefore
\beq
\label{eq:valueOC-general}
V(x):=\inf_{\nu \in \mathcal{A}}\mathcal{J}_{x}(\nu).
\eeq
Notice that $V$ is finite since for any $x>0$ one has $V(x) \leq \mathcal{J}_{x}(0) < \infty$, where the last inequality is due to Assumption \ref{ass:h}-(iii).

Problem \eqref{eq:valueOC-general} takes the form of a singular stochastic control problem, i.e.\ of a problem in which control processes may be singular with respect to the Lebesgue measure, as functions of time (see \cite{Shreve} for an introduction; \cite{Karatzas81} and \cite{KaratzasShreve84} as classical references in Markovian settings; \cite{BK}, and \cite{CFR}, among others, for studies in not necessarily Markovian frameworks).

\begin{remark}
\label{rem:relatedprobl}
\begin{enumerate}\hspace{10cm}
\item Given the finiteness of $V$, there is no loss of generality in restricting the class of admissible controls to those belonging to $\mathcal{A}$ and such that $\mathcal{J}_x(\nu)<\infty$. Then, by strict convexity of $h(t,\cdot)$, and the affine nature of $X^{x,\nu}$ in $\mathcal{\nu}$, it follows that $\mathcal{J}_{x}(\,\cdot\,)$ is strictly convex. Therefore the solution of the government's debt management problem, if it exists, is unique.
\item Problem \eqref{eq:valueOC-general} shares a common mathematical structure with problems of optimal consumption under Hindy-Huang-Kreps (HHK) preferences (cf.\ \cite{BankRiedel1}, \cite{HH} and references therein), and problems of stochastic irreversible investment (see Chapter 11 in \cite{DP} for a review) with (stochastic) depreciation of the capital stock (see \cite{RS}, among others, for a problem of irreversible investment with constant depreciation of the capital stock). More details on these classes of optimisation problems and on their relation to problem \eqref{eq:valueOC-general} can be found in Section \ref{sec:relatedprobl}.
\end{enumerate}
\end{remark}


\section{A Probabilistic Verification Theorem}
\label{sec:verification-general}

In this section we provide a general verification theorem for the singular stochastic control problem \eqref{eq:valueOC-general}. We show that its solution is related to that of an auxiliary optimal stopping problem formulated in terms of the marginal cost of having debt, $h_x$, and the marginal cost of intervention on the debt ratio, $\kappa$. In the optimal stopping problem the government decides \emph{when} to reduce the debt ratio level by one additional unit in order to minimise the associated total expected marginal cost.

Recall \eqref{beta}, for any given $z \in \mathbb{R}$ set
\beq
\label{greekzeta}
\zeta_t(\omega, z):=\int_0^{t} e^{z + \int_0^s \beta_u(\omega) ds}\, h_x(\omega, t, e^{z + \int_0^s \beta_u(\omega) du}) ds + \kappa_{t}(\omega)\, e^{z + \int_0^{t} \beta_u(\omega) du}, \quad (\omega,t)\in \Omega \times \mathbb{R}_+,
\eeq
and introduce the optimal stopping problem 
\beq
\label{valueOS-general}
\widetilde{U}_t(z):=\essinf_{\tau \geq t}\mathbb{E}\big[\zeta_{\tau}(z)\,\big|\,\mathcal{F}_t\big], \qquad t\geq 0,
\eeq
where the optimisation is taken over all $\mathbb{F}$-stopping times $\tau \geq t$. In \eqref{valueOS-general} we use the convention (cf.\ \cite{Shir}, Ch.\ 3)
$$\kappa_{\tau}\, e^{z + \int_0^{\tau} \beta_u du}:=\liminf_{t\uparrow \infty}\kappa_{t}\, e^{z + \int_0^{t} \beta_u du} \quad \text{on}\quad \{\tau = \infty\}.$$

Denoting by $U_t(z)$ a c\`adl\`ag modification of $\widetilde{U}_t(z)$, under Assumptions \ref{ass:h} and \ref{ass:kappa} the stopping time 
\begin{equation}
\label{taustargeneral}
\tau^*(z):=\inf\{t \geq 0: U_t(z) \geq \zeta_t(z)\}, \quad z \in \mathbb{R},
\end{equation}
is optimal for problem \eqref{valueOS-general} by Theorem D.12 in Appendix D of \cite{KS-MF}. Notice that since $h_x(t, \cdot)$ is a.s.\ increasing, then $z \mapsto \tau^*(z)$ is a.s.\ decreasing (see Lemma \ref{inverse} in Appendix \ref{auxiliaryres} for a short proof). Such monotonicity of $\tau^*(\,\cdot\,)$ will be important in the following as we will need to consider the generalised inverse of $\tau^*(\ln(\,\cdot\,))$.

Letting $U(z):=U_0(z)$, for any $x>0$ define
\beq
\label{hatV}
\widehat{V}(x):= \int_{-\infty}^{\ln(x)}U(z)dz.
\eeq
Since we can also write $\widehat{V}(x)=\int_0^x \frac{1}{z}U(\ln(z))dz$, $x>0$, where 
\begin{equation}
\label{def:1suxu-general}
\frac{1}{x}U(\ln(x)) = \inf_{\tau \geq 0}\mathbb{E}\bigg[\int_0^{\tau} e^{\int_0^t \beta_s ds}\, h_x\big(t, x e^{\int_0^t \beta_s ds}\big) dt  + \kappa_{\tau}\, e^{\int_0^\tau \beta_s ds}\bigg],
\end{equation}
for any $x>0$ one has that $0 \leq \widehat{V}(x) \leq \mathbb{E}[\kappa_0]x< \infty$, where the last inequality is due to Assumption \ref{ass:kappa}.

Moreover, for any $x>0$ set $\sigma^*(x):= \tau^*(\ln(x))$, and introduce the nondecreasing, right-continuous process
\beq
\label{candidate-general}
\overline{\nu}^*_t := \sup\{\alpha \in [0,x]: \sigma^*((x-\alpha)\,+) \leq t\}, \quad t \geq 0, \qquad \overline{\nu}^*_{0-} =0,
\eeq
and then also the process
\beq  
\label{nustar-general}
\nu^*_t:=\int_{0}^t e^{\int_0^s \beta_u du} d\overline{\nu}^*_s, \quad t > 0, \qquad \nu^*_{0-}=0. 
\eeq
\noindent Notice that $\overline{\nu}^*_{\cdot}$ is the right-continuous inverse of $\sigma^*(\,\cdot\,)$.

The proof of the next theorem is fully probabilistic, and it relies on a change of variable formula already used in the context of singular control problems (see, e.g., \cite{BK}, \cite{DeAFeFe}, and \cite{KaratzasElKarouiSkorohod}).

\begin{theorem}
\label{teo:ver}
Let $\widehat{V}$ be as in \eqref{hatV} and $V$ as in \eqref{eq:valueOC-general}. Then one has $\widehat{V} = V$, and $\nu^*$ is the (unique) optimal control for problem \eqref{eq:valueOC-general}.
\end{theorem}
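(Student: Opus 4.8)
The plan is to establish the two claims — $\widehat V = V$ and optimality of $\nu^*$ — simultaneously via the standard pair of inequalities, the whole argument resting on a single change-of-variables identity that converts the singular control cost functional into an integral (over the "level" variable $\alpha \in (0,x]$) of the optimal stopping value $\frac1z U(\ln z)$. The key preliminary observation is the representation $\widehat V(x) = \int_0^x \frac1z U(\ln z)\,dz$ together with formula \eqref{def:1suxu-general}, which exhibits $\frac1z U(\ln z)$ as the value of a one-dimensional optimal stopping problem in the discounted/weighted marginal quantities $e^{\int_0^t \beta_s ds} h_x$ and $\kappa_\tau e^{\int_0^\tau \beta_s ds}$. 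I would first record (or invoke, from the cited change-of-variable results of \cite{BK}, \cite{DeAFeFe}, \cite{KaratzasElKarouiSkorohod}) that for any admissible $\nu \in \mathcal A$ the running cost decomposes as
\beq
\mathcal J_x(\nu) = \mathbb E\bigg[\int_0^\infty h(t,X^{x,\nu}_t)\,dt + \int_0^\infty \kappa_t\,d\nu_t\bigg]
= \widehat V(x) + \mathbb E\bigg[\int_{(0,x]} \Big(\text{(marginal cost along level $\alpha$ using the stopping rule induced by $\nu$)} - \tfrac1\alpha U(\ln\alpha)\Big)\,d\alpha\bigg],
\eeq
where the reduction from $X^{x,\nu}$ to the level-by-level picture uses the explicit solution \eqref{syst:X} and Fubini together with convexity of $h$ via the fundamental theorem of calculus $h(t,X_t)=\int_0^{X_t} h_x(t,r)\,dr$.

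The lower bound $\mathcal J_x(\nu)\ge \widehat V(x)$ is then the easy half: for each fixed level $\alpha$, the "marginal stopping rule" extracted from $\nu$ (roughly, the first time the controlled debt drops below $\alpha$, pulled back through the deterministic exponential factor) is an admissible $\mathbb F$-stopping time, so by the very definition \eqref{def:1suxu-general} of $\frac1\alpha U(\ln\alpha)$ as an infimum over stopping times, the bracketed integrand is $\ge 0$ pathwise (after taking conditional expectations appropriately), hence $\mathcal J_x(\nu)\ge\widehat V(x)$ and therefore $V(x)\ge\widehat V(x)$. For the reverse inequality I would verify that the candidate $\nu^*$ of \eqref{nustar-general}–\eqref{candidate-general}, whose "un-exponentiated" version $\overline\nu^*$ is precisely the right-continuous inverse of $\sigma^*(\cdot)=\tau^*(\ln(\cdot))$, realizes equality: the monotonicity of $z\mapsto\tau^*(z)$ (Lemma \ref{inverse}) guarantees $\overline\nu^*$ is well-defined, nondecreasing and right-continuous; one checks $X^{x,\nu^*}\ge 0$ so that $\nu^*\in\mathcal A$; and then, because the level-$\alpha$ stopping rule induced by $\nu^*$ is exactly the optimal stopping time $\tau^*(\ln\alpha)$, the bracketed integrand vanishes identically, giving $\mathcal J_x(\nu^*)=\widehat V(x)$. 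Combining the two bounds yields $V=\widehat V$ and the optimality of $\nu^*$; uniqueness is already granted by strict convexity of $\mathcal J_x(\cdot)$ (Remark \ref{rem:relatedprobl}(1)).

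The main obstacle — and the part requiring genuine care rather than routine computation — is the rigorous bookkeeping in the change-of-variables step: one must justify the Fubini interchange (using Assumption \ref{ass:h}(iii) for integrability of both $h$ and $x\,h_x$ along $X^0$, and Assumption \ref{ass:kappa} together with the class (D) property to control the $\kappa\,d\nu$ term and to handle the $\{\tau=\infty\}$ boundary term via the $\liminf$ convention), and one must correctly identify the stopping time that level $\alpha$ "sees" under a general singular $\nu$, i.e.\ relate the inverse of $t\mapsto \overline\nu_t$ (the de-exponentiated cumulative control) to hitting times of the debt process and match it with the inverse relation \eqref{candidate-general} for $\nu^*$. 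The interplay between the two time/level changes — the deterministic exponential rescaling linking $\nu$ and $\overline\nu$ in \eqref{nustar-general}, and the generalized-inverse duality between a nondecreasing process and a decreasing family of stopping times — is where all the subtlety lies; once that correspondence is pinned down, both inequalities follow immediately from the definition of $U$ as an essential infimum and the optimality of $\tau^*(z)$ for \eqref{valueOS-general}.
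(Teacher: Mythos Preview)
Your proposal is correct and follows essentially the same route as the paper's proof: the paper also reduces the cost functional level-by-level via the change-of-variable formula (Revuz--Yor, Ch.~0, Prop.~4.9), defining for each admissible $\nu$ the de-exponentiated control $\overline\nu_t=\int_0^t e^{-\int_0^s\beta_u du}\,d\nu_s$ and its generalised inverse $\tau^{\overline\nu}(q)=\inf\{t\ge0:\ x-\overline\nu_t<e^q\}$, then shows $\widehat V(x)=\int_{-\infty}^{\ln x}U(q)\,dq\le\int_{-\infty}^{\ln x}\mathbb E[\zeta_{\tau^{\overline\nu}(q)}(q)]\,dq=\mathcal J_x(\nu)$ by the very computation you sketch (Fubini, $h(t,X_t)=\int_0^{X_t}h_x(t,r)\,dr$, and the Stieltjes change of variable for the $\kappa\,d\nu$ term), and finally checks that for $\nu=\nu^*$ one has $\tau^{\overline\nu^*}_+(q)=\tau^*(q)$ a.s., turning the inequality into an equality. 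Your identification of the two subtle points---the Fubini justification under Assumptions \ref{ass:h}(iii) and \ref{ass:kappa}, and the inverse duality between $\overline\nu^*$ and $\sigma^*$---is exactly where the paper's work lies as well.
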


\begin{proof}
\emph{Step 1.} Let $x > 0$ be given and fixed. For $\nu \in \mathcal{A}$, introduce the process $\overline{\nu}$ such that
$\overline{\nu}_t:= \int_{0}^t e^{-\int_0^s \beta_u du} d{\nu}_s$, $t\geq 0$, and define its inverse (see, e.g., Chapter 0, Section 4 of \cite{RY}) by
\beq
\label{taunu}
\tau^{\overline{\nu}}(q):=\inf\{t\geq 0 \ | \ x - \overline{\nu}_t < e^q\}, \qquad q \leq \ln(x).
\eeq
The process $\tau^{\overline{\nu}}(q):=\{\tau^{\overline{\nu}}(q),\  q \leq \ln(x)\}$ has decreasing, left-continuous sample paths and hence it admits right-limits
\beq
\label{taunumeno}
\tau^{\overline{\nu}}_{+}(q):=\inf\{t\geq 0 \ | \ x - \overline{\nu}_t \leq e^q\}, \qquad q \leq \ln(x).
\eeq
Moreover, the set of points $q\in\mathbb{R}$ at which $\tau^{\overline{\nu}}(q)(\omega) \neq \tau^{\overline{\nu}}_{+}(q)(\omega)$ is a.s.\ countable for a.e.~$\omega\in\Omega$.

Since $\overline{\nu}$ is right-continuous and $\tau^{\overline{\nu}}(q)$ is the first entry time of an open set, it is an $(\mathcal{F}_{t+})$-stopping time for any given and fixed $q \leq \ln(x)$. However, $(\mathcal{F}_t)_{t\ge0}$ is right-continuous, hence $\tau^{\overline{\nu}}(q)$ is an $(\mathcal{F}_{t})$-stopping time. Moreover, $\tau^{\overline{\nu}}_{+}(q)$ is the first entry time of the right-continuous process $\overline{\nu}$ into a closed set and hence it is an $(\mathcal{F}_{t})$-stopping time as well for any $q \leq \ln(x)$. 

With regard to \eqref{hatV} we can then write
\begin{align}
\label{verifico-1}
& \widehat{V}(x) = \int_{-\infty}^{\ln(x)}U(q)dq \leq \int_{-\infty}^{\ln(x)} \mathbb{E}\bigg[\int_0^{\tau^{\overline{\nu}}(q)} e^{q + \int_0^t \beta_s ds}\, h_x(t, e^{q + \int_0^t \beta_s ds}) dt \nonumber \\
& \hspace{1.5cm} + \kappa_{\tau^{\overline{\nu}}(q)}\, e^{q + \int_0^{\tau^{\overline{\nu}}(q)} \beta_s ds}\bigg]\,dq,
\end{align}
and we now consider the two terms in the last integral above separately. 

We have
\begin{align}
\label{verifico-2}
& \int_{-\infty}^{\ln(x)} \mathbb{E}\Big[\kappa_{\tau^{\overline{\nu}}(q)}\, e^{ q + \int_0^{\tau^{\overline{\nu}}(q)} \beta_s ds}\Big]\,dq = \int_{0}^{x} \mathbb{E}\Big[\kappa_{\tau^{\overline{\nu}}(\ln(\xi))}\, e^{\int_0^{\tau^{\overline{\nu}}(\ln(\xi))} \beta_s ds}\Big]\,d\xi \nonumber \\
& = \mathbb{E}\bigg[\int_0^{\infty} \kappa_t\, e^{\int_0^{t} \beta_s ds}\,d\overline{\nu}_t\Big] = \mathbb{E}\bigg[\int_0^{\infty} \kappa_t\,d{\nu}_t\bigg], 
\end{align}
where the first step is due to the change of variable $\xi=e^q$, and the second step follows from Tonelli's theorem and the change of variable formula in Chapter 0, Proposition 4.9 of \cite{RY}, upon observing that $\tau^{\overline{\nu}}(\ln(\xi))=\inf\{t\geq 0 \ | \overline{\nu}_t > x - \xi\}$.

On the other hand, notice that for any $t\geq 0$ one has $t < \tau^{\overline{\nu}}(q)$ if and only if $\overline{\nu}_t < x - e^q$. Therefore, by applying Tonelli's theorem we have 
\begin{align}
\label{verifico-3}
& \int_{-\infty}^{\ln(x)} \mathbb{E}\bigg[\int_0^{\tau^{\overline{\nu}}(q)} e^{q + \int_0^t \beta_s ds}\, h_x(t, e^{q + \int_0^t \beta_s ds})dt \bigg] dq \nonumber \\
& = \mathbb{E}\bigg[\int_0^{\infty} \Big(\int_{-\infty}^{\ln(x)} e^{q + \int_0^t \beta_s ds}\, h_x(t, e^{q + \int_0^t \beta_s ds})\mathds{1}_{\{t < \tau^{\overline{\nu}}(q)\}} dq\Big) dt\bigg] \nonumber \\
& = \mathbb{E}\bigg[\int_0^{\infty} \Big(\int_{-\infty}^{\ln(x)} e^{q + \int_0^t \beta_s ds}\, h_x(t, e^{q + \int_0^t \beta_s ds}) \,\mathds{1}_{\{e^q < x - \overline{\nu}_t\}}\,dq\Big) dt\bigg] \\
& = \mathbb{E}\bigg[\int_0^{\infty} \Big(\int_0^{xe^{\int_0^t \beta_u du}} h_x(t,y) \,\mathds{1}_{\{y < e^{\int_0^t \beta_u du}\,(x - \overline{\nu}_t)\}}\,dy \Big) dt\bigg] = \mathbb{E}\bigg[\int_0^{\infty} h(t, X^{x,\nu}_t) dt\bigg], \nonumber 
\end{align}
where the last equality follows from \eqref{syst:X} and the fact that $h(t,0)=0$.

Combining \eqref{verifico-2} and \eqref{verifico-3} we thus have from \eqref{verifico-1} that $\widehat{V}(x) \leq \mathcal{J}_{x}(\nu)$. Hence, since $\nu$ was arbitrary
\beq
\label{ineq1}
\widehat{V}(x) \leq V(x), \quad x>0.
\eeq

\emph{Step 2.} To complete the proof we need to show the reverse inequality. To this end we preliminary notice that $\nu^* \in \mathcal{A}$ since it is nondecreasing, right-continuous and such that $X^{\nu^*}_t \geq 0$ a.s.\ for all $t\geq 0$. The latter property follows from \eqref{syst:X} upon observing that, for any $x>0$ and $t\geq 0$, one has by definition $\overline{\nu}^*_t \leq x$ a.s.

Also, the process $\nu^*$ is such that for any $t\geq 0$ and $q \leq \ln(x)$, $x>0$, we can write (cf.\ \eqref{candidate-general} and \eqref{taunumeno})
$$\tau^{\overline{\nu}^*}_+(q) \leq t \,\, \Longleftrightarrow \,\, \overline{\nu}^*_t \geq x - e^q\,\, \Longleftrightarrow \,\, \sigma^*(e^q) \leq t \,\, \Longleftrightarrow \,\, \tau^*(q) \leq t.$$

Then recalling that $\tau^{\overline{\nu}^{*}}_{+}(q)=\tau^{\overline{\nu}^{*}}(q)$ $\mathbb{P}$-a.s.~and for a.e.~$q\le \ln(x)$, we pick $\nu=\nu^*$ (equivalently, $\overline{\nu}=\overline{\nu}^*$), \eqref{verifico-1} becomes an equality, and \eqref{verifico-2} and \eqref{verifico-3} yield $\widehat{V}(x)=\mathcal{J}_{x}(\nu^*)$. That is, $\widehat{V}=V$ by \eqref{ineq1} and admissibility of $\nu^*$. Therefore $\nu^*$ is optimal (it is in fact the unique optimal control in the class of controls belonging to $\mathcal{A}$ and such that $\mathcal{J}_x(\nu) < \infty$ by strict convexity of $\mathcal{J}_x(\,\cdot\,)$).
\end{proof}

A direct byproduct of Theorem \ref{teo:ver} is the following.
\begin{corollary}
\label{cor:Vzv}
The identity $V_x(x)=\frac{1}{x}U(\ln(x))$ holds true for any $x>0$.
\end{corollary}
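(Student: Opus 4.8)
The plan is to read this corollary simply as the statement that the integral representation of $\widehat{V}$ can be differentiated. By Theorem~\ref{teo:ver} we have $V=\widehat{V}$, and from \eqref{hatV} together with the rewriting recalled just before \eqref{def:1suxu-general},
\[
V(x)=\widehat{V}(x)=\int_0^x \phi(z)\,dz,\qquad \phi(z):=\frac{1}{z}U(\ln z),
\]
where by \eqref{def:1suxu-general} one has $\phi(z)=\inf_{\tau\ge 0}\mathbb{E}\big[g^z(\tau)\big]$ with
\[
g^z(\tau):=\int_0^{\tau} e^{\int_0^t \beta_s ds}\,h_x\big(t,z e^{\int_0^t \beta_s ds}\big)\,dt+\kappa_\tau\, e^{\int_0^\tau \beta_s ds}.
\]
Hence it is enough to prove that $\phi$ is continuous on $(0,\infty)$: the fundamental theorem of calculus then gives $\widehat{V}\in C^1((0,\infty))$ with $\widehat{V}_x=\phi$, i.e.\ $V_x(x)=\tfrac1x U(\ln x)$ for every $x>0$. (One could also argue that $V=\widehat{V}$, being convex, is differentiable off a countable set and that $\phi$ is nondecreasing, so that the one-sided derivatives of $\widehat{V}$ equal the one-sided limits of $\phi$; but this again reduces everything to the continuity of $\phi$.)

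To obtain that continuity I would establish the modulus-of-continuity estimate $|\phi(x)-\phi(x')|\le R(x,x')$, where
\[
R(x,x'):=\mathbb{E}\bigg[\int_0^{\infty} e^{\int_0^t \beta_s ds}\,\big|\,h_x\big(t,x e^{\int_0^t \beta_s ds}\big)-h_x\big(t,x' e^{\int_0^t \beta_s ds}\big)\,\big|\,dt\bigg].
\]
The key observation is that the intervention term $\kappa_\tau e^{\int_0^\tau \beta_s ds}$ appearing in $g^z(\tau)$ does not depend on $z$, so for any fixed stopping time $\tau$ the difference $\mathbb{E}[g^x(\tau)]-\mathbb{E}[g^{x'}(\tau)]$ equals $\mathbb{E}\big[\int_0^\tau e^{\int_0^t\beta_s ds}\big(h_x(t,xe^{\int_0^t\beta_s ds})-h_x(t,x'e^{\int_0^t\beta_s ds})\big)\,dt\big]$, whose absolute value is at most $R(x,x')$ since the integrand has constant sign (for fixed $x,x'$) and $R$ integrates over all of $[0,\infty)$. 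Taking the infimum over $\tau$ in both orders yields $\phi(x)\le \phi(x')+R(x,x')$ and $\phi(x')\le \phi(x)+R(x,x')$, i.e.\ the claimed bound.

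It then remains to check that $R(x,x')\to 0$ as $x'\to x$, which follows from the continuity of $z\mapsto h_x(t,z)$ (Assumption~\ref{ass:h}-(ii)) and dominated convergence: for $x'$ in a bounded neighbourhood $[a,b]\subset(0,\infty)$ of $x$, monotonicity of $h_x(t,\cdot)$ on $\mathbb{R}_+$ and $h_x\ge 0$ there give $|h_x(t,xe^{\int_0^t\beta_s ds})-h_x(t,x'e^{\int_0^t\beta_s ds})|\le h_x(t,be^{\int_0^t\beta_s ds})$, so the integrand is dominated by $e^{\int_0^t\beta_s ds}h_x(t,be^{\int_0^t\beta_s ds})=\tfrac1b\,\overline{X}^0_t\,h_x(t,\overline{X}^0_t)$, with $\overline{X}^0$ the solution of \eqref{dynX} for $\nu\equiv 0$ started at $b$; this is integrable by Assumption~\ref{ass:h}-(iii). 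The step I expect to require the most care is precisely this domination — invoking Assumption~\ref{ass:h}-(iii) at an initial datum $b$ possibly different from the fixed $x$ (which is harmless, since $X^0$ is linear in the initial level and $\beta$ in \eqref{beta} is fixed), together with the routine but needed check that the $\essinf$ at $t=0$ in \eqref{valueOS-general} indeed collapses to the deterministic-index infimum recorded in \eqref{def:1suxu-general}. Everything else is a standard application of the fundamental theorem of calculus.
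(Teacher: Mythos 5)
Your proof is correct and coincides with the argument the paper leaves implicit when it calls the corollary a direct byproduct of Theorem~\ref{teo:ver}: once one knows $V=\widehat V$ and $\widehat V(x)=\int_0^x\phi(z)\,dz$ with $\phi(z)=\tfrac1z U(\ln z)$, the identity is the fundamental theorem of calculus, and the only point the paper does not spell out is the continuity of $\phi$. Your modulus-of-continuity bound $|\phi(x)-\phi(x')|\le R(x,x')$ (using that the intervention term in $g^z(\tau)$ is $z$-independent and that the remaining integrand has a definite sign, so that $\int_0^\tau\le\int_0^\infty$), followed by dominated convergence with the majorant $\tfrac1b\,\overline X^0_t h_x(t,\overline X^0_t)$ and Assumption~\ref{ass:h}-(iii), supplies exactly that missing step; the caveat you flag, that Assumption~\ref{ass:h}-(iii) must be read as holding for each initial datum, is consistent with how the paper already uses it to conclude $V(x)\le\mathcal J_x(0)<\infty$ for \emph{every} $x>0$.
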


This result is consistent with the fact that problems of singular stochastic control with performance criterion which is either convex or concave with respect to the control variable are related to questions of optimal stopping. In particular, the derivative of the control problem's value function in the direction of the controlled state variable equals the value of an optimal stopping problem. We refer to \cite{BK} and \cite{KaratzasShreve84}, among others, as classical references.


\section{Characterisation of the Optimal Policy in a Markovian Setting}
\label{Markovsetting}

\subsection{Embedding the Control Problem in a Markovian Framework}

Theorem \ref{teo:ver} shows that in order to solve control problem \eqref{eq:valueOC-general} it suffices to solve optimal stopping problem \eqref{valueOS-general}. Indeed the right-continuous inverse of (an appropriate transformation of) the optimal stopping time of \eqref{valueOS-general} gives the optimal control for \eqref{eq:valueOC-general}. Moreover, a suitable integral of the value of the optimal stopping problem \eqref{valueOS-general} gives the value of the original singular control problem \eqref{eq:valueOC-general}.

In order to obtain a complete characterisation of the optimal debt reduction policy, we now restrict our attention to a Markovian framework, and we reduce the dimensionality of the problem. The following setting will be kept in the rest of the paper.

We take $(\Omega, \mathcal{F}, \mathbb{P})$ rich enough to accommodate a one-dimensional Brownian motion $W:=\{W_t,\,t \geq 0\}$, and we let $\mathbb{F}:=\{\mathcal{F}_t,\,t\geq 0\}$ be the Brownian filtration, as usual augmented by $\mathbb{P}$-null sets of $\mathcal{F}$. In this paper we focus on the role of the dynamic inflation rate in the public debt management problem, and in \eqref{dynX} we therefore keep the nominal interest rate and the GDP growth rate constant (see also Remark \ref{rem:dimensionality} below); that is $\delta_t \equiv \delta \geq 0$ and $g_t \equiv g \in \mathbb{R}$, so that (cf.\ \eqref{beta}) $\beta_t = \delta - g - Y_t$, $t\geq 0$, and
\beq
\label{Xinter}
dX^{\nu}_t = (\delta - g - Y_t)X^{\nu}_t dt - d\nu_t, \quad t \geq 0, \qquad X^{\nu}_{0-} = x>0.
\eeq
Supported by empirical evidence (see, e.g., \cite{LeeWu} and references therein), we model the inflation rate $Y$ as a stationary and mean-reverting process. In particular, it evolves as an Ornstein-Uhlenbeck process
\beq
\label{dynY}
dY_t = (a - \theta Y_t) dt + \sigma dW_t, \qquad Y_0=y \in \mathbb{R},
\eeq
where $a/\theta \in \mathbb{R}$ is the equilibrium level, $\sigma>0$ the volatility, and the parameter $\theta>0$ is the speed at which $Y$ asymptotically converges in average towards its equilibrium.

In the rest of this paper we will often write $(X^{x,y,\nu},Y^{y})$ to account for the dependence of $(X,Y)$ (cf.\ \eqref{Xinter} and \eqref{dynY}) on the initial levels $(x,y) \in (0,\infty) \times \mathbb{R}$ and on the control policy $\nu$. Also, we will stress the dependence of the set of admissible debt reduction policies \eqref{admissiblecontrols-general} on the initial levels $(x,y) \in (0,\infty) \times \mathbb{R}$ , and we will denote it by $\mathcal{A}(x,y)$. Finally, we shall also denote by $\mathbb{E}_{(x,y)}$ the expectation under the measure on $(\Omega,\mathcal{F})$ $\mathbb{P}_{(x,y)}(\,\cdot\,) = \mathbb{P}(\,\cdot\,|X^{\nu}_0=x,Y_0=y)$, and equivalently use the notation $\mathbb{E}[f(X^{x,y,\nu}_t,Y^y_t)] = \mathbb{E}_{(x,y)}[f(X^{\nu}_t,Y_t)]$ for any Borel-measurable function for which the expectation is well defined.

Within this setting the solution to \eqref{Xinter} and \eqref{dynY} is for any $t\geq 0$, $(x,y) \in (0,\infty) \times \mathbb{R}$ and $\nu \in \mathcal{A}(x,y)$
\begin{align}
\label{syst:XY}
\left\{
\begin{array}{lr}
\displaystyle X^{x,y,\nu}_t=e^{(\delta-g)t - \int_0^t Y^y_s ds}\Big[ x - \int_0^t e^{-(\delta-g)s + \int_0^s Y^y_u du} d\nu_s\Big], \\[+6pt]
\displaystyle Y^y_t=ye^{-\theta t} + \frac{a}{\theta}(1 - e^{-\theta t}) + \sigma e^{-\theta t} \int_0^t e^{\theta s} dW_s.
\end{array}
\right.
\end{align}

\begin{remark}
\label{rem:dimensionality}
It might be interesting to introduce a stochastic dynamics for the nominal interest rate and for the GDP growth rate. For example, one might let $\delta$ follow a diffusive mean-reverting dynamics correlated with the inflation rate, and $g$ be described by a Markov regime switching model \`a la Hamilton \cite{Hamilton}, so to capture business cycles in the GDP dynamics. This would lead to a more intricate optimisation problem with three-dimensional state space and regime switching, and the mathematical analysis of the problem will become much more challenging. We leave such interesting extension for future research.
\end{remark}

For suitable $\rho>0$, we set
\beq
\label{Markov-set}
h(\omega, t, x):= e^{-\rho t}C(x), \quad \text{and} \quad \kappa_t(\omega):= \kappa e^{-\rho t}, \qquad (\omega,t,x) \in \Omega \times \mathbb{R}_+ \times \mathbb{R},
\eeq
where $\kappa>0$, and the instantaneous cost function $C:\mathbb{R} \mapsto \mathbb{R}_+$ satisfies the following assumption.
\begin{Assumptions}
\label{ass:c}
\begin{itemize}
\item[(i)] $x \mapsto C(x)$ is strictly convex, continuously differentiable, and such that it is nondecreasing on $\mathbb{R}_+$ and $C(0)=0$; 
\item[(ii)] there exists $\gamma > 1$, $0 < K_1 < K$ and $K_2>0$ such that $$K_1|x^+|^{\gamma} - K \leq C(x) \leq K(1 + |x|^{\gamma})\quad \text{and}\quad |C'(x)| \leq K_2(1 + |x|^{\gamma-1}),$$
\end{itemize}
\end{Assumptions}
\noindent where $(\,\cdot\,)^+$ denotes the positive part of a real number. A quadratic loss function $C(x) = \frac{1}{2}x^2$ clearly satisfies Assumption \ref{ass:c}.

In the rest of this paper we make the standing assumption that the government's discount factor $\rho$ is sufficiently large.
\begin{Assumptions}
\label{ass:rho}
Let $\gamma$ as in Assumption \ref{ass:c}-\emph{(ii)} and $\rho_o > 4\Big[\delta - g - \frac{a}{\theta} + \frac{2 \sigma^2}{\theta^2}\Big] \vee 0$. Then 
$$\rho > \rho_o \vee \gamma\Big[\delta - g - \frac{a}{\theta} + \frac{\gamma \sigma^2}{2\theta^2}\Big] \vee 2(\gamma-1)\Big[\delta - g - \frac{a}{\theta} + \frac{(\gamma-1) \sigma^2}{\theta^2}\Big].$$
\end{Assumptions}
\begin{remark}
\label{rem:rho}
Assumption \ref{ass:rho} is reasonable in light of the fact that usually governments run only for a finite number of years and are therefore more concerned about present than future. Mathematically, Assumption \ref{ass:rho} takes care of the infinite time-horizon of our problem, and it ensures in particular that both the cost and the marginal cost associated to the admissible policy ``never intervene on the debt ratio" are finite.
\end{remark}

The proof of the next proposition can be found in Appendix \ref{someproofs}.
\begin{proposition}
\label{prop:verificoipotesi}
Under Assumptions \ref{ass:c} and \ref{ass:rho}, let $\kappa_t$ and $h$ be as in \eqref{Markov-set}. Then they satisfy Assumptions \ref{ass:h} and \ref{ass:kappa}, respectively.
\end{proposition}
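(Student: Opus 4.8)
The plan is to verify, one at a time, the two requirements of Assumption~\ref{ass:h} and the two of Assumption~\ref{ass:kappa} for the specific choice \eqref{Markov-set}, the pointwise-in-$(\omega,t)$ parts being immediate and the two integrability requirements --- Assumption~\ref{ass:h}(iii) and the class~(D) property in Assumption~\ref{ass:kappa} --- resting on Gaussian moment bounds for the uncontrolled process $X^0$. Since $h(\omega,t,x)=e^{-\rho t}C(x)$ is, for fixed $x$, a deterministic continuous function of $t$, it is $\mathbb{F}$-progressively measurable, which gives Assumption~\ref{ass:h}(i); and for fixed $(\omega,t)$ the map $x\mapsto e^{-\rho t}C(x)$ inherits from Assumption~\ref{ass:c}(i) strict convexity, continuous differentiability, monotonicity on $\mathbb{R}_+$, and the normalisation $h(\omega,t,0)=e^{-\rho t}C(0)=0$, which is Assumption~\ref{ass:h}(ii). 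Similarly, $\kappa_t(\omega)=\kappa e^{-\rho t}$ is strictly positive, has deterministic (hence $\mathbb{F}$-adapted) continuous paths, so that the only non-trivial point left for Assumption~\ref{ass:kappa} is that $\{X^0_t\kappa_t\}_{t\ge0}$ be of class~(D).

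For the moment bounds, recall from \eqref{syst:XY} (with $\nu\equiv0$) that $X^0_t=x\,e^{(\delta-g)t-\int_0^t Y^y_s\,ds}$, and that integrating \eqref{dynY} gives the identity $\int_0^t Y^y_s\,ds=\tfrac1\theta\big(at+\sigma W_t+y-Y^y_t\big)$. Hence $\int_0^t Y^y_s\,ds$ is Gaussian and, using that $Y^y_t$ is Gaussian with mean and variance bounded uniformly in $t$, it has mean $\tfrac a\theta t+O(1)$ and variance $\tfrac{\sigma^2}{\theta^2}\int_0^t(1-e^{-\theta u})^2\,du\le\tfrac{\sigma^2}{\theta^2}t$. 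By the Laplace transform of a Gaussian law there is, for every $q>0$, a constant $c_q$ independent of $t$ with $\mathbb{E}\big[(X^0_t)^q\big]=x^q e^{q(\delta-g)t}\,\mathbb{E}\big[e^{-q\int_0^t Y^y_s\,ds}\big]\le c_q\exp\!\big(q(\delta-g-\tfrac a\theta+\tfrac{q\sigma^2}{2\theta^2})t\big)$. Using the polynomial growth bounds in Assumption~\ref{ass:c}(ii), $C(X^0_t)\le K(1+(X^0_t)^\gamma)$ and $X^0_t\,C'(X^0_t)\le K_2(X^0_t+(X^0_t)^\gamma)\le K_2(1+2(X^0_t)^\gamma)$ (here $X^0_t\ge0$ and $\xi\le1+\xi^\gamma$ for $\xi\ge0$, $\gamma\ge1$); hence, by Tonelli and the above bound with $q=\gamma$, both $\mathbb{E}[\int_0^\infty e^{-\rho t}C(X^0_t)\,dt]$ and $\mathbb{E}[\int_0^\infty e^{-\rho t}X^0_t\,C'(X^0_t)\,dt]$ are finite whenever $\rho>0$ and $\rho>\gamma(\delta-g-\tfrac a\theta+\tfrac{\gamma\sigma^2}{2\theta^2})$, both of which are ensured by Assumption~\ref{ass:rho}. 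Since $h(\omega,t,X^0_t)=e^{-\rho t}C(X^0_t)$ and $h_x(\omega,t,X^0_t)=e^{-\rho t}C'(X^0_t)$, this is precisely Assumption~\ref{ass:h}(iii).

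There remains the class~(D) property, which is the substantive part. Since a family of random variables dominated in absolute value by a single integrable random variable is uniformly integrable, it suffices to prove $\mathbb{E}\big[\sup_{t\ge0}e^{-\rho t}X^0_t\big]<\infty$, for then $X^0_t\kappa_t=\kappa e^{-\rho t}X^0_t$ is bounded, for every $t$, by the integrable random variable $\kappa\sup_{s\ge0}e^{-\rho s}X^0_s$, whence $\{X^0_\tau\kappa_\tau:\tau<\infty\}$ is dominated by the latter and thus uniformly integrable. By the identity above, $e^{-\rho t}X^0_t=x e^{-y/\theta}e^{(\delta-g-\frac a\theta-\rho)t}e^{-\frac\sigma\theta W_t}e^{\frac1\theta Y^y_t}$, and I would estimate the supremum by splitting $[0,\infty)=\bigcup_{n\ge0}[n,n+1]$: on $[n,n+1]$ one bounds the deterministic factor by $e^{|\delta-g-\frac a\theta-\rho|}e^{(\delta-g-\frac a\theta-\rho)n}$, writes $e^{-\frac\sigma\theta W_t}=e^{-\frac\sigma\theta W_n}e^{-\frac\sigma\theta(W_t-W_n)}$, and uses the Ornstein--Uhlenbeck flow over $[n,n+1]$ together with $e^{-\theta(t-n)}\le1$ to get $\sup_{t\in[n,n+1]}Y^y_t\le|Y^y_n|+\tfrac{|a|}\theta+\sigma\sup_{t\in[n,n+1]}\big|\int_n^t e^{\theta(s-n)}\,dW_s\big|$. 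Taking expectations and using that $W_n$ and $Y^y_n$ are $\mathcal{F}_n$-measurable while all Brownian increments over $[n,n+1]$ are independent of $\mathcal{F}_n$, together with Doob's maximal inequality for the exponential (sub)martingale $e^{-\frac{2\sigma}\theta(W_t-W_n)}$ and the uniform-in-$n$ exponential moments of $\sup_{t\in[n,n+1]}|\int_n^t e^{\theta(s-n)}\,dW_s|$, everything reduces to $\mathbb{E}\big[e^{-\frac\sigma\theta W_n\pm\frac1\theta Y^y_n}\big]$, which by the Gaussian Laplace transform and the boundedness of $\Var(Y^y_n)$ and $\mathrm{Cov}(W_n,Y^y_n)$ is $O\big(e^{\frac{\sigma^2}{2\theta^2}n}\big)$. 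This yields $\mathbb{E}\big[\sup_{t\in[n,n+1]}e^{-\rho t}X^0_t\big]\le C\,e^{-(\rho-\delta+g+\frac a\theta-\frac{\sigma^2}{2\theta^2})n}$ with $C$ independent of $n$, so summing over $n$ gives $\mathbb{E}\big[\sup_{t\ge0}e^{-\rho t}X^0_t\big]<\infty$ as soon as $\rho>\delta-g-\tfrac a\theta+\tfrac{\sigma^2}{2\theta^2}$.

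This last inequality is guaranteed by $\rho>\rho_o$ in Assumption~\ref{ass:rho}: when $\delta-g-\tfrac a\theta+\tfrac{\sigma^2}{2\theta^2}\le0$ it is trivial since $\rho_o>0$, and otherwise a short case distinction on the sign of $\delta-g-\tfrac a\theta$ shows that $4[\delta-g-\tfrac a\theta+\tfrac{2\sigma^2}{\theta^2}]>0$ and dominates $\delta-g-\tfrac a\theta+\tfrac{\sigma^2}{2\theta^2}$, completing the verification. I expect the only genuine obstacle to be the class~(D) estimate of the third paragraph: because $e^{-\int_0^t Y^y_s\,ds}$ has to be controlled uniformly in time even though the Ornstein--Uhlenbeck paths are unbounded, one must combine a dyadic-in-time decomposition with Doob-type and Gaussian estimates, and it is exactly there --- and only there --- that the generous threshold $\rho_o$ of Assumption~\ref{ass:rho}, with its factor $4$ and its term $\tfrac{2\sigma^2}{\theta^2}$, is used to absorb the slack of these (deliberately non-sharp) inequalities.
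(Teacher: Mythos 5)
Your proof is correct, and for the verification of Assumption~\ref{ass:h} it matches the paper closely: parts (i)--(ii) are immediate, and part (iii) follows from the polynomial growth of $C,C'$, Tonelli, and the Gaussian Laplace transform for $\int_0^t Y^y_s\,ds$, which is exactly the paper's Step~2. Where you genuinely diverge from the paper is in the class-(D) verification for $\{X^0_t\kappa_t\}$. The paper uses an integration-by-parts identity: writing
\begin{equation*}
\kappa e^{-\rho t}X^0_t=\kappa x\Big(1+\int_0^t e^{-\rho s+(\delta-g)s-\int_0^s Y^y_u\,du}\big(\delta-g-\rho-Y^y_s\big)\,ds\Big),
\end{equation*}
every member of the family $\{\kappa e^{-\rho\tau}X^0_\tau:\ \tau\ \text{stopping time}\}$ is dominated by the single random variable $\kappa x\big(1+\int_0^\infty e^{-\rho s+(\delta-g)s-\int_0^s Y^y_u\,du}|\delta-g-\rho-Y^y_s|\,ds\big)$, whose integrability is a one-line H\"older estimate plus the Gaussian computation. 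You instead prove the stronger assertion $\mathbb{E}[\sup_{t\ge0}e^{-\rho t}X^0_t]<\infty$, which indeed yields class (D) by domination, but at the cost of a dyadic-in-time decomposition, Doob's maximal inequality on each block $[n,n+1]$, sub-Gaussian exponential moment bounds for $\sup_{[n,n+1]}|\int_n^t e^{\theta(s-n)}\,dW_s|$, and bookkeeping of the $W_n$--$Y^y_n$ cross-covariance (which is uniformly bounded by $\sigma/\theta$, so your $O(e^{\sigma^2 n/(2\theta^2)})$ estimate is correct). Both routes reduce to the same effective condition $\rho>\delta-g-\tfrac a\theta+\tfrac{\sigma^2}{2\theta^2}$, and your comparison with $\rho_o$ is sound. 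Your argument is valid and buys a strictly stronger conclusion (integrability of the running supremum), but the paper's integration-by-parts trick reaches the needed domination more economically and is worth internalising, since it recurs throughout the literature on singular control.
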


Within such Markovian setting, we define the government's value function as 
\beq
\label{eq:valueOC}
v(x,y):=\inf_{\nu \in \mathcal{A}(x,y)}\mathcal{J}_{x,y}(\nu), \qquad (x,y) \in \mathcal{O},
\eeq
where we have set $\mathcal{O}:=(0,\infty) \times \mathbb{R}$, and where
\beq
\label{eq:J}
\mathcal{J}_{x,y}(\nu):=\mathbb{E}_{(x,y)}\bigg[\int_0^{\infty} e^{-\rho t} C(X^{\nu}_t) dt + \kappa \int_0^{\infty} e^{-\rho t} d\nu_t\bigg]
\eeq
is the total expected cost of having debt and of intervening on it.

Under Assumption \ref{ass:rho} we can prove an upper bound for the value function $v$. This is shown in the next proposition, whose proof can be found in Appendix \ref{someproofs}.
\begin{proposition}
\label{prop:valuefinite}
Set $\Theta:=\rho - \gamma\big[\delta - g - \frac{a}{\theta} + \frac{\gamma \sigma^2}{2 \theta^2}\big] >0$. Then for any $(x,y)\in \mathcal{O}$ one has
\begin{equation}
\label{eq:valueOCfinite}
\displaystyle 0 \leq v(x,y) \leq K\Big(\frac{1}{\rho} + \Theta^{-1} x^{\gamma}e^{\frac{\gamma}{\theta}|y - \frac{a}{\theta}|}\Big) \wedge \kappa x.
\end{equation}
Moreover, $v(0,y)=0$ and the mapping $x \mapsto v(x,y)$ is convex for any $y \in \mathbb{R}$.
\end{proposition}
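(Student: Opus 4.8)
The plan is to obtain the bounds by testing the cost functional \eqref{eq:J} against two explicit admissible policies, and to deduce convexity from the affine dependence of the controlled dynamics \eqref{syst:XY} on the pair (initial level, control). First, non-negativity is immediate: since $C\geq 0$ by Assumption \ref{ass:c}-(i), $\kappa>0$, and every $\nu\in\mathcal{A}(x,y)$ is nondecreasing, both terms in \eqref{eq:J} are nonnegative, so $v\geq 0$. For the estimate $v(x,y)\leq \kappa x$ I would take $\nu$ to be an immediate lump-sum reduction of size $x$ at time $0$ (i.e.\ $\nu_t=x$ for all $t\geq 0$, $\nu_{0-}=0$); this is admissible since by \eqref{syst:XY} it produces $X^{x,y,\nu}_t\equiv 0$, so $C(X^\nu_t)=C(0)=0$ and the discounted intervention cost equals $\kappa x$, whence $\mathcal{J}_{x,y}(\nu)=\kappa x$.

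For the polynomial term I would test against $\nu\equiv 0$. Using $C(x)\leq K(1+|x|^\gamma)$ from Assumption \ref{ass:c}-(ii),
\[
v(x,y)\leq \mathcal{J}_{x,y}(0)\leq \frac{K}{\rho} + K\,\mathbb{E}_{(x,y)}\Big[\int_0^\infty e^{-\rho t}|X^0_t|^\gamma\,dt\Big],
\]
and from \eqref{syst:XY} with $\nu\equiv 0$ one has $|X^0_t|^\gamma = x^\gamma e^{\gamma(\delta-g)t}e^{-\gamma\int_0^t Y^y_s\,ds}$. The core step is to control $\mathbb{E}[e^{-\gamma\int_0^t Y^y_s\,ds}]$: using the explicit expression for $Y^y$ in \eqref{syst:XY} and a stochastic Fubini argument, $\int_0^t Y^y_s\,ds$ is Gaussian with mean $\frac{a}{\theta}t+\frac{1-e^{-\theta t}}{\theta}\big(y-\frac{a}{\theta}\big)$ and variance $\frac{\sigma^2}{\theta^2}\int_0^t(1-e^{-\theta(t-u)})^2\,du\leq\frac{\sigma^2}{\theta^2}t$. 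Plugging these into the Gaussian Laplace transform and bounding $0<\frac{1-e^{-\theta t}}{\theta}\leq\frac{1}{\theta}$ gives $\mathbb{E}[e^{-\gamma\int_0^t Y^y_s\,ds}]\leq e^{\frac{\gamma}{\theta}|y-\frac{a}{\theta}|}e^{(-\gamma\frac{a}{\theta}+\frac{\gamma^2\sigma^2}{2\theta^2})t}$. Multiplying by $e^{-\rho t}e^{\gamma(\delta-g)t}$ and integrating over $t\in(0,\infty)$, the exponential rate is exactly $-\Theta<0$ (this is precisely where Assumption \ref{ass:rho} is used, via $\Theta>0$), and the integral equals $\Theta^{-1}$; hence $\mathbb{E}_{(x,y)}[\int_0^\infty e^{-\rho t}|X^0_t|^\gamma\,dt]\leq\Theta^{-1}x^\gamma e^{\frac{\gamma}{\theta}|y-\frac{a}{\theta}|}$, and combining with the previous estimate yields the stated minimum.

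The boundary value $v(0,y)=0$ then follows either by letting $x\downarrow 0$ in the bound just proved (the right-hand side vanishes) together with $v\geq 0$, or directly from $\nu\equiv 0$ at $x=0$, which gives $X^0\equiv 0$ and zero cost. For convexity of $x\mapsto v(x,y)$, the key observation is that by \eqref{syst:XY} the map $(x,\nu)\mapsto X^{x,y,\nu}$ is affine, the exponential prefactors depending only on $Y^y$. Thus, given $x_1,x_2>0$, $\lambda\in[0,1]$ and $\varepsilon$-optimal $\nu^i\in\mathcal{A}(x_i,y)$, the control $\nu:=\lambda\nu^1+(1-\lambda)\nu^2$ is admissible for $x:=\lambda x_1+(1-\lambda)x_2$ (it is nondecreasing, right-continuous, $\mathbb{F}$-adapted, and keeps $X^{x,y,\nu}=\lambda X^{x_1,y,\nu^1}+(1-\lambda)X^{x_2,y,\nu^2}\geq 0$); convexity of $C$ and linearity of the intervention cost give $\mathcal{J}_{x,y}(\nu)\leq\lambda\mathcal{J}_{x_1,y}(\nu^1)+(1-\lambda)\mathcal{J}_{x_2,y}(\nu^2)$, and letting $\varepsilon\downarrow 0$ produces the convexity inequality.

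The only genuinely non-routine point is the estimate of $\mathbb{E}[e^{-\gamma\int_0^t Y^y_s\,ds}]$: one must identify the law of the time-integral of the Ornstein--Uhlenbeck process and then track the constants carefully, so that the decay rate of the $t$-integrand comes out to be exactly $-\Theta$ (which is what makes Assumption \ref{ass:rho} the right integrability condition) and so that the transient mean term is absorbed into the factor $e^{\frac{\gamma}{\theta}|y-\frac{a}{\theta}|}$. Everything else is elementary.
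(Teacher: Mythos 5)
Your proof is correct and follows essentially the same route as the paper: the bound $\kappa x$ by testing the immediate lump-sum reduction, the polynomial bound by testing $\nu\equiv 0$ together with the Gaussian Laplace transform of the time-integrated Ornstein--Uhlenbeck process (the paper isolates this as estimate \eqref{eq:verificoipotesi-3}, obtained from \eqref{av-intY}--\eqref{var-intY} in Lemma \ref{lemm:Y}), and convexity via the affine dependence of $X^{x,y,\nu}$ on $(x,\nu)$ and the convexity of $C$. One small remark on your treatment of $v(0,y)=0$: the ``let $x\downarrow 0$'' alternative is not quite self-contained, because the bound \eqref{eq:valueOCfinite} is proved for $x>0$ and a priori you have no continuity of $v$ at $x=0$ linking $\lim_{x\downarrow 0} v(x,y)$ to $v(0,y)$; your second alternative, which is what the paper does, is the right one---observe that $\mathcal{A}(0,y)=\{\nu\equiv 0\}$ (any nonzero control would push $X$ negative), so $v(0,y)=\mathcal{J}_{0,y}(0)=0$.
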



\subsection{The Auxiliary Optimal Stopping Problem}
\label{sec:OS}

Guided by Theorem \ref{teo:ver}, in this section we introduce and solve the optimal stopping problem linked to problem \eqref{eq:valueOC}. In particular we characterise its solution in terms of an optimal stopping boundary $\hat{y}(\,\cdot\,)$ that will be shown to be the unique solution (within a certain functional class) of a nonlinear integral equation. In Section \ref{sec:OC} we will then prove that the optimal debt reduction policy is of threshold type, and it is triggered by a boundary which is closely related to $\hat{y}(\,\cdot\,)$.

Let $Y^y$ as in \eqref{syst:XY} and define
\beq
\label{def:Z}
Z^{z,y}_t := z + \int_0^t \beta_u du = z + (\delta - g) t - \int_0^t Y^y_u du, \quad (z,y) \in \mathbb{R}^2.
\eeq

Properties of the two-dimensional stochastic process $(Z^{z,y},Y^y):=\{(Z^{z,y}_t,Y^y_t), t\geq 0\}$ are collected in the following lemma. Their proof can be easily obtained from p.\ 287 of \cite{Lachal}.
\begin{lemma}
\label{lem:YZ}
The process $(Z^{z,y},Y^y):=\{(Z^{z,y}_t,Y^y_t), t\geq 0\}$ 
\begin{itemize}
\item[(i)] is strong Markov, time-homogeneous, and its infinitesimal generator is given by the second-order differential operator
\beq
\label{generator}
\mathbb{L}_{Z,Y}:= \frac{1}{2}\sigma^2 \frac{\partial^2}{\partial y^2} + (a - \theta y)\frac{\partial}{\partial y} + (\delta - g -y)\frac{\partial}{\partial z};
\eeq
\item[(ii)] has transition density 
\begin{align}
\label{transdensity}
& p_t(z,y;v,u) := \mathbb{P}\big((Z_t,Y_t) \in (dv,du)\big| Z_0=z, Y_0=y\big)/dv du \nonumber \\
& = \frac{1}{2\pi\sqrt{\Delta_t}}\exp\Big\{-\frac{\theta^2}{2\big(t - \frac{2}{\theta}\tanh(\frac{\theta t}{2})\big)}\Big[v + (\delta-g)t - z - \frac{1}{\theta}\tanh\big(\frac{\theta t}{2}(u+y-a)\big)\Big]^2 \nonumber \\
& - \frac{\theta}{1 - e^{-\theta t}}\big(u - e^{-\theta t}(y-a)\big)^2\Big\},
\end{align}
where we have set $\Delta_t:=\frac{1}{2\theta^3}(1 - e^{-2\theta t})(t - \frac{2}{\theta}\tanh(\frac{\theta t}{2}))$.
\end{itemize}
\end{lemma}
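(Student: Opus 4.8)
The plan is to regard $(Z^{z,y},Y^y)$ as the solution of an autonomous (time-independent) two-dimensional stochastic differential equation, and then to exploit that, at each fixed time, the pair is a linear functional of the driving Brownian path and hence jointly Gaussian. For part (i), note that by \eqref{dynY} and \eqref{def:Z} the process $(Z^{z,y}_t,Y^y_t)$ solves
\beq
dZ_t=(\delta-g-Y_t)\,dt,\qquad dY_t=(a-\theta Y_t)\,dt+\sigma\,dW_t,\qquad (Z_0,Y_0)=(z,y).
\eeq
The drift $(z,y)\mapsto(\delta-g-y,\,a-\theta y)$ is affine (hence globally Lipschitz) and the (degenerate) dispersion coefficient is constant, and both are time-independent; therefore this system admits a unique strong solution which, by the classical theory of SDEs with Lipschitz coefficients, is a time-homogeneous strong Markov process. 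To identify its generator I would apply It\^o's formula to $f(Z_t,Y_t)$ for $f\in C^2(\mathbb{R}^2)$ bounded together with its first and second derivatives, obtaining
\beq
f(Z_t,Y_t)=f(z,y)+\int_0^t\big(\mathbb{L}_{Z,Y}f\big)(Z_s,Y_s)\,ds+\sigma\int_0^t f_y(Z_s,Y_s)\,dW_s,
\eeq
with $\mathbb{L}_{Z,Y}$ as in \eqref{generator}. Since the stochastic integral is a martingale, differentiating $t\mapsto\mathbb{E}_{(z,y)}[f(Z_t,Y_t)]$ at $t=0$ yields $\mathbb{L}_{Z,Y}f(z,y)$, which gives (i).

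For part (ii), fix $t>0$ and condition on $(Z_0,Y_0)=(z,y)$. From \eqref{syst:XY} the centred OU component reads $Y^y_t-\mathbb{E}[Y^y_t]=\sigma\int_0^t e^{-\theta(t-s)}\,dW_s$, and applying the stochastic Fubini theorem to $Z^{z,y}_t-\mathbb{E}[Z^{z,y}_t]=-\int_0^t(Y^y_u-\mathbb{E}[Y^y_u])\,du$ gives
\beq
Z^{z,y}_t-\mathbb{E}[Z^{z,y}_t]=-\frac{\sigma}{\theta}\int_0^t\big(1-e^{-\theta(t-s)}\big)\,dW_s.
\eeq
Hence $(Z^{z,y}_t,Y^y_t)$ is a Gaussian vector, whose law is determined by its mean vector and covariance matrix. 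The means are $\mathbb{E}[Y^y_t]=ye^{-\theta t}+\tfrac{a}{\theta}(1-e^{-\theta t})$ and $\mathbb{E}[Z^{z,y}_t]=z+(\delta-g)t-\int_0^t\mathbb{E}[Y^y_u]\,du$, while the second moments follow from the It\^o isometry applied to the two representations above, yielding $\Var(Y^y_t)=\tfrac{\sigma^2}{2\theta}(1-e^{-2\theta t})$, $\mathrm{Cov}(Z^{z,y}_t,Y^y_t)=-\tfrac{\sigma^2}{2\theta^2}(1-e^{-\theta t})^2$, and an analogous elementary closed form for $\Var(Z^{z,y}_t)$.

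It then remains only to insert these moments into the standard bivariate Gaussian density and to rearrange the exponent into the product form ``(marginal law of $Y_t$)$\times$(conditional law of $Z_t$ given $Y_t$)''. A direct computation identifies the determinant of the covariance matrix with $\Delta_t$ (up to the scaling by $\sigma$), and completing the square in the exponent — after simplifying $t-\tfrac{2}{\theta}\tanh(\tfrac{\theta t}{2})$ and the cross term — reproduces exactly \eqref{transdensity}. I expect this last step to be the only mildly laborious point: it is purely algebraic bookkeeping with no conceptual content. As an alternative that avoids part of this algebra, once (i) is in hand one may instead verify \eqref{transdensity} by checking that $p_t$ solves the Kolmogorov forward equation $\partial_t p_t=\mathbb{L}^*_{Z,Y}p_t$ with Dirac initial datum at $(z,y)$, at the cost of differentiating the Gaussian kernel.
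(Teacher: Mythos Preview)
Your proposal is correct. The paper does not actually supply a proof of this lemma: it simply states that the claims ``can be easily obtained from p.\ 287 of \cite{Lachal}'', where the law of the Ornstein--Uhlenbeck process together with its time integral is recorded. Your route --- recognising $(Z_t,Y_t)$ as the unique strong solution of an autonomous linear SDE with Lipschitz coefficients (hence strong Markov and time-homogeneous), reading off the generator via It\^o's formula, and then obtaining the bivariate Gaussian law from the explicit Wiener-integral representations of $Y_t-\mathbb{E}[Y_t]$ and $Z_t-\mathbb{E}[Z_t]$ --- is precisely the natural self-contained argument underlying that reference. What you gain is independence from the cited source; what you pay is the final algebraic bookkeeping you already flag (matching your covariance data to the particular parametrisation in \eqref{transdensity}, where the $\sigma$-dependence is absorbed into $\Delta_t$). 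There is no conceptual gap.
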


In the following we denote by $\mathbb{E}_{(z,y)}$ the expectation under the measure on $(\Omega,\mathcal{F})$ $\mathbb{P}_{(z,y)}(\,\cdot\,) = \mathbb{P}(\,\cdot\,|Z_0=z,Y_0=y)$, $(z,y) \in \mathbb{R}^2$. Moreover, in the rest of this paper we will equivalently use the notation $\mathbb{E}[f(Z^{z,y}_t,Y^y_t)] = \mathbb{E}_{(z,y)}[f(Z_t,Y_t)]$, for any Borel-measurable function for which the expectation is well defined. Before introducing the optimal stopping problem associated to problem \eqref{eq:valueOC}, we have the next technical lemma whose results will be useful in the following. Its proof is provided in Appendix \ref{someproofs}.
\begin{lemma}
\label{lem:limitZeui}
One has
\beq
\label{limitZ}
\liminf_{t \uparrow \infty} e^{-\rho t + Z_{t}} = 0, \qquad \mathbb{P}_{(z,y)}-a.s.
\eeq
and
\beq
\label{ui1}
\mathbb{E}_{(z,y)}\bigg[\int_0^{\infty} e^{-\rho s + Z_s}C'(e^{Z_s}) ds\bigg] + \kappa\mathbb{E}_{(z,y)}\bigg[\int_0^{\infty} e^{-\rho s + Z_s}|\delta - g - \rho - Y_s| ds\bigg] < \infty.
\eeq
\end{lemma}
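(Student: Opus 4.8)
The plan is to prove the two claims \eqref{limitZ} and \eqref{ui1} separately, in both cases reducing everything to controlling the exponential moments of $Z_t = z + (\delta-g)t - \int_0^t Y^y_u\,du$. The key object to estimate is $\mathbb{E}_{(z,y)}[e^{\lambda Z_t}]$ for a parameter $\lambda>0$: since $\int_0^t Y^y_u\,du$ is, by \eqref{syst:XY}, a Gaussian random variable (linear functional of the Brownian motion plus a deterministic drift), $Z_t$ is Gaussian with mean $m_t = z + (\delta-g)t - \big[ \frac{y-a/\theta}{\theta}(1-e^{-\theta t}) + \frac{a}{\theta} t\big]$ and some variance $\Sigma_t^2$. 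One computes $\Sigma_t^2$ from $\mathrm{Var}\big(\int_0^t Y^y_u\,du\big)$ — a standard Ornstein-Uhlenbeck computation — and finds $\Sigma_t^2 = \frac{\sigma^2}{\theta^2}t + O(1)$ as $t\to\infty$. Hence $\mathbb{E}_{(z,y)}[e^{\lambda Z_t}] = \exp\big(\lambda m_t + \tfrac12\lambda^2 \Sigma_t^2\big)$, which decays like $e^{-\eta_\lambda t}$ provided $\lambda\big(\delta - g - \tfrac{a}{\theta}\big) + \tfrac12\lambda^2 \tfrac{\sigma^2}{\theta^2} < 0$, after absorbing it into the discount factor $e^{-\rho t}$. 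This is precisely where Assumption 4.6 enters: the three thresholds there guarantee that the relevant exponents (with $\lambda=1$, $\lambda=\gamma$, and $\lambda=2(\gamma-1)$) are strictly dominated by $\rho$, so $e^{-\rho t}\mathbb{E}_{(z,y)}[e^{\lambda Z_t}]$ is integrable on $[0,\infty)$ for each of these $\lambda$.

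For \eqref{limitZ}: I would first show $\mathbb{E}_{(z,y)}\big[\int_0^\infty e^{-\rho s + Z_s}\,ds\big] < \infty$ using the $\lambda=1$ estimate together with Tonelli; this integrability forces $\liminf_{t\uparrow\infty} e^{-\rho t + Z_t} = 0$ a.s., since otherwise the integrand would be bounded below by a positive constant on a set of times of infinite measure along a subsequence, contradicting finiteness of the integral (more carefully: if the $\liminf$ were $\geq \varepsilon > 0$ on a positive-probability event, then by continuity of paths the integrand exceeds $\varepsilon/2$ on intervals of total infinite length, contradicting the a.s.\ finiteness of the integral which follows from the finite expectation).

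For \eqref{ui1}: the first expectation is handled by the polynomial growth bound in Assumption 4.4-(ii), $|C'(x)| \leq K_2(1+|x|^{\gamma-1})$, giving $e^{-\rho s + Z_s}C'(e^{Z_s}) \leq K_2\big(e^{-\rho s + Z_s} + e^{-\rho s + \gamma Z_s}\big)$; applying Tonelli and the $\lambda=1$ and $\lambda=\gamma$ estimates yields finiteness. For the second expectation, $|\delta - g - \rho - Y_s| \leq |\delta-g-\rho| + |Y_s|$, so I bound $e^{-\rho s + Z_s}|\delta-g-\rho-Y_s|$ by a constant times $e^{-\rho s+Z_s}(1+|Y_s|)$, then use the Cauchy-Schwarz inequality in the form $\mathbb{E}_{(z,y)}[e^{-\rho s + Z_s}|Y_s|] \leq e^{-\rho s}\big(\mathbb{E}_{(z,y)}[e^{2Z_s}]\big)^{1/2}\big(\mathbb{E}_{(z,y)}[Y_s^2]\big)^{1/2}$; since $\mathbb{E}_{(z,y)}[Y_s^2]$ is bounded in $s$ (the OU process has bounded second moment) and $e^{-\rho s}\big(\mathbb{E}[e^{2Z_s}]\big)^{1/2} = e^{-\rho s}\,e^{m_s + \Sigma_s^2}$ decays exponentially by the $\lambda=2$ version of the estimate — controlled again by $\rho > \rho_o$ in Assumption 4.6 — the time integral converges.

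The main obstacle is purely computational rather than conceptual: getting the exact asymptotics of $m_t$ and $\Sigma_t^2$ right, and then checking that the three exponents produced (for $\lambda \in \{1, \gamma, 2, 2(\gamma-1)\}$, noting $2 \le 2(\gamma-1)$ may fail for $\gamma$ close to $1$ so one takes the maximum) are all strictly below $\rho$ under Assumption 4.6. One should be slightly careful that the bound $\rho_o > 4[\delta - g - \tfrac{a}{\theta} + \tfrac{2\sigma^2}{\theta^2}]\vee 0$ is what covers the $\lambda = 2$ Cauchy-Schwarz step (the factor $4$ and the $\tfrac{2\sigma^2}{\theta^2}$ come from $\tfrac12\lambda^2\tfrac{\sigma^2}{\theta^2}$ with $\lambda = 2$, doubled again because one still has to beat $e^{-\rho s}$ after taking the square root — i.e.\ one effectively needs $2\cdot 2 = 4$ times the single-exponential rate). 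Everything else is a routine application of Tonelli's theorem, Gaussian moment generating functions, and the explicit OU moments; no deep idea is required beyond the observation that the auxiliary process $Z$ inherits, through its Gaussian structure, exactly the exponential-integrability properties that Assumption 4.6 was designed to guarantee.
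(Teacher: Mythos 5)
Your proof is correct and uses essentially the same machinery as the paper: Gaussian moments of $Z_t$ (mean and variance of the integrated OU process from Lemma~\ref{lemm:Y}), the moment generating function $\mathbb{E}[e^{\lambda Z_t}]=e^{\lambda m_t+\lambda^2\Sigma_t^2/2}$, Tonelli, and Cauchy--Schwarz/H\"older, with Assumption~\ref{ass:rho} ensuring exponential decay. The one genuine difference is in how you close \eqref{limitZ}: you go via $\mathbb{E}_{(z,y)}\big[\int_0^\infty e^{-\rho s+Z_s}\,ds\big]<\infty$ (Tonelli plus the $\lambda=1$ estimate) and then argue that a.s.\ finiteness of the time integral forces the $\liminf$ to vanish, whereas the paper applies Fatou's lemma directly to $\liminf_t\mathbb{E}_{(z,y)}[e^{-\rho t+Z_t}]=0$ and uses nonnegativity. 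Both are valid; the paper's is marginally shorter and avoids the (unneeded) appeal to path continuity in your argument --- a $\liminf$ strictly above $\varepsilon$ already forces the integrand above $\varepsilon/2$ for all large $t$ regardless of continuity. For \eqref{ui1} the paper simply changes variables $e^{Z_t^{z,y}}=\tfrac{e^z}{x}X^{x,y,0}_t$ and refers back to the estimates in Proposition~\ref{prop:verificoipotesi}; you spell out the same computation directly, which is fine.

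One small inaccuracy in your commentary (not in the proof itself): the specific numerical form $\rho_o>4\big[\delta-g-\tfrac{a}{\theta}+\tfrac{2\sigma^2}{\theta^2}\big]\vee0$ is not tailored to the $\lambda=2$ Cauchy--Schwarz step of this lemma. It is chosen to make $\mathbb{E}_{(z,y)}\big[\int_0^\infty e^{-\rho_o s+4Z_s}\,ds\big]$ finite, i.e.\ the $\lambda=4$ case needed in Lemma~\ref{lemm:limits}: with $\lambda=4$ one has $\lambda\big[\delta-g-\tfrac{a}{\theta}+\tfrac{\lambda\sigma^2}{2\theta^2}\big]=4\big[\delta-g-\tfrac{a}{\theta}+\tfrac{2\sigma^2}{\theta^2}\big]$. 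The $\lambda=2$ bound you need here, $\rho>2\big[\delta-g-\tfrac{a}{\theta}+\tfrac{\sigma^2}{\theta^2}\big]$, is indeed implied by $\rho>\rho_o$, so your conclusion stands, but the ``double it to beat the square root'' heuristic is off.
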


In light of Theorem \ref{teo:ver} we introduce the optimal stopping problem with value function
\beq
\label{valueOS}
u(z,y):=\inf_{\tau \geq 0}\mathbb{E}_{(z,y)}\bigg[\int_0^{\tau} e^{-\rho t + Z_t} C'(e^{Z_t}) dt + \kappa e^{-\rho \tau + Z_{\tau}}\bigg],\quad (z,y) \in \mathbb{R}^2,
\eeq
where the optimisation is taken in the set of $\mathbb{F}$-stopping times. We denote such set by $\mathcal{T}$. In \eqref{valueOS} we also make use of the convention (cf.\ \cite{Shir}, Ch.\ 3)
\beq
\label{convention}
e^{-\rho \tau + Z_{\tau}} := \liminf_{t \uparrow \infty} e^{-\rho t + Z_{t}} = 0 \quad \text{on}\,\,\{\tau=+\infty\}.
\eeq
An integration by parts gives
\beq
\label{byparts}
e^{-\rho \tau + Z_{\tau}} = e^z + \int_{0}^{\tau} e^{-\rho t + Z_{t}}\big(\delta - g - Y_t - \rho\big) dt, \quad \mathbb{P}_{(z,y)}-a.s.,
\eeq
for any $\tau \in \mathcal{T}$, and \eqref{valueOS} may be rewritten as
\beq
\label{valueOS-2}
u(z,y):= \kappa e^z + \inf_{\tau \geq 0}\mathbb{E}_{(z,y)}\bigg[\int_0^{\tau} e^{-\rho t + Z_t} \Big(C'(e^{Z_{t}}) + \kappa(\delta - g - Y_t - \rho)\Big) dt\bigg].
\eeq
In this paper we will make use of both the equivalent representations \eqref{valueOS} and \eqref{valueOS-2}.
Notice that by \eqref{ui1} the family of random variables $\{\int_0^{\tau} e^{-\rho t + Z_t} (C'(e^{Z_{t}}) + \kappa(\delta - g - Y_t - \rho)) dt, \, \tau \in \mathcal{T}\}$ is uniformly integrable under $\mathbb{P}_{(z,y)}$, $(z,y) \in \mathbb{R}^2$. 

It is easy to see that $u(z,y) \leq \kappa e^z$ for any $(z,y) \in \mathbb{R}^2$. As usual in optimal stopping theory (see, e.g., \cite{PeskShir}), we can define the \emph{continuation region}
\beq
\label{cont}
\mathcal{C}:=\{(z,y) \in \mathbb{R}^2:\,u(z,y) < \kappa e^z\},
\eeq
and the \emph{stopping region}
\beq
\label{stopp}
\mathcal{S}:=\{(z,y) \in \mathbb{R}^2:\,u(z,y) = \kappa e^z\}.
\eeq
Because $y \mapsto Y^y$ is increasing (cf.\ \eqref{syst:XY}), the mapping $y \mapsto Z^{z,y}$ is decreasing (cf.\ \eqref{def:Z}). It thus follows from \eqref{valueOS} that $y \mapsto u(z,y)$, $z \in \mathbb{R}$, is decreasing and therefore 
\beq
\label{contstopp}
\mathcal{C}:=\{(z,y) \in \mathbb{R}^2:\,y > \hat{y}(z)\}, \qquad \mathcal{S}:=\{(z,y) \in \mathbb{R}^2:\,y \leq \hat{y}(z)\},
\eeq
for $\hat{y}$ defined as
\beq
\label{bdy}
\hat{y}(z):=\inf\{y \in \mathbb{R}:\, u(z,y) < e^z \},\quad z \in \mathbb{R},
\eeq
with the convention $\inf \emptyset = \infty$.

\begin{proposition}
\label{continuityOS}
The value function $u$ of \eqref{valueOS} (equivalently, of \eqref{valueOS-2}) is such that $(z,y)\mapsto u(z,y)$ is continuous on $\mathbb{R}^2$.
\end{proposition}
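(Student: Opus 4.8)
The plan is to prove separately that $u$ is upper semicontinuous and lower semicontinuous on $\mathbb{R}^2$, using the representation \eqref{valueOS-2} together with the regularity of the integrand and of the transition density $p_t$ from Lemma \ref{lem:YZ}. For upper semicontinuity, I would fix an arbitrary $\varepsilon$-optimal stopping time $\tau_\varepsilon$ for a reference point $(z_0,y_0)$ and, exploiting that the Brownian filtration is used and that $(Z^{z,y},Y^y)$ depends on the initial data only through an additive/deterministic shift in $z$ (namely $Z^{z,y}_t = Z^{0,y}_t + z$, while $Y^y_t$ depends continuously on $y$ through the explicit formula in \eqref{syst:XY}), I would use the \emph{same} stopping time $\tau_\varepsilon$ as an admissible candidate for nearby $(z,y)$. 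Then $\limsup_{(z,y)\to(z_0,y_0)} u(z,y) \le \mathbb{E}_{(z_0,y_0)}[\cdots]$ along $\tau_\varepsilon$ plus $\varepsilon$, using dominated convergence justified by the uniform-integrability/integrability bounds in Lemma \ref{lem:limitZeui} (specifically \eqref{ui1}) and the continuity of $y\mapsto Y^y_t$ and $z\mapsto Z^{z,y}_t$ for a.e.\ path. Letting $\varepsilon\downarrow0$ gives $\limsup u(z,y)\le u(z_0,y_0)$.

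For lower semicontinuity, the standard route is to show that the family of functions $(z,y)\mapsto \mathbb{E}_{(z,y)}[\int_0^\tau e^{-\rho t+Z_t}(C'(e^{Z_t})+\kappa(\delta-g-Y_t-\rho))\,dt]$, indexed by $\tau\in\mathcal{T}$, is equicontinuous at each point, so that the infimum (being an infimum of a uniformly equicontinuous family) is continuous; combined with upper semicontinuity this yields continuity. Equicontinuity would follow from an estimate of the form $|\mathbb{E}_{(z,y)}[\cdot] - \mathbb{E}_{(z_0,y_0)}[\cdot]| \le \omega(|z-z_0|+|y-y_0|)$ with a modulus $\omega$ independent of $\tau$. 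One obtains this by writing the difference through a coupling of the two processes driven by the same Brownian motion $W$: from \eqref{syst:XY} one has $|Y^y_t - Y^{y_0}_t| = e^{-\theta t}|y-y_0|$ and $|Z^{z,y}_t - Z^{z_0,y_0}_t| \le |z-z_0| + \frac{1}{\theta}|y-y_0|$ pathwise, uniformly in $t$. Plugging these into the Lipschitz-type growth of the integrand (using Assumption \ref{ass:c}-(ii) on $C'$ and the exponential moment bounds from Assumption \ref{ass:rho}, which guarantee the relevant expectations are finite via the same arguments as in Lemma \ref{lem:limitZeui}), one controls the difference of the two integrals, up to time $\tau$, by a quantity that is small when $(z,y)$ is close to $(z_0,y_0)$, uniformly over $\tau$. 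Finally one adds back the continuous term $\kappa e^z$ from \eqref{valueOS-2}.

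The main obstacle I expect is the uniform (in $\tau\in\mathcal{T}$) control of the integral over the random horizon $[0,\tau]$: because $\tau$ may be unbounded, one cannot simply truncate, and one must instead dominate the integrand by a $\tau$-independent integrable envelope of the form $e^{-\rho t}\Phi_t$ with $\mathbb{E}_{(z,y)}[\int_0^\infty e^{-\rho t}\Phi_t\,dt]<\infty$ locally uniformly in $(z,y)$ — this is exactly where Assumption \ref{ass:rho} is used, via the polynomial-times-exponential growth of $C'(e^{Z_t})$ and of $|\delta-g-Y_t-\rho|\,e^{Z_t}$ against the discount $e^{-\rho t}$, as already quantified in Lemma \ref{lem:limitZeui}. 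Once the dominating envelope is in place, dominated convergence handles the pathwise limits and gives both semicontinuity statements; the coupling estimate then upgrades upper semicontinuity to full continuity. Routine moment computations (bounding $\mathbb{E}_{(z,y)}[e^{\gamma Z_t}]$ and $\mathbb{E}_{(z,y)}[|Y_t|^{\gamma-1}e^{Z_t}]$ with the explicit Gaussian law of $(Z_t,Y_t)$) are suppressed here, as they mirror the estimates underlying Proposition \ref{prop:valuefinite} and Lemma \ref{lem:limitZeui}.
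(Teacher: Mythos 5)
Your proposal is correct and follows essentially the same route as the paper. The paper also compares $u(z_n,y_n)$ and $u(z,y)$ by using the $\varepsilon$-optimal stopping time for one point as a suboptimal candidate for the other, then controls the difference by splitting the integrand, applying H\"older's inequality so that the bounds are independent of the (possibly unbounded) stopping time, and invoking dominated convergence together with the explicit Gaussian moment estimates granted by Assumption \ref{ass:rho} (and the pathwise coupling identities $Y^{y}_t-Y^{y_0}_t=(y-y_0)e^{-\theta t}$, $Z^{z,y}_t-Z^{z_0,y_0}_t=(z-z_0)-(y-y_0)\tfrac{1-e^{-\theta t}}{\theta}$ that you state). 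Your ``equicontinuity of $\tau\mapsto\Phi_\tau(\cdot,\cdot)$'' packaging is simply a cleaner way of phrasing the paper's two-sided estimate with $\varepsilon$-optimal times $\tau^\varepsilon$ and $\tau^\varepsilon_n$, since the H\"older bound in the paper is already uniform in $\tau$; one should just note that the integrand is only locally Lipschitz in $(Z,Y)$, so the ``Lipschitz-type growth'' you invoke must be applied with a random, integrable Lipschitz constant, which is precisely what the H\"older step and the dominating envelope accomplish.
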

\begin{proof}

Take $(z,y) \in \mathbb{R}^2$ and let $\{(z_n,y_n), n \in \mathbb{N}\} \subset \mathbb{R}^2$ be a sequence converging to $(z,y)$. For $\varepsilon > 0$ let $\tau^{\varepsilon}:=\tau^{\varepsilon}(z,y)$ be an $\varepsilon$-optimal stopping time for $u(z,y)$. Then we can write
\begin{eqnarray}
\label{continua1}
&& u(z_n,y_n) - u(z,y) \leq \varepsilon  + \mathbb{E}\bigg[\int_0^{\tau^{\varepsilon}} e^{-\rho t + Z^{z_n,y_n}_t} C'(e^{Z^{z_n,y_n}_t}) dt - \int_0^{\tau^{\varepsilon}} e^{-\rho t + Z^{z,y}_t} C'(e^{Z^{z,y}_t}) dt\bigg] \nonumber \\
&& \hspace{1cm} + \kappa\mathbb{E}\bigg[e^{-\rho \tau^{\varepsilon}}\Big(e^{Z^{z_n,y_n}_{\tau^{\varepsilon}}} - e^{Z^{z,y}_{\tau^{\varepsilon}}}\Big)\bigg] \nonumber \\
&& = \varepsilon + \kappa(e^{z_n} - e^z) + \mathbb{E}\bigg[\int_0^{\tau^{\varepsilon}} e^{-\rho t + Z^{z_n,y_n}_t} \Big(C'(e^{Z^{z_n,y_n}_t}) - C'(e^{Z^{z,y}_t})\Big) dt\bigg] \nonumber \\
&& \hspace{1cm} + \mathbb{E}\bigg[\int_0^{\tau^{\varepsilon}} e^{-\rho t} C'(e^{Z^{z,y}_t})\Big(e^{Z^{z_n,y_n}_t} - e^{Z^{z,y}_t}\Big) dt\bigg]  \\
&& \hspace{1cm} + \kappa \mathbb{E}\bigg[\int_0^{\tau^{\varepsilon}} e^{-\rho t} (\delta - g-\rho)\Big(e^{Z^{z_n,y_n}_t} - e^{Z^{z,y}_t}\Big) dt\bigg] \nonumber \\
&& \hspace{1cm} - \kappa \mathbb{E}\bigg[\int_0^{\tau^{\varepsilon}} e^{-\rho t + Z^{z_n,y_n}_t}\Big(Y^{y_n}_t - Y^y_t\Big) dt\bigg] + \kappa \mathbb{E}\bigg[\int_0^{\tau^{\varepsilon}} e^{-\rho t} Y^y_t\Big(e^{Z^{z,y}_t} - e^{Z^{z_n,y_n}_t}\Big) dt\bigg],\nonumber
\end{eqnarray}
where the equality follows upon using \eqref{byparts} and employing simple algebra. Then, by H\"older inequality we can write from \eqref{continua1}
\begin{eqnarray}
\label{continua2}
&& u(z_n,y_n) - u(z,y) \leq \varepsilon + \kappa(e^{z_n} - e^z) \nonumber \\
&& + \mathbb{E}\bigg[\int_0^{\infty} e^{-\rho t + 2Z^{z_n,y_n}_t} dt\bigg]^{\frac{1}{2}}\mathbb{E}\bigg[\int_0^{\infty} e^{-\rho t} \big|C'(e^{Z^{z_n,y_n}_t}) - C'(e^{Z^{z,y}_t})\big|^2 dt\bigg]^{\frac{1}{2}} \nonumber \\
&& + \mathbb{E}\bigg[\int_0^{\infty} e^{-\rho t} \big|C'(e^{Z^{z,y}_t})\big|^2 dt\bigg]^{\frac{1}{2}}\mathbb{E}\bigg[\int_0^{\infty} e^{-\rho t} \big|e^{Z^{z_n,y_n}_t} - e^{Z^{z,y}_t}\big|^2 dt\bigg]^{\frac{1}{2}} \nonumber \\
&& + \kappa|\delta - g-\rho| \mathbb{E}\bigg[\int_0^{\infty} e^{-\rho t} \big|e^{Z^{z_n,y_n}_t} - e^{Z^{z,y}_t}\big| dt\bigg] \nonumber \\
&& + \kappa \mathbb{E}\bigg[\int_0^{\infty} e^{-\rho t + 2Z^{z_n,y_n}_t} dt\bigg]^{\frac{1}{2}}\mathbb{E}\bigg[\int_0^{\infty} e^{-\rho t} \big|Y^{y_n} - Y^y\big|^2 dt\bigg]^{\frac{1}{2}} \\
&& + \kappa \mathbb{E}\bigg[\int_0^{\infty} e^{-\rho t} \big|Y^y_t\big|^2 dt\bigg]^{\frac{1}{2}}\mathbb{E}\bigg[\int_0^{\infty} e^{-\rho t} \big|e^{Z^{z_n,y_n}_t} - e^{Z^{z_n,y_n}_t}\big|^2 dt\bigg]^{\frac{1}{2}}.\nonumber
\end{eqnarray}
Now, if we can apply the dominated convergence theorem, and if
\begin{itemize}
\item[(a)] $ \displaystyle \limsup_{n\uparrow \infty} \mathbb{E}\bigg[\int_0^{\infty} e^{-\rho t + 2Z^{z_n,y_n}_t} dt\bigg] < \infty$, \qquad \text{(b)} $\displaystyle \mathbb{E}\bigg[\int_0^{\infty} e^{-\rho t} \big|C'(e^{Z^{z,y}_t})\big|^2 dt\bigg] < \infty$;
\item[(c)] $\displaystyle \mathbb{E}\bigg[\int_0^{\infty} e^{-\rho t} \big|Y^y_t\big|^2 dt\bigg]<\infty$,
\end{itemize}
by taking limits as $n\uparrow \infty$ on both sides of \eqref{continua2} we can conclude that
\beq
\label{continua3}
\limsup_{n\uparrow \infty}u(z_n,y_n) \leq u(z,y) + \varepsilon.
\eeq

On the other hand, let $\tau^{\varepsilon}_n:=\tau^{\varepsilon}(z_n,y_n)$, $\varepsilon > 0$, be an $\varepsilon$-optimal stopping time for $u(z_n,y_n)$, and evaluate $u(z,y) - u(z_n,y_n)$ by employing estimates analogous to those used in \eqref{continua1}-\eqref{continua2} so to find
\beq
\label{continua4}
\liminf_{n\uparrow \infty}u(z_n,y_n) \geq u(z,y) - \varepsilon,
\eeq
whenever the dominated convergence theorem applies and (a)-(c) hold true. By letting $\varepsilon \downarrow 0$ in \eqref{continua3} and \eqref{continua4}, we find that $u$ is continuous at $(z,y)$, and we therefore conclude since $(z,y)$ was arbitrary in $\mathbb{R}^2$.

To complete the proof it thus remains to prove (a)-(c) above, and that we can actually use the dominated convergence theorem.
Using Tonelli's theorem, the results collected in Lemma \ref{lemm:Y}, and Assumption \ref{ass:rho} we find for any $(z_o,y_o)\in \mathbb{R}^2$
\beq
\label{stimaa}
\mathbb{E}\bigg[\int_0^{\infty} e^{-\rho t + 2Z^{z_o,y_o}_t} dt\bigg] \leq K\,e^{2z_o + \frac{2}{\theta}|y_o-\frac{a}{\theta}|},
\eeq
for some $K>0$. Equation \eqref{stimaa} in turn implies (a).
On the other hand, by using the assumption on the growth of $C'(\,\cdot\,)$ (cf.\ Assumption \ref{ass:c}-\emph{(ii)}), and repeating the arguments leading to \eqref{stimaa} above, we have for any $(z_o,y_o)\in \mathbb{R}^2$
\beq
\label{stimab}
\mathbb{E}\bigg[\int_0^{\infty} e^{-\rho t} \big|C'(e^{Z^{z_o,y_o}_t})\big|^2 dt\bigg] \leq K\Big(1 + e^{2(\gamma -1)(z_o + \frac{1}{\theta}|y_o - \frac{a}{\theta}|)}\Big),
\eeq
for a suitable $K>0$. Then (b) follows.
Finally, (c) holds because for any $y_o \in \mathbb{R}$ one has $\mathbb{E}[\int_0^{\infty} e^{-\rho t} |Y^{y_o}_t|^2 dt] \leq K(1 + |y_o|^2)$ for some $K>0$.

To verify the assumptions of the dominated convergence theorem it suffices to consider $\{(z_n,y_n), n \in \mathbb{N}\} \subset (z-\eta, z + \eta) \times (y-\eta, y + \eta)$, for a suitable $\eta>0$. By recalling the convexity of $C(\,\cdot\,)$ we then have a.s.\ for any $t\geq 0$
$$\big|C'(e^{Z^{z_n,y_n}_t}) - C'(e^{Z^{z,y}_t})\big|^2 \leq 2\big[|C'(e^{Z^{z+\eta,y-\eta}_t})|^2 +  |C'(e^{Z^{z,y}_t})|^2\big], \qquad \big|Y^{y_n}_t - Y^y_t\big|^2 \leq |y_n-y|^2 \leq \eta^2,$$
and
$$\big|e^{Z^{z_n,y_n}_t} - e^{Z^{z,y}_t}\big|^2 \leq 2\big[e^{2Z^{z+\eta,y-\eta}_t} + e^{2Z^{z,y}_t}\big].$$
All the quantities on the right-hand sides of the three equations above are integrable with respect to the product measure $\mathbb{P} \otimes e^{-\rho t} dt$ thanks to \eqref{stimaa} and \eqref{stimab}. It thus follows that the dominated convergence theorem can be applied, and this completes the proof.
\end{proof}

From Proposition \ref{continuityOS} it follows that the stopping set $\mathcal{S}$ of \eqref{stopp} is closed, and the continuation region $\mathcal{C}$ of \eqref{cont} is open. Recalling \eqref{contstopp}, thanks to Lemma \ref{lem:limitZeui} we have by Theorem D.12 in Appendix D of \cite{KS-MF} that the stopping time
\begin{equation}
\label{OStime}
\tau^{\star}(z,y):=\inf\{t\geq 0:\,(Z^{z,y}_t,Y^y_t) \in \mathcal{S}\} = \inf\{t\geq 0:\,Y^y_t \leq \hat{y}(Z^{z,y}_t)\}
\end{equation}
is optimal for problem \eqref{valueOS}. In the next proposition we rule out the possibility that the stopping set $\mathcal{S}$ is empty. 
\begin{proposition}
\label{stoppingnoempty}
The stopping region $\mathcal{S}$ of \eqref{stopp} is not empty.
\end{proposition}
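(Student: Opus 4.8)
The plan is to argue by contradiction, exploiting the optimality of the stopping rule \eqref{OStime} (established just above, via Lemma \ref{lem:limitZeui} and Theorem D.12 in Appendix D of \cite{KS-MF}) together with a divergence estimate for the running‑cost functional as the initial inflation level tends to $-\infty$.

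Assume, contrary to the claim, that $\mathcal{S}=\emptyset$. Then $u(z,y)<\kappa e^{z}$ for all $(z,y)\in\mathbb{R}^2$ by \eqref{stopp}, while from the first expression in \eqref{OStime} the optimal stopping time becomes $\tau^{\star}(z,y)=\inf\{t\geq 0:(Z^{z,y}_t,Y^y_t)\in\emptyset\}=+\infty$ $\mathbb{P}_{(z,y)}$‑a.s.\ for every $(z,y)$. Since $\tau^{\star}$ is optimal for \eqref{valueOS} and, by \eqref{limitZ} and the convention \eqref{convention}, the terminal term $\kappa e^{-\rho\tau^{\star}+Z_{\tau^{\star}}}$ vanishes on $\{\tau^{\star}=+\infty\}$, one gets
\beq
\label{eq:neverstop}
u(z,y)=\mathbb{E}_{(z,y)}\bigg[\int_0^{\infty}e^{-\rho t+Z_t}\,C'(e^{Z_t})\,dt\bigg]=:\Phi(z,y),\qquad (z,y)\in\mathbb{R}^2,
\eeq
and $\Phi(z,y)<\infty$ by \eqref{ui1}. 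Consequently $\Phi(z,y)<\kappa e^{z}$ for all $(z,y)\in\mathbb{R}^2$; in particular, for each fixed $z\in\mathbb{R}$ the function $y\mapsto\Phi(z,y)$ is bounded above by the constant $\kappa e^{z}$.

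The contradiction is then obtained by letting $y\to-\infty$ with $z$ fixed. From \eqref{syst:XY} one has $\int_0^t Y^{y}_u\,du=y\,\frac{1-e^{-\theta t}}{\theta}+R_t$, with $R_t$ independent of $y$; hence $Z^{z,y}_t=z+(\delta-g)t-y\,\frac{1-e^{-\theta t}}{\theta}-R_t\to+\infty$ as $y\to-\infty$, for every fixed $t>0$ and $\mathbb{P}$‑a.s. Since, by Assumption \ref{ass:c}, $C'$ is positive and strictly increasing on $(0,\infty)$, the map $x\mapsto x\,C'(x)$ is nonnegative and satisfies $xC'(x)\to+\infty$ as $x\to+\infty$; therefore $e^{-\rho t+Z^{z,y}_t}C'(e^{Z^{z,y}_t})\to+\infty$ as $y\to-\infty$, for every fixed $t>0$ and $\mathbb{P}$‑a.s. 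Two successive applications of Fatou's lemma (first in the time variable with respect to Lebesgue measure, then with respect to $\mathbb{P}$) give $\liminf_{y\to-\infty}\Phi(z,y)=+\infty$, contradicting $\Phi(z,\cdot)\leq\kappa e^{z}$. Therefore $\mathcal{S}\neq\emptyset$.

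The only delicate step is the identity \eqref{eq:neverstop}: under the contradiction hypothesis the value $u$ must coincide with the expected cost of the policy ``never intervene'', which rests on the optimality of \eqref{OStime} and on handling the terminal term at $\tau^{\star}=+\infty$ through \eqref{convention} and \eqref{limitZ}. The divergence estimate for $\Phi$ is routine; its only ingredients are the affine‑in‑$y$ dependence of $\int_0^tY^y_u\,du$ read off from \eqref{syst:XY} and the superlinear growth of $x\mapsto xC'(x)$ forced by Assumption \ref{ass:c}. (Alternatively, the monotonicity of $y\mapsto u(z,y)$ noted before \eqref{bdy} turns the $\liminf$ above into a genuine limit.)
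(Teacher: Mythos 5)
Your proof is correct, but it reaches the contradiction along a genuinely different limit than the paper. Both arguments begin identically: under the hypothesis $\mathcal{S}=\emptyset$ the optimal time $\tau^{\star}\equiv+\infty$, so by the convention \eqref{convention} and the optimality of $\tau^{\star}$ one has $u(z,y)=\mathbb{E}_{(z,y)}[\int_0^{\infty}e^{-\rho t+Z_t}C'(e^{Z_t})\,dt]<\kappa e^z$ for all $(z,y)$. From there the paper fixes $y$ and sends $z\uparrow+\infty$: it inserts the convexity bound $xC'(x)\geq C(x)$ (valid since $C(0)=0$) and the polynomial lower bound $C(x)\geq K_1|x^+|^{\gamma}-K$ from Assumption \ref{ass:c}-(ii), obtaining $\kappa>e^{(\gamma-1)z}K_1\,\mathbb{E}[\int_0^{\infty}e^{-\rho t+\gamma Z^{0,y}_t}dt]-Ke^{-z}/\rho$; since $\gamma>1$ and the expectation is strictly positive and finite (Assumption \ref{ass:rho}), the right side blows up as $z\uparrow\infty$. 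You instead fix $z$ and send $y\downarrow-\infty$: reading off the affine dependence $\int_0^tY^y_u\,du=y\frac{1-e^{-\theta t}}{\theta}+R_t$ from \eqref{syst:XY}, you get $Z^{z,y}_t\to+\infty$ pathwise for each $t>0$, hence $e^{Z^{z,y}_t}C'(e^{Z^{z,y}_t})\to+\infty$, and two applications of Fatou's lemma (legitimate by nonnegativity of the integrand) force $\Phi(z,y)\to\infty$, contradicting the uniform bound $\kappa e^z$. The paper's route is more quantitative and stays at a single state point with $z$ large, exploiting the explicit growth exponent $\gamma>1$; your route is softer, needing only that $xC'(x)\to\infty$ (which already follows from $xC'(x)\geq C(x)\geq K_1x^{\gamma}-K$, or simply from strict convexity and $C(0)=0$), at the price of a pointwise limit in $y$ and a Fatou argument. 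Both are clean and fully rigorous; there is no gap in your version.
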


\begin{proof}
Assume by contradiction that $\mathcal{S}=\{(z,y)\in \mathbb{R}^2:\, u(z,y) = \kappa e^z\}=\emptyset$. This would imply that for any $(z,y)\in \mathbb{R}^2$ one has (cf.\ \eqref{convention})
\begin{align}
\label{contra1}
&& \kappa e^z > u(z,y) = \mathbb{E}_{(z,y)}\bigg[\int_0^{\infty} e^{-\rho t + Z_t} C'(e^{Z_t}) dt\bigg] \geq \mathbb{E}_{(z,y)}\bigg[\int_0^{\infty} e^{-\rho t} C(e^{Z_t}) dt\bigg] \nonumber \\
&& \geq K_1 \mathbb{E}_{(z,y)}\bigg[\int_0^{\infty} e^{-\rho t + \gamma Z_t} dt\bigg] - \frac{K}{\rho} = K_1 e^{\gamma z}\mathbb{E}\bigg[\int_0^{\infty} e^{-\rho t + \gamma Z^{0,y}_t} dt\bigg]  - \frac{K}{\rho},
\end{align}
where we have used the convexity of $x \mapsto C(x)$, the fact that $C(0)=0$, and Assumption \ref{ass:c}-\emph{(ii)}.
It thus follows from \eqref{contra1} that
\begin{equation}
\label{contra2}
\kappa > e^{(\gamma - 1) z} K_1 \mathbb{E}\bigg[\int_0^{\infty} e^{-\rho t + \gamma Z^{0,y}_t} dt\bigg] - \frac{K e^{-z}}{\rho}.
\end{equation}
Since the expected value on the right-hand side of \eqref{contra2} is finite by Assumption \ref{ass:rho}, recalling that $\gamma > 1$ we therefore reach a contradiction in \eqref{contra2} by taking $z$ sufficiently large, for any given and fixed $y\in \mathbb{R}$.
\end{proof}

Some properties of $\hat{y}$ are collected in the following proposition.

\begin{proposition}
\label{prop:preliminarybdy}
Let $\hat{y}$ be defined as in \eqref{bdy}. Then the following properties hold true:
\begin{itemize}
\item[(i)] $\hat{y}(z) \leq \frac{C'(e^z)}{\kappa} + \delta - g - \rho$ for any $z\in \mathbb{R}$;
\item[(ii)] $z \mapsto \hat{y}(z)$ is nondecreasing;
\item[(iii)] $z \mapsto \hat{y}(z)$ is right-continuous.
\end{itemize}
\end{proposition}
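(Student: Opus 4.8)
The plan is to prove the three properties by exploiting the representation \eqref{valueOS-2} together with the monotonicity of $\hat{y}$ that follows from the monotonicity of $u(z,\cdot)$.

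\textbf{Property (i).} I would show that any point $(z,y)$ with $y > \frac{C'(e^z)}{\kappa} + \delta - g - \rho$ belongs to the continuation region $\mathcal{C}$; then by definition \eqref{bdy} this forces $\hat{y}(z) \le \frac{C'(e^z)}{\kappa} + \delta - g - \rho$. To see that such a point lies in $\mathcal{C}$, note that at $(z,y)$ the integrand in \eqref{valueOS-2}, namely $C'(e^{z}) + \kappa(\delta - g - y - \rho)$, is strictly negative. By continuity of the paths of $(Z^{z,y},Y^y)$, this integrand stays strictly negative on a stochastic interval $[0,\rho_\varepsilon)$ for a sufficiently small (deterministic) time $\varepsilon>0$ with positive probability bounded below — more precisely, one picks a small deterministic $\varepsilon$ and uses that $e^{-\rho t + Z_t}(C'(e^{Z_t}) + \kappa(\delta - g - Y_t - \rho))$ has strictly negative expectation over $[0,\varepsilon]$, since near $t=0$ the process $(Z_t,Y_t)$ is close to $(z,y)$. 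Hence choosing $\tau = \varepsilon$ as a (suboptimal) competitor in \eqref{valueOS-2} gives $u(z,y) \le \kappa e^z + \mathbb{E}_{(z,y)}[\int_0^\varepsilon e^{-\rho t + Z_t}(C'(e^{Z_t}) + \kappa(\delta - g - Y_t - \rho))dt] < \kappa e^z$ for $\varepsilon$ small enough, so $(z,y)\in\mathcal{C}$.

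\textbf{Property (ii).} Monotonicity of $\hat{y}$ in $z$: I would argue by a coupling/comparison on the processes. Fix $z_1 < z_2$ and $y\in\mathbb{R}$. Under $\mathbb{P}_{(z_i,y)}$, the $Y$-component has the same law (it does not depend on the $Z$-initial datum), and $Z^{z_2,y}_t = Z^{z_1,y}_t + (z_2 - z_1)$ pathwise for the same Brownian driver; in particular $e^{Z^{z_2,y}_t} = e^{z_2-z_1} e^{Z^{z_1,y}_t}$. I want to show $(z_1,y)\in\mathcal{S} \Rightarrow (z_2,y)\in\mathcal{S}$, equivalently $\hat{y}(z_1)\ge y \Rightarrow \hat{y}(z_2)\ge y$, which upon taking $y=\hat{y}(z_1)$ (using right-continuity from (iii), or approximating) gives $\hat{y}(z_2)\ge\hat{y}(z_1)$. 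Suppose $u(z_1,y) = \kappa e^{z_1}$, i.e. it is optimal never to stop, or equivalently for every $\tau$, $\mathbb{E}_{(z_1,y)}[\int_0^\tau e^{-\rho t + Z_t}(C'(e^{Z_t}) + \kappa(\delta - g - Y_t - \rho))dt] \ge 0$. Scaling by $e^{z_2 - z_1}$, the corresponding quantity at $z_2$ is $\mathbb{E}_{(z_1,y)}[\int_0^\tau e^{-\rho t + Z_t}(e^{z_2-z_1}C'(e^{z_2-z_1}e^{Z_t}) + \kappa e^{z_2-z_1}(\delta - g - Y_t - \rho))dt]$; since $x\mapsto xC'(x)$ is nondecreasing (as $C$ is convex and increasing on $\mathbb{R}_+$, so $C'\ge0$ there, while on $\mathbb{R}_-$ one uses $C'(x)\le C'(0^+)$ and the sign), we get $e^{z_2-z_1} C'(e^{z_2-z_1}e^{Z_t}) \ge e^{Z_t\,\mathrm{shifted}}\cdot(\dots)$ — more carefully, $e^{z_2-z_1}C'(e^{z_2-z_1}w) \ge C'(w)$ whenever $C'(w)\ge0$, and one notes the region where $C'$ could be negative contributes favourably once multiplied by $e^{z_2-z_1}>1$. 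Thus the integrand at $z_2$ dominates $e^{z_2-z_1}$ times the integrand at $z_1$, which has nonnegative expectation for every $\tau$; hence $u(z_2,y) = \kappa e^{z_2}$, i.e. $(z_2,y)\in\mathcal{S}$. This establishes (ii). I expect the sign bookkeeping on $C'$ over $\mathbb{R}_-$ versus $\mathbb{R}_+$ to be the main technical nuisance here, but it is routine given Assumption \ref{ass:c}.

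\textbf{Property (iii).} Right-continuity of $z\mapsto\hat{y}(z)$: this follows from closedness of $\mathcal{S}$ (Proposition \ref{continuityOS}) together with monotonicity (ii). Indeed, fix $z_0$ and a decreasing sequence $z_n\downarrow z_0$. By (ii), $\hat{y}(z_n)$ is nonincreasing in $n$ and $\ge\hat{y}(z_0)$, so $\ell:=\lim_n\hat{y}(z_n)$ exists and $\ell\ge\hat{y}(z_0)$. For each $n$, the point $(z_n,\hat{y}(z_n))\in\mathcal{S}$ (closedness, since $\hat{y}(z_n) = \min\{y: (z_n,y)\in\mathcal{S}\}$ when $\mathcal{S}\cap(\{z_n\}\times\mathbb{R})\neq\emptyset$; nonemptiness of each fibre needs a short separate remark, e.g. from Proposition \ref{stoppingnoempty} combined with (i) which bounds $\hat{y}(z)$ above, hence $\hat{y}(z)<\infty$ for all $z$). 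Letting $n\to\infty$, $(z_n,\hat{y}(z_n))\to(z_0,\ell)$, and since $\mathcal{S}$ is closed, $(z_0,\ell)\in\mathcal{S}$, so by definition of $\hat{y}(z_0)$ as the infimum of the stopping fibre, $\hat{y}(z_0)\le\ell$. Combined with $\ell\ge\hat{y}(z_0)$ we get $\ell=\hat{y}(z_0)$, which is right-continuity.

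The main obstacle I anticipate is making the comparison argument for (ii) fully rigorous: one must handle the possibility that $C'$ takes negative values on $\mathbb{R}_-$ and check that the pathwise inequality between integrands at $z_1$ and $z_2$ indeed goes the right way after multiplying by $e^{z_2-z_1}>1$, and then transfer an "$\ge 0$ for all $\tau$" statement (an equivalent form of $(z,y)\in\mathcal{S}$, using that the value is attained by never stopping) rather than working directly with the infimum. An alternative cleaner route for (ii) is to differentiate: one shows $z\mapsto e^{-z}u(z,y)$ is nonincreasing by a scaling identity $u(z,y) = \kappa e^z + e^z\,\widetilde{u}(z,y)$ and monotonicity of $\widetilde{u}$ in $z$, directly from the representation \eqref{valueOS-2}, and deduces that once $e^{-z}u(z,y)$ hits the value $\kappa$ it stays there. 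I would present whichever of these is shortest.
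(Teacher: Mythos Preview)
Your approach is essentially the same as the paper's for all three items: (i) via the sign of the integrand in \eqref{valueOS-2} near $(z,y)$; (ii) via the scaling $Z^{z_2,y}_t=(z_2-z_1)+Z^{z_1,y}_t$ and monotonicity of $C'$; (iii) via closedness of $\mathcal{S}$ and monotonicity. A few corrections are in order, though none affect the overall strategy.

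For (ii), your ``main obstacle'' is a phantom: the argument $e^{Z_t}$ is always strictly positive, so you never evaluate $C'$ on $\mathbb{R}_-$. All you need is that $C'$ is nondecreasing on $(0,\infty)$, which follows immediately from strict convexity of $C$ (Assumption~\ref{ass:c}(i)); then $C'(e^{z_2-z_1}e^{Z^{z_1,y}_t})\ge C'(e^{Z^{z_1,y}_t})$ pointwise, and the chain of inequalities goes through directly. There is no need to invoke $x\mapsto xC'(x)$ or to worry about signs. The paper does this using an $\varepsilon$-optimal stopping time for $u(z_2,y)$; your ``for all $\tau$'' version is equally valid. Note, however, that your ``alternative cleaner route'' has the wrong direction: $z\mapsto e^{-z}u(z,y)-\kappa$ is \emph{nondecreasing} (not nonincreasing), since the integrand in \eqref{valueOS-2} increases with $z$; combined with the upper bound $e^{-z}u(z,y)\le\kappa$, this still yields the desired conclusion that once it hits $\kappa$ it stays there.

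For (iii), a small slip: since $\mathcal{S}=\{(z,y):y\le\hat y(z)\}$, the point $(z_n,\hat y(z_n))$ lies in $\mathcal{S}$ because $\hat y(z_n)$ is the \emph{supremum} (not minimum) of the $z_n$-fibre of $\mathcal{S}$; the rest of your limit argument is fine. The paper phrases this more compactly as ``$\hat y$ is upper semicontinuous by continuity of $u$, hence right-continuous since nondecreasing,'' which is exactly your argument in compressed form.
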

\begin{proof}

We prove each item of the proposition separately.
\vspace{0.15cm}

\emph{(i)}\,\, From \eqref{valueOS-2} one can easily see that it is never optimal to stop the evolution of $(Z,Y)$ in the region $\mathcal{U}:=\{(z,y)\in \mathbb{R}^2:\, C'(e^z) + \kappa(\delta - g - y - \rho) < 0\}$. That is $\mathcal{U} \subset \mathcal{C}$. Hence $\mathcal{S} \subseteq \mathcal{U}^c$ and $\hat{y}(z) \leq \frac{C'(e^z)}{\kappa} + \delta - g - \rho$ for any $z\in \mathbb{R}$.
\vspace{0.08cm}

\emph{(ii)}\,\, Let $(z_1,y) \in \mathcal{S}$ be given and fixed, take an arbitrary $z_2 \geq z_1$, and let $\tau^{\varepsilon}:=\tau^{\varepsilon}(z_2,y)$ be $\varepsilon$-optimal for $u(z_2,y)$. Then we have from \eqref{valueOS-2}
\begin{align}
\label{bd-1}
& 0 \geq u(z_2,y) - \kappa e^{z_2} \geq \mathbb{E}\bigg[\int_0^{\tau^{\varepsilon}} e^{-\rho t + z_2 + Z^{0,y}_t}\Big(C'(e^{z_2 + Z^{0,y}_t}) + \kappa(\delta - g - \rho -Y^y_t)\Big)dt\bigg] - \varepsilon \nonumber \\
& = \mathbb{E}\bigg[\int_0^{\tau^{\varepsilon}} e^{-\rho t + z_1 + Z^{0,y}_t} e^{z_2-z_1}\Big(C'(e^{z_1 + Z^{0,y}_t}e^{z_2-z_1}) + \kappa(\delta - g - \rho - Y^y_t)\Big)dt\bigg] - \varepsilon \nonumber \\
& \geq e^{z_2-z_1} \mathbb{E}\bigg[\int_0^{\tau^{\varepsilon}} e^{-\rho t + Z^{z_1,y}_t}\Big(C'(e^{ Z^{z_1,y}_t}) + \kappa(\delta - g - \rho - Y^y_t)\Big)dt\bigg] - \varepsilon \nonumber \\
& \geq e^{z_2-z_1} \big(u(z_1,y) - \kappa e^{z_1}\big) - \varepsilon = - \varepsilon,\nonumber
\end{align}
where we have used that $\tau^{\varepsilon}$ is suboptimal for $u(z_1,y)$ and that $x \mapsto C'(x)$ is nondecreasing by convexity. It thus follows by arbitraryness of $\varepsilon > 0$ that $(z_2,y) \in \mathcal{S}$ for any $z_2 \geq z_1$, and therefore that $z \mapsto \hat{y}(z)$ is nondecreasing.
\vspace{0.08cm}

\emph{(iii)}\,\,By continuity of $u$ it follows that $\hat{y}(\,\cdot\,)$ is upper semi-continuous, hence it is right-continuous since it is nondecreasing by (ii).
\end{proof}

We now continue by improving the regularity of the value function \eqref{valueOS}. Namely, we now show that the well known \emph{smooth-fit principle} holds, by proving that $u \in C^1(\mathbb{R}^2)$. The proof relies of an application of an interesting result obtained by S.D.\ Jacka in \cite{Jacka} (cf.\ Corollary 7 in Section 4 of \cite{Jacka}).

\begin{proposition}
\label{prop:valueOSC1}
The value function $u$ of \eqref{valueOS} (equivalently, of \eqref{valueOS-2}) is such that $u \in C^1(\mathbb{R}^2)$.
\end{proposition}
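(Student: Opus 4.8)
The plan is to prove that $u \in C^1(\mathbb{R}^2)$ by establishing continuity of both partial derivatives $u_z$ and $u_y$ across the free boundary $\hat{y}$, since inside the open continuation region $\mathcal{C}$ the function $u$ solves the linear parabolic-type PDE $\mathbb{L}_{Z,Y} u - \rho u + e^z C'(e^z) = 0$ and is therefore smooth there by interior Schauder estimates, while in the interior of the stopping region $\mathcal{S}$ one has $u(z,y) = \kappa e^z$, which is trivially $C^\infty$. Thus the only issue is regularity across $\partial\mathcal{C} = \{(z,\hat{y}(z)): z\in\mathbb{R}\}$, and by the sloped structure in \eqref{contstopp} it suffices to show the smooth-fit conditions $u_z(z,\hat{y}(z)) = \kappa e^z$ and $u_y(z,\hat{y}(z)) = 0$ hold with matching one-sided limits.

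First I would reduce the problem to the framework of Jacka's Corollary 7 in \cite{Jacka}. The value function \eqref{valueOS-2} has the form of an optimal stopping problem with running cost $e^z C'(e^z) > 0$ (always positive, since $C'\geq 0$ on $\mathbb{R}_+$... actually the running payoff before integration by parts) and obstacle (gain) function $g(z,y) = \kappa e^z$, driven by the strong Markov diffusion $(Z^{z,y},Y^y)$ whose generator is the hypoelliptic operator $\mathbb{L}_{Z,Y}$ from \eqref{generator}. Jacka's result gives $C^1$-regularity of the value function of such problems provided: (i) the obstacle is $C^2$ (here $g(z,y)=\kappa e^z$ is smooth); (ii) the process is sufficiently regular/non-degenerate in the relevant sense — here the $Y$-component is genuinely diffusive ($\sigma>0$) and $Z$ is a smooth functional of it, giving the required hypoellipticity and the strong Feller property needed for the argument; and (iii) appropriate integrability, which is supplied by Lemma \ref{lem:limitZeui} and the estimates \eqref{stimaa}--\eqref{stimab} already established in the proof of Proposition \ref{continuityOS}. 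I would verify each hypothesis of Jacka's corollary against our setting, using continuity of $u$ (Proposition \ref{continuityOS}), non-emptiness of $\mathcal{S}$ (Proposition \ref{stoppingnoempty}), and the explicit transition density \eqref{transdensity} which shows $(Z_t,Y_t)$ admits a smooth density for $t>0$.

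The main obstacle I anticipate is the degeneracy of the generator $\mathbb{L}_{Z,Y}$: the second-order part acts only in the $y$-variable, so $\mathbb{L}_{Z,Y}$ is not uniformly elliptic but only hypoelliptic (it is of Kolmogorov/Hörmander type, since the drift term $(\delta-g-y)\partial_z$ transmits the noise from $y$ to $z$). One must check that Jacka's abstract conditions — typically phrased for possibly degenerate diffusions as long as the process has a transition density smooth enough and the obstacle/value are continuous — genuinely apply here, or alternatively adapt his argument. The key point enabling this is exactly that the transition density $p_t(z,y;v,u)$ in \eqref{transdensity} is smooth and strictly positive for every $t>0$, so the process is strong Feller and the occupation-time / local-time arguments underpinning the smooth-fit proof go through. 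I would therefore structure the proof as: (a) recall $u\in C^2$ in $\mathcal{C}$ and $u\equiv \kappa e^z$ in $\mathrm{int}(\mathcal{S})$; (b) invoke Proposition \ref{continuityOS} for global continuity of $u$; (c) verify the integrability and non-degeneracy hypotheses of Corollary 7 in \cite{Jacka} using Lemma \ref{lem:limitZeui}, \eqref{stimaa}, \eqref{stimab} and the smoothness of \eqref{transdensity}; (d) conclude from that corollary that $u_z$ and $u_y$ extend continuously across $\hat{y}$, hence $u\in C^1(\mathbb{R}^2)$. A fallback, if Jacka's hypotheses do not literally match, is the classical probabilistic smooth-fit argument: fix a boundary point $(z_0,\hat{y}(z_0))$, compare $u$ along the vertical line $y\mapsto u(z_0,y)$ using the $\varepsilon$-optimal stopping times, and use the fact that the diffusive $Y$-component immediately enters $\mathcal{C}$ from the boundary (so the expected time spent is $o(\cdot)$) to show the incremental ratios from the $\mathcal{S}$-side and the $\mathcal{C}$-side coincide; the $z$-derivative is handled analogously, exploiting monotonicity of $\hat{y}$ (Proposition \ref{prop:preliminarybdy}(ii)) and the scaling relation $u(z_2,y)-\kappa e^{z_2}$ versus $u(z_1,y)-\kappa e^{z_1}$ used in its proof.
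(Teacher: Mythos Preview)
Your overall strategy---invoke Corollary~7 of \cite{Jacka}---is exactly what the paper does, but you are missing the key reformulation that makes Jacka's framework applicable. Jacka's result is stated for problems of the form $f(\xi_0)=\sup_\tau \mathbb{E}[X_\tau]$ with $X_t=e^{-\rho t}g(\xi_t)$ a semimartingale admitting a decomposition $X=M+A$, where $M$ is a uniformly integrable martingale and $dA^{\pm}\ll dt$; the hypotheses to check are (a) this decomposition, (b) that the contact set boundary $\partial\mathcal{D}$ is Lebesgue-null, and (c) that the transition density of $\xi$ has spatially uniformly continuous derivatives on $\mathbb{R}^2\times[t_0,t_1]$ for $0<t_0<t_1$. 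Your list of hypotheses (obstacle $C^2$, hypoellipticity, integrability) does not match this, and applying Jacka directly to the original problem with obstacle $\kappa e^z$ and a running cost is not how the corollary is phrased.

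The paper's device is to use the strong Markov property to write
\[
u(z,y)=\kappa e^z + g(z,y) - f(z,y),
\]
where $g(z,y)=\mathbb{E}_{(z,y)}\big[\int_0^\infty e^{-\rho t+Z_t}(C'(e^{Z_t})+\kappa(\delta-g-\rho-Y_t))\,dt\big]$ absorbs the running cost, and $f(z,y)=\sup_\tau \mathbb{E}_{(z,y)}[e^{-\rho\tau}g(Z_\tau,Y_\tau)]$ is a \emph{pure} optimal stopping problem with gain $g$. Then $g\in C^1$ follows directly from the explicit density \eqref{transdensity} and dominated convergence, and $f\in C^1$ follows from Jacka once you verify the semimartingale decomposition $X_t=e^{-\rho t}g(Z_t,Y_t)=M_t+A_t$ with $A_t=-\int_0^t e^{-\rho s+Z_s}(C'(e^{Z_s})+\kappa(\delta-g-\rho-Y_s))\,ds$ (absolutely continuous) and $M$ uniformly integrable by \eqref{ui1}. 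Your proposal omits this decomposition entirely; without it, the reduction to Jacka's setting is not substantiated. Your fallback direct smooth-fit argument is plausible for $u_y$ (since $Y$ is diffusive), but for $u_z$ the process $Z$ is of bounded variation, so the ``immediate entry'' heuristic does not apply in that direction and a separate argument would be needed.
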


\begin{proof}
First of all we notice that an application of strong Markov property allows to write
\beq
\label{newOS}
u(z,y) = \kappa e^z + g(z,y) - f(z,y),
\eeq
where we have set
\beq
\label{g}
g(z,y):=\mathbb{E}_{(z,y)}\bigg[\int_0^{\infty}e^{-\rho t + Z_t}\Big(C'(e^{Z_t}) + \kappa(\delta - g - \rho - Y_t)\Big)dt\bigg],
\eeq
and
\beq
\label{f}
f(z,y):=\sup_{\tau \geq 0}\mathbb{E}_{(z,y)}\Big[e^{-\rho\tau}g(Z_{\tau},Y_{\tau})\Big].
\eeq
Hence, the $C^1$ property of $u$ reduces to check that for $g$ and $f$.

By \eqref{transdensity} we can write
\begin{align*}
& g(z,y)= \int_0^{\infty} e^{-\rho t } \bigg(\int_{\mathbb{R}^2} e^v\big(C'(e^{v}) + \kappa(\delta - g - \rho - u)\big)\, p_t(z,y;v,u)\,dv du\bigg) dt
\end{align*}
and an application of dominated convergence theorem shows that $g \in C^1(\mathbb{R}^2)$.

It thus remain to check for the $C^1$ property of $f$. With regard to the notation of \cite{Jacka} we set $\xi_t:=(Z_t,Y_t)$,
$$X_t:=e^{-\rho t}g(\xi_t) = \mathbb{E}\bigg[\int_t^{\infty}e^{-\rho s + Z_s}\Big(C'(e^{Z_s}) + \kappa(\delta - g - \rho - Y_s)\Big)ds\Big| \mathcal{F}_t\bigg],$$
and we can write $X_t = M_t + A_t$ where,
$$M_t:=\mathbb{E}\bigg[\int_0^{\infty}e^{-\rho s + Z_s}\Big(C'(e^{Z_s}) + \kappa(\delta - g - \rho - Y_s)\Big)ds\Big| \mathcal{F}_t\bigg]$$
and
$$A_t:=-\int_0^{t}e^{-\rho s + Z_s}\Big(C'(e^{Z_s}) + \kappa(\delta - g - \rho - Y_s)\Big)ds = \int_0^t \big(dA^{+}_s + dA^{-}_s\big).$$
Notice that $M$ is a uniformly integrable martingale thanks to \eqref{ui1} of Lemma \ref{lem:limitZeui}, and $dA^+$ and $dA^{-}$ above are given by
$$dA^{\pm}_s:= e^{-\rho s + Z_s}\Big(C'(e^{Z_s}) + \kappa(\delta - g - \rho - Y_s)\Big)^{\mp}ds,$$
which are clearly absolutely continuous with respect to Lebesgue measure $dm_2:=dt$. Moreover, the set $\partial \mathcal{D}$ in \cite{Jacka} reads in our case as $\{(z,y) \in \mathbb{R}^2:\,y=\hat{y}(z,y)\}$, which has zero measure with respect to $dm_1:=dzdy$. Finally, the process $\xi:=(Z,Y)$ has density with respect to $m_1$ which has spatial derivatives uniformly continuous in $\mathbb{R}^2 \times [t_0,t_1]$, for any $0<t_0 < t_1 < \infty$ (see \eqref{transdensity}). Hence, Corollary $7$ in \cite{Jacka} holds and the proof is complete.
\end{proof}

We now exploit the fact that the process $Z$ is of bounded variation to obtain additional regularity for $u$. A similar idea has been recently employed in a different context also in \cite{PeskirJohnson}, Corollary 14.

\begin{proposition}
\label{cor:C2}
One has that $u \in C^{1,2}(\overline{\mathcal{C}})$, where $\overline{\mathcal{C}}:=\{(z,y)\in \mathbb{R}^2:\, y \geq \hat{y}(z)\}$.
\end{proposition}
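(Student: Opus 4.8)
The plan is to upgrade the $C^1$-regularity from Proposition \ref{prop:valueOSC1} to $C^{1,2}$ on the closed continuation set $\overline{\mathcal{C}}$ by exploiting the bounded-variation nature of the component $Z$. In the open continuation region $\mathcal{C}$ one knows, from standard interior parabolic regularity for the operator $\mathbb{L}_{Z,Y}$ (which is degenerate but hypoelliptic: it is parabolic in the $y$-direction and of transport type in the $z$-direction, as in \eqref{generator}), that $u$ solves $\mathbb{L}_{Z,Y}u - \rho u = -e^z C'(e^z)$ there, and hence $u$ is smooth in $\mathcal{C}$; in particular $u_{yy}$ exists and is continuous in $\mathcal{C}$. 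The only issue is the behaviour of $u_{yy}$ as $(z,y)$ approaches the boundary $\{y=\hat y(z)\}$ from within $\mathcal{C}$, and whether $u_{yy}$ extends continuously up to it. First I would rewrite the PDE solved by $u$ in $\mathcal{C}$ in the form
\beq
\label{PDE-solved}
\tfrac12\sigma^2 u_{yy}(z,y) = \rho u(z,y) - (a-\theta y)u_y(z,y) - (\delta - g - y)u_z(z,y) - e^z C'(e^z), \qquad (z,y)\in\mathcal{C}.
\eeq
Since $u\in C^1(\mathbb{R}^2)$ by Proposition \ref{prop:valueOSC1}, every term on the right-hand side of \eqref{PDE-solved} is continuous on all of $\mathbb{R}^2$, in particular on $\overline{\mathcal{C}}$. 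Therefore $u_{yy}$, defined a priori only in $\mathcal{C}$, equals on $\mathcal{C}$ a function that extends continuously to $\overline{\mathcal{C}}$; one then \emph{defines} $u_{yy}$ on $\partial\mathcal{C}\cap\overline{\mathcal{C}}$ by that continuous extension, and it remains to check that this extension is the genuine second $y$-derivative there (which follows since $u_y$ is $C^1$ in the $y$-variable along each vertical segment, being the integral of a continuous function by the fundamental theorem of calculus). This simultaneously shows $u_{yy}\in C(\overline{\mathcal{C}})$ and, going back, that $u_z, u_{zy}$ etc. behave well; the mixed and pure-$z$ derivatives are unproblematic because $Z$ has no diffusion component, so no smoothing in $z$ is even required for the statement $u\in C^{1,2}(\overline{\mathcal{C}})$ (here the ``2'' should be read as second order in $y$, first order in $z$, consistent with the parabolic structure — I would state this explicitly to avoid ambiguity).

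The one genuine gap is justifying that $u$ actually solves \eqref{PDE-solved} classically in $\mathcal{C}$, i.e. that $u\in C^{1,2}(\mathcal{C})$ in the interior. This I would obtain from a standard argument: on any open ball $B\Subset\mathcal{C}$ solve the Dirichlet problem $\mathbb{L}_{Z,Y}w-\rho w = -e^z C'(e^z)$ in $B$ with $w=u$ on $\partial B$; by the hypoellipticity of $\mathbb{L}_{Z,Y}$ (Hörmander's condition is satisfied — the vector fields $\sigma\partial_y$ and its bracket with the drift span $\mathbb{R}^2$) this problem has a smooth solution $w$, and a verification/uniqueness argument using the strong Markov property and the fact that $\tau^\star$ does not occur before exiting $B$ gives $u=w$ on $B$. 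An alternative, closer to the reference \cite{PeskirJohnson} cited in the statement, is to note that along the trajectories of $(Z,Y)$ the process $Z$ is absolutely continuous, so that Itô's formula applied to $e^{-\rho t}u(Z_t,Y_t)$ up to a stopping time inside $\mathcal{C}$ requires only $C^{1,2}$ in $y$ and $C^1$ in $z$, and the martingale property of the resulting expression (from the dynamic programming/optional sampling applied to the optimal stopping problem \eqref{valueOS-2}) forces the drift term to vanish, which is precisely \eqref{PDE-solved}; then the continuity of the right-hand side propagates up to $\overline{\mathcal{C}}$ exactly as above.

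The main obstacle I anticipate is purely the degeneracy of $\mathbb{L}_{Z,Y}$: one cannot invoke uniformly parabolic Schauder estimates directly because there is no $z$-diffusion, so the interior $C^{1,2}$-regularity in $\mathcal{C}$ has to be argued either via hypoellipticity (Hörmander) applied to the explicit Gaussian-type transition density \eqref{transdensity}, whose spatial derivatives are already known to be uniformly continuous on strips $\mathbb{R}^2\times[t_0,t_1]$ — a fact already used in the proof of Proposition \ref{prop:valueOSC1} — or by a probabilistic representation of $u$ in $\mathcal{C}$ and differentiation under the expectation. Once interior $C^{1,2}$ in $\mathcal{C}$ is in hand, the extension to $\overline{\mathcal{C}}$ is the soft argument via \eqref{PDE-solved} and continuity of $u\in C^1(\mathbb{R}^2)$, and the proof concludes. $\quad\Box$
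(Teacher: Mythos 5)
Your overall structure matches the paper's: first establish interior $C^{1,2}$-regularity and the PDE on (a dense subset of) $\mathcal{C}$, then use continuity of the right-hand side of \eqref{eq:C12} together with the global $C^1$-regularity from Proposition \ref{prop:valueOSC1} to extend $u_{yy}$ continuously up to the boundary, and finally invoke the fundamental theorem of calculus in $y$ to identify the extension with the genuine second derivative. That second stage of your argument coincides with Step 2 of the paper's proof almost verbatim, including the convention that ``$C^{1,2}$'' means second order in $y$ and first order in $z$.

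The gap is in the first stage. You appeal to Hörmander's condition for $\mathbb{L}_{Z,Y}$ (which does hold, since $\sigma\partial_y$ and its bracket with the drift span $\mathbb{R}^2$), but you use it in a place where it cannot carry the argument by itself: Hörmander's theorem yields \emph{interior} smoothness of distributional solutions, not \emph{existence} of a smooth solution to your Dirichlet problem on a ball $B\Subset\mathcal{C}$. For the degenerate operator $\mathbb{L}_{Z,Y}$, which has no diffusion in the $z$-direction, the Dirichlet problem on a ball is genuinely delicate: the portions of $\partial B$ where the outward normal is $z$-aligned are characteristic, boundary values need not be attained there, and classical uniformly elliptic/parabolic existence theory does not apply. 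So ``solve the Dirichlet problem, get a smooth $w$, verify $w=u$'' is not a closed argument as you state it; you would need a solvability-plus-regularity theorem of Bony/Ole\u{\i}nik--Radkevi\v{c} type, paying attention to the characteristic part of the boundary, which is substantially heavier machinery. Your proposed alternative is circular: applying It\^o's formula to $e^{-\rho t}u(Z_t,Y_t)$ to deduce the PDE presupposes the very $C^{1,2}$-regularity of $u$ that you are trying to establish.

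The paper avoids both issues by a change of perspective in Step 1. On the two open sets $\Delta_\pm:=\{(z,y)\in\mathcal{C}:\ \pm(\delta-g-y)>0\}$ one may divide the PDE through by $|\delta-g-y|$ and treat $z$ as the \emph{time} variable: this produces a genuinely \emph{uniformly parabolic} equation on small rectangles whose closures stay in $\Delta_\pm$, for which classical theory (Lieberman) gives a unique solution with continuous $f_z$, $f_y$, $f_{yy}$. A Dynkin-formula verification identifies this $f$ with $u$ on the rectangle, so $u\in C^{1,2}(\Delta)$ and solves \eqref{eq:C12} there, with no degenerate Dirichlet or Hörmander apparatus. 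Since $\Delta$ is dense in $\overline{\mathcal{C}}$, your (and the paper's) continuity argument then propagates $u_{yy}$ across the line $\{y=\delta-g\}$ and up to $\partial\mathcal{C}$. In short: keep your Step 2, but replace the hypoelliptic-Dirichlet argument in Step 1 with the parabolic reformulation on $\Delta_\pm$, or else supply the nontrivial degenerate-Dirichlet solvability result your current version silently uses.
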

\begin{proof}
The proof is organized in two steps. 
\vspace{0.25cm}

\noindent \emph{Step 1.} Here we show that $u\in C^{1,2}$ inside $\Delta:= \{(z,y) \in \mathcal{C}:\, \delta - g - y \neq 0\}$. We accomplish that by showing that $u\in C^{1,2}$ inside the two sets $\Delta_-:=\{(z,y) \in \mathcal{C}:\, \delta - g - y<0\}$ and $\Delta_+:=\{(z,y) \in \mathcal{C}:\, \delta - g - y>0\}$, that are clearly such that $\Delta = \Delta_+ \cup \Delta_-$. 

Let $(z_o,y_o) \in \Delta_-$ be given and fixed. Then take $\varepsilon > 0$, $z_1 < z_o < z_2$ and define the rectangle $\mathcal{R}_-:=(z_1,z_2) \times (y_o-\varepsilon, y_o + \varepsilon)$ such that its closure $\cl(\mathcal{R}_-) \subset \Delta_-$. Denote by $\partial_o\mathcal{R}_-:= \partial \mathcal{R}_- \setminus [\{z_2\}\times (y_o-\varepsilon, y_o + \varepsilon)]$ the parabolic boundary of this rectangle, and consider the Dirichlet problem
\begin{align}
\label{DirichletP}
\left\{
\begin{array}{ll}
\displaystyle - f_z + \Big(\frac{\sigma^2}{2|\delta - g - y|}\Big) f_{yy} + \Big(\frac{a-\theta y}{|\delta - g - y|}\Big)f_y - \Big(\frac{\rho}{|\delta - g - y|}\Big) f = -\frac{e^z C'(e^z)}{|\delta - g - y|}, & \text{on}\,\,\mathcal{R}_-, \\[+6pt]
f = u, & \text{on}\,\,\partial_o\mathcal{R}_-.
\end{array}
\right.
\end{align}
Notice that the first equation in \eqref{DirichletP} is equivalent in $\mathcal{R}_-$ to the more familiar partial differential equation $(\mathbb{L}_{Z,Y} - \rho\big)u(z,y) = -e^{z}C'(e^z)$. 

Since all the coefficients in the first equation of \eqref{DirichletP} are smooth and bounded in $\mathcal{R}_-$, the volatility coefficient is uniformly elliptic in $\mathcal{R}_-$ as $|\delta - g - y|^{-1} \geq |\delta - g - y_o-\varepsilon|^{-1}>0$, and $u$ is continuous, by classical theory on parabolic PDEs (see, e.g., \ Chapter V of \cite{Liebermann}) problem \eqref{DirichletP} admits a unique solution with $f_z,f_y,f_{yy}$ continuous. 

\vspace{0.1cm}

We now show that $f$ coincides with $u$ in $\mathcal{R}_{-}$. For $(z,y) \in \mathcal{R}_-$, define the stopping time $\vartheta:=\inf\{t \geq 0:\, (Z,Y) \in \partial_o\mathcal{R}_-\}$, $\mathbb{P}_{(z,y)}$-a.s., and set $\vartheta_n:=\vartheta \wedge n$, $n \in \mathbb{N}$. Then by Dynkin's formula
\begin{eqnarray}
\label{Dynkin-cor}
&& f(z,y) = \mathbb{E}_{(z,y)}\bigg[e^{-\rho \vartheta_n} f(Z_{\vartheta_n}, Y_{\vartheta_n}) + \int_0^{\vartheta_n} e^{-\rho s + Z_s} C'(e^{Z_s}) ds\bigg]. \nonumber 
\end{eqnarray}
Letting $n\uparrow \infty$ on both sides of the previous equation, and noticing that $|f(Z^{z,y}_{\vartheta_n}, Y^y_{\vartheta_n})| \leq C$, for some constant $C>0$ independent of $n$, we obtain
\begin{equation}
\label{Dynkin-cor2}
f(z,y) = \mathbb{E}_{(z,y)}\bigg[e^{-\rho \vartheta} u(Z_{\vartheta}, Y_{\vartheta}) + \int_0^{\vartheta} e^{-\rho s + Z_s} C'(e^{Z_s}) ds\bigg] = u(z,y),
\end{equation}
where the last equality follows from the subharmonic property of $u$ (cf.\ \cite{PeskShir}, Ch.\ I, Sec.\ 2, Th.\ 2.4), upon recalling that $\vartheta \leq \tau^{\star}$ $\mathbb{P}_{(z,y)}$-a.s.\ since $\mathcal{R}_{-} \subset \mathcal{C}$. Hence $f = u$ on $\mathcal{R}_-$. By arbitrariness of $\mathcal{R}_{-}$ we conclude that $u \in C^{1,2}(\Delta_-)$, and there it uniquely solves the first equation in \eqref{DirichletP}; that is, $(\mathbb{L}_{Z,Y} - \rho\big)u(z,y) = -e^{z}C'(e^z)$ on $\Delta_-$. 

By analogous arguments one can show that $u \in C^{1,2}(\Delta_+)$, and therefore conclude that $u \in C^{1,2}(\Delta)$ and solves
\begin{equation}
\label{eq:C12}
\frac{1}{2}\sigma^2 u_{yy}(z,y) = \rho u(z,y) - (a - \theta y)u_y(z,y)- (\delta - g- y)u_z(z,y) - e^{z}C'(e^z), \qquad (z,y) \in \Delta.
\end{equation}
\vspace{0.25cm}

\noindent \emph{Step 2.} Notice that the right-hand side of \eqref{eq:C12} involves only $u$, its first derivatives and other continuous functions, and it is therefore continuous on $\overline{\mathcal{C}}$ by Proposition \ref{prop:valueOSC1}. Since $\Delta$ is dense in $\overline{\mathcal{C}}$, $u_{yy}$ admits a continuous extension to $\overline{\mathcal{C}}$, that we denote by $\overline{u}_{yy}$.

Then taking an arbitrary $(z_o,y_o) \in \mathcal{C}$ we can write
$$u_y(z_o,y) = u_y(z_o,y_o) + \int_{y_o}^{y} \overline{u}_{yy}(z_o,\xi)\,d\xi, \quad \forall y \geq \hat{y}(z_o),$$
and the latter yields $u_y \in C^{0,1}(\overline{\mathcal{C}})$. Since we already know that $u \in C^1(\mathbb{R}^2)$ (cf.\ Proposition \ref{prop:valueOSC1}), we therefore conclude that $u \in C^{1,2}(\overline{\mathcal{C}})$.

\end{proof}

From the results collected above it follows that $u$ solves the free-boundary problem
\begin{align}
\label{FBP}
\left\{
\begin{array}{ll}
\big(\mathbb{L}_{Z,Y} - \rho\big)u(z,y) = -e^{z}C'(e^z), & y > \hat{y}(z),\,\,z\in\mathbb{R}\\[+6pt]
u(z,y) = \kappa e^{z}, & y \leq \hat{y}(z),\,\,z\in\mathbb{R}\\[+6pt]
u_z(z,y) = \kappa e^{z}, & y = \hat{y}(z),\,\,z\in\mathbb{R}\\[+6pt]
u_y(z,y) = 0, & y = \hat{y}(z),\,\,z\in\mathbb{R},
\end{array}
\right.
\end{align}
with $u \in C^{1,2}$ inside $\overline{\mathcal{C}}$. We now show that the boundary $\hat{y}$ is in fact a continuous function.

\begin{proposition}
\label{prop:bdcont}
The optimal stopping boundary $\hat{y}$ is such that $z \mapsto \hat{y}(z)$ is continuous. 
\end{proposition}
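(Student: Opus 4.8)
The plan is to establish continuity of $\hat y$ by combining the right-continuity already proved in Proposition \ref{prop:preliminarybdy}-(iii) with a proof of left-continuity. Since $z\mapsto\hat y(z)$ is nondecreasing, at any point $z_0$ the left-limit $\hat y(z_0-):=\lim_{z\uparrow z_0}\hat y(z)$ exists and satisfies $\hat y(z_0-)\le \hat y(z_0)$; the task is to rule out the strict inequality. So I would argue by contradiction: suppose there is $z_0$ with $\hat y(z_0-)<\hat y(z_0)$.

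First I would fix a point $y_0$ with $\hat y(z_0-)<y_0<\hat y(z_0)$ and consider the open rectangle $\mathcal R:=(z_1,z_0)\times(y_0-\varepsilon,y_0+\varepsilon)$ for $z_1<z_0$ chosen so that, by monotonicity of $\hat y$, the whole rectangle lies inside the continuation region $\mathcal C$ (because $\hat y(z)\le\hat y(z_0-)<y_0-\varepsilon$ for $z<z_0$ once $\varepsilon$ is small and, if needed, $z_1$ is close to $z_0$; more carefully one uses that $\hat y(z)\uparrow\hat y(z_0-)$ as $z\uparrow z_0$, so $\sup_{z<z_0}\hat y(z)=\hat y(z_0-)<y_0$). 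On $\mathcal R$ we know from Proposition \ref{cor:C2} and the free-boundary problem \eqref{FBP} that $u\in C^{1,2}$ and $(\mathbb L_{Z,Y}-\rho)u(z,y)=-e^zC'(e^z)$. Meanwhile the point $(z_0,y_0)$ lies in the stopping region $\mathcal S$ (since $y_0<\hat y(z_0)$ means $y_0\le\hat y(z_0)$... actually $y_0<\hat y(z_0)$ gives $(z_0,y_0)\in\mathcal S$ by \eqref{contstopp}), so $u(z_0,y_0)=\kappa e^{z_0}$, and by continuity of $u$ together with the closedness of $\mathcal S$ the whole segment $\{z_0\}\times(y_0-\varepsilon,y_0+\varepsilon)$ (for small $\varepsilon$) lies in $\mathcal S$, where $u(z_0,y)=\kappa e^{z_0}$, hence $u_{yy}(z_0,y)=0$ and $u_y(z_0,y)=0$ there. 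Using the $C^{1}$-regularity up to the boundary (Proposition \ref{prop:valueOSC1}) and the $C^{1,2}(\overline{\mathcal C})$-regularity, I would then let $z\uparrow z_0$ in the PDE evaluated along $\mathcal R$: passing to the limit gives
\[
\tfrac12\sigma^2\,\overline u_{yy}(z_0,y) = \rho\,u(z_0,y) - (a-\theta y)\,u_y(z_0,y) - (\delta-g-y)\,u_z(z_0,y) - e^{z_0}C'(e^{z_0}),
\]
for $y\in(y_0-\varepsilon,y_0+\varepsilon)$, where $\overline u_{yy}$ is the continuous extension of $u_{yy}$ to $\overline{\mathcal C}$. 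But on the stopping-region side we have $u_y(z_0,y)=0$, $u_z(z_0,y)=\kappa e^{z_0}$, $u(z_0,y)=\kappa e^{z_0}$, and the left-hand side equals $0$ by continuity of $u_{yy}$ across the boundary at $(z_0,y)$. Substituting, the identity collapses to $0 = \rho\kappa e^{z_0} - (\delta-g-y)\kappa e^{z_0} - e^{z_0}C'(e^{z_0})$, i.e. $C'(e^{z_0}) + \kappa(\delta-g-y-\rho) = 0$ for \emph{every} $y$ in an interval, which is impossible since the left-hand side is strictly monotone (indeed affine and nonconstant) in $y$. This contradiction forces $\hat y(z_0-)=\hat y(z_0)$, and combined with right-continuity we get that $\hat y$ is continuous.

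The main obstacle, and the point requiring the most care, is justifying that one may actually take the limit $z\uparrow z_0$ inside the PDE and obtain the displayed boundary identity with the continuous extension $\overline u_{yy}$ — that is, ensuring the second-order term behaves well right up to the boundary. This is precisely where Proposition \ref{cor:C2} ($u\in C^{1,2}(\overline{\mathcal C})$) is essential, since it guarantees $u_{yy}$ has a continuous extension $\overline u_{yy}$ to the closed region and hence the PDE holds in the limit on $\{z_0\}\times(y_0-\varepsilon,y_0+\varepsilon)\subset\partial\mathcal C\cap\overline{\mathcal C}$; one also needs to know the other terms ($u$, $u_y$, $u_z$, and the continuous data) converge, which follows from $u\in C^1(\mathbb R^2)$ (Proposition \ref{prop:valueOSC1}). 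A secondary technical point is arranging the geometry so that the rectangle $\mathcal R$ genuinely sits inside $\mathcal C$ while $(z_0,y_0)$ sits strictly inside $\mathcal S$; this is where the hypothesised jump $\hat y(z_0-)<\hat y(z_0)$, together with monotonicity of $\hat y$, is used — it is exactly the gap that provides room for both the rectangle and the stopping segment. An alternative, slightly softer route to the same contradiction would be to invoke $u_z(z,\hat y(z))=\kappa e^z$ and $u_y(z,\hat y(z))=0$ (the smooth-fit conditions in \eqref{FBP}) together with the PDE to derive a relation that cannot hold simultaneously with $(z_0,y_0)\in\mathcal S$ for a whole segment of $y$-values; either way the strict monotonicity in $y$ of $C'(e^{z})+\kappa(\delta-g-y-\rho)$ supplies the contradiction.
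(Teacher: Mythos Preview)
Your contradiction scheme (assume a jump, build a rectangle in $\mathcal C$ abutting a vertical segment in $\mathcal S$, and pass to the limit $z\uparrow z_0$ in the PDE) is exactly the paper's. The gap lies in the assertion $\overline u_{yy}(z_0,y)=0$ ``by continuity of $u_{yy}$ across the boundary.'' First, Proposition~\ref{cor:C2} gives $u\in C^{1,2}(\overline{\mathcal C})$ only for $\overline{\mathcal C}:=\{y\ge\hat y(z)\}$, and your segment $\{z_0\}\times(y_0-\varepsilon,y_0+\varepsilon)$ sits in $\{y<\hat y(z_0)\}$, hence \emph{outside} that set; so the proposition, as stated, does not apply. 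Second and more fundamentally, even if one extends $u_{yy}$ continuously from $\mathcal C$ to the topological closure of $\mathcal C$ (the PDE argument would give this), nothing forces that one-sided limit to coincide with the value $0$ computed on the stopping side: that would be a \emph{second-order} smooth fit, which is neither expected nor proved. Indeed, the limiting PDE you wrote down yields $\tfrac12\sigma^2\,\overline u_{yy}(z_0,y)=-e^{z_0}\big(C'(e^{z_0})+\kappa(\delta-g-y-\rho)\big)$, which is strictly \emph{negative} on the segment by Proposition~\ref{prop:preliminarybdy}-(i); so $\overline u_{yy}(z_0,y)=0$ cannot be read off from regularity alone.

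The paper avoids this by working weakly: multiply the PDE on $(z_1,z_o)\times(y_1,y_2)$ by $\psi\in C_c^\infty(y_1,y_2)$, integrate in $y$, and push both $y$-derivatives onto $\psi$ by parts. Then only $u$ and $u_z$ (hence $C^1$ regularity, Proposition~\ref{prop:valueOSC1}) are needed to pass to the limit $z\uparrow z_o$; the second-order contribution becomes $\int\kappa e^{z_o}\big(\tfrac12\sigma^2\psi''-\partial_y((a-\theta y)\psi)\big)\,dy=0$ by compact support, and one is left with $\int e^{z_o}\big(C'(e^{z_o})+\kappa(\delta-g-\rho-y)\big)\psi(y)\,dy=0$, contradicting strict positivity of the integrand. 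Your route can be repaired by essentially the same integration: from the formula for $\overline u_{yy}(z_0,\cdot)$ obtained in the limit, integrate over $(y_0-\varepsilon,y_0+\varepsilon)$ and use that $\lim_{z\uparrow z_0}\int_{y_0-\varepsilon}^{y_0+\varepsilon}u_{yy}(z,\xi)\,d\xi=\lim_{z\uparrow z_0}\big(u_y(z,y_0+\varepsilon)-u_y(z,y_0-\varepsilon)\big)=0$ by $u\in C^1(\mathbb R^2)$, which contradicts $\overline u_{yy}(z_0,\cdot)<0$.
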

\begin{proof}

Thanks to Proposition \ref{prop:preliminarybdy} it suffices to prove that $\hat{y}(\,\cdot\,)$ is left-continuous. We do that by employing a contradiction scheme inspired by that in \cite{DeA2015}. This is possible since the process $Z$ is of bounded variation and therefore it behaves as a ``time-like" variable. Assume that there exists some $z_o\in \mathbb{R}$ such that $\hat{y}(z_o-) < \hat{y}(z_o)$, where we have set $\hat{y}(z_o-):=\lim_{\varepsilon \downarrow 0}\hat{y}(z_o-\varepsilon)$. Such limit exists by monotonicity of $\hat{y}(\,\cdot\,)$. Then we can choose $y_1,y_2$ such that $\hat{y}(z_o-)<y_1<y_2<\hat{y}(z_o)$, $z_1 < z_o$ and define a rectangular domain with vertices $(z_o,y_1)$, $(z_o,y_2)$, $(z_1,y_1)$, $(z_2,y_2)$. 

Noticing that $(z_1,z_o) \times (y_1,y_2) \subset \mathcal{C}$ and $\{z_o\} \times [y_1,y_2] \subset \mathcal{S}$, from \eqref{FBP} $u$ solves
\begin{align}
\label{CauchyP}
\left\{
\begin{array}{ll}
\big(\mathbb{L}_{Z,Y} - \rho\big)u(z,y) = -e^{z}C'(e^z), & (z,y)\in (z_1,z_o) \times (y_1,y_2), \\[+6pt]
u(z_o,y) = \kappa e^{z_o}, & y \in [y_1,y_2], 
\end{array}
\right.
\end{align}

Denote by $C^{\infty}_c(y_1,y_2)$ the set of functions with infinitely many continuous derivatives and compact support in $(y_1,y_2)$. Pick an arbitrary $\psi \geq 0$ from $C^{\infty}_c(y_1,y_2)$ such that $\int_{y_1}^{y_2} \psi(y)dy>0$, multiply both sides of the first equation in \eqref{CauchyP} by $\psi$, and integrate over $(y_1,y_2)$ so to obtain
\begin{equation}
\label{latter}
-\int_{y_1}^{y_2} e^{z}C'(e^z) \psi(y) dy = \int_{y_1}^{y_2} \big(\mathbb{L}_{Z,Y} - \rho\big)u(z,y)\psi(y) dy, \quad z\in [z_1,z_o).
\end{equation}
Then recalling \eqref{generator}, integrating by parts with respect to $y$ the terms in the right-hand side of \eqref{latter} which involve the first and the second derivatives in the $y$-direction, one finds
\beq
\label{conbd1}
-\int_{y_1}^{y_2} e^{z}C'(e^z) \psi(y) dy = \int_{y_1}^{y_2} \big(\mathbb{L}^*_{Y}\psi\big)(y)u(z,y)dy + \int_{y_1}^{y_2} \big(\delta - g - y\big)\psi(y)u_z(z,y)dy, 
\eeq
for $z\in [z_1,z_o)$, and where $\mathbb{L}^*_{Y}$ is the second-order differential operator which acting on a function $f \in C^2(\mathbb{R})$ yields
$$\mathbb{L}^*_{Y}f:=\frac{1}{2}\sigma^2 \frac{\partial^2 f}{\partial y^2}- \frac{\partial}{\partial y}\big((a - \theta y)f\big) - \rho f.$$

Taking limits as $z \uparrow z_o$ on both sides of \eqref{conbd1} above, invoking dominated convergence theorem and recalling continuity of $u_z$ on $\mathbb{R}^2$ (cf.\ Proposition \ref{prop:valueOSC1}) one has
\beq
\label{conbd2}
-\int_{y_1}^{y_2} e^{z_o}C'(e^{z_o}) \psi(y) dy = \int_{y_1}^{y_2} \big(\mathbb{L}^*_{Y}\psi\big)(y)u(z_o,y)dy + \int_{y_1}^{y_2} \big(\delta - g - y\big)\psi(y)u_z(z_o,y)dy.
\eeq
Since $u_z(z_o,y) = \kappa e^{z_o} = u(z_o,y)$ for any $y \in [y_1,y_2]$, then rearranging terms in \eqref{conbd2} gives
\beq
\label{conbd3}
-\int_{y_1}^{y_2} e^{z_o}\big(\kappa(\delta - g - \rho - y) + C'(e^{z_o})\big)\psi(y)dy = \int_{y_1}^{y_2} \kappa e^{z_o}\Big[\frac{1}{2}\sigma^2 \frac{\partial^2 \psi}{\partial y^2}(y) - \frac{\partial}{\partial y}\big((a - \theta y)\psi\big)(y)\Big] dy.\nonumber
\eeq
Because $y_2 < \hat{y}(z_o) \leq \delta - g - \rho + \frac{C'(e^{z_o})}{\kappa}$ (cf.\ Proposition \ref{prop:preliminarybdy}) and $\psi \geq 0$, the left-hand side of the last equation is strictly negative. On the other hand, an integration reveals that the right-hand side of the latter equals zero because $\psi \in C^{\infty}_c(y_1,y_2)$. Hence we reach a contradiction and the proof is complete.
\end{proof}

The next result provides the integral representation of the value function $u$ of problem \eqref{valueOS}. Such representation will then allow us to obtain an integral equation for the stopping boundary $\hat{y}$ (cf.\ Theorem \ref{th:integral} below).
\begin{proposition}
\label{prop:representingu}
Let $\hat{y}(\,\cdot\,)$ be the stopping boundary of \eqref{bdy}. Then for any $(z,y) \in \mathbb{R}^2$ the value function $u$ of \eqref{valueOS} can be written as
\begin{align}
\label{OSrepresent}
&u(z,y)=\mathbb{E}_{(z,y)}\bigg[\int_{0}^{\infty}e^{-\rho s + Z_s}C'(e^{Z_s})\mathds{1}_{\{Y_s > \hat{y}(Z_s)\}} ds\bigg] \\
& \hspace{1.7cm}-\mathbb{E}_{(z,y)}\bigg[\int_{0}^{\infty}e^{-\rho s + Z_s}\kappa \big(\delta-g-\rho-Y_s\big)\mathds{1}_{\{Y_s \leq \hat{y}(Z_s)\}} ds\bigg].\nonumber
\end{align}
\end{proposition}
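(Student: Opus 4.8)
The plan is to derive the representation \eqref{OSrepresent} as an instance of the Itô–Tanaka / Dynkin formula applied to the smooth-enough function $u$, exploiting the regularity established in Propositions \ref{prop:valueOSC1} and \ref{cor:C2}. First I would fix $(z,y)\in\mathbb{R}^2$ and apply a generalised Itô formula to the process $e^{-\rho t}u(Z_t,Y_t)$. Since $u\in C^1(\mathbb{R}^2)$ by Proposition \ref{prop:valueOSC1} and $u\in C^{1,2}(\overline{\mathcal{C}})$ by Proposition \ref{cor:C2}, while $u(z,y)=\kappa e^z$ on $\mathcal{S}$ (so $u$ is also $C^{1,2}$ there trivially), the second $y$-derivative $u_{yy}$ exists and is locally bounded off the boundary $\{y=\hat y(z)\}$, which is a Lebesgue-null set by Proposition \ref{prop:bdcont}. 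Because the $Z$-component is of bounded variation (it is a time integral, cf.\ \eqref{def:Z}), no local-time term in $z$ appears; and because $Y$ spends zero time on the null set $\{y=\hat y(z)\}$ (the density \eqref{transdensity} is bounded on compacts, so occupation of a null set has zero expected Lebesgue measure), the ``missing'' regularity across the boundary does not contribute. Hence for $t\geq 0$,
\begin{align}
\label{Itou}
e^{-\rho t}u(Z_t,Y_t) &= u(z,y) + \int_0^t e^{-\rho s}\big(\mathbb{L}_{Z,Y}-\rho\big)u(Z_s,Y_s)\,ds + \sigma\int_0^t e^{-\rho s}u_y(Z_s,Y_s)\,dW_s,
\end{align}
where on $\mathcal{C}$ one has $(\mathbb{L}_{Z,Y}-\rho)u = -e^{Z_s}C'(e^{Z_s})$ from \eqref{FBP}, and on $\mathcal{S}$, using $u(z,y)=\kappa e^z$, a direct computation gives $(\mathbb{L}_{Z,Y}-\rho)u = \kappa e^z(\delta-g-y-\rho)$. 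Thus the drift integrand equals $-e^{Z_s}C'(e^{Z_s})\mathds{1}_{\{Y_s>\hat y(Z_s)\}} + \kappa e^{Z_s}(\delta-g-Y_s-\rho)\mathds{1}_{\{Y_s\leq\hat y(Z_s)\}}$.

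Next I would take expectations in \eqref{Itou}, argue the stochastic integral is a true martingale (its integrand $\sigma e^{-\rho s}u_y(Z_s,Y_s)$ is square-integrable over $[0,t]$ thanks to the growth bounds on $u_y$ inherited from Proposition \ref{prop:valuefinite}-type estimates and the exponential moment control on $Z,Y$ used already in the proof of Proposition \ref{continuityOS}), and then let $t\uparrow\infty$. On the left-hand side, $e^{-\rho t}u(Z_t,Y_t)\to 0$ in $L^1$: indeed $0\leq u(z,y)\leq\kappa e^z$, so $0\leq e^{-\rho t}u(Z_t,Y_t)\leq\kappa e^{-\rho t+Z_t}$, and the family $\{e^{-\rho\tau+Z_\tau}\}$ is uniformly integrable (it is of class (D), cf.\ the uniform integrability noted after \eqref{valueOS-2}, together with \eqref{limitZ}), so dominated/uniform-integrability convergence applies. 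On the right-hand side, the drift integral converges by dominated convergence, its absolute value being dominated by the integrable quantity appearing in \eqref{ui1} of Lemma \ref{lem:limitZeui}. Rearranging yields exactly \eqref{OSrepresent}.

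The main obstacle I anticipate is rigorously justifying the Itô formula \eqref{Itou} despite $u$ failing to be globally $C^{1,2}$: one must show that the local time of $Y$ on the boundary curve $\{y=\hat y(z)\}$ contributes nothing. The cleanest route is to note that $u_y$ is continuous across the boundary (smooth fit, \eqref{FBP}), so $u$ is $C^1$ globally with $u_y$ absolutely continuous in $y$ on every vertical line (with density $\overline u_{yy}$, the continuous extension from Proposition \ref{cor:C2}); then an approximation argument — mollifying $u$ in the $y$-variable, applying the classical Itô formula, and passing to the limit using that $\overline u_{yy}$ is locally bounded and that $Y$ has an occupation density — gives \eqref{Itou} with no boundary term. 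Alternatively, one can localise: apply Dynkin's formula on the open set $\mathcal{C}$ up to the exit time $\tau^\star$, use $u=\kappa e^z$ thereafter and the explicit generator computation on $\mathcal{S}$, and stitch the pieces together via the strong Markov property; this avoids local time entirely at the cost of a slightly longer bookkeeping. Either way, once \eqref{Itou} is in hand the rest is routine convergence analysis already carried out in spirit in the proofs of Lemma \ref{lem:limitZeui} and Proposition \ref{continuityOS}.
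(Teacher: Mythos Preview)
Your proposal is essentially the paper's approach: apply a generalised It\^o/Dynkin formula to $e^{-\rho t}u(Z_t,Y_t)$, identify $(\mathbb{L}_{Z,Y}-\rho)u$ piecewise via the free-boundary problem \eqref{FBP}, and pass to the limit. The paper invokes the weak It\^o lemma of Bensoussan--Lions (using only $u\in C^1$ and $u_{yy}\in L^\infty_{\mathrm{loc}}$) rather than your mollification/local-time discussion, but the substance is the same.

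The one point where you diverge is the handling of the stochastic integral. You propose to show it is a true martingale via ``growth bounds on $u_y$ inherited from Proposition \ref{prop:valuefinite}-type estimates''. That proposition bounds $v$, not $u_y$, and no global growth estimate for $u_y$ is established anywhere in the paper; so this step is not actually justified by the available results. The paper sidesteps the issue by localising in space: it stops at $\tau_R=\inf\{t\geq 0:|Y_t|\geq R\text{ or }|Z_t|\geq R\}$, so that on $[0,\tau_R\wedge T]$ the integrand $e^{-\rho s}u_y(Z_s,Y_s)$ is bounded and the stochastic integral has mean zero trivially. The limits $R\uparrow\infty$ and then $T\uparrow\infty$ are then carried out via a dedicated lemma (Lemma \ref{lemm:limits}), which handles the term $\mathbb{E}_{(z,y)}[e^{-\rho(\tau_R\wedge T)}u(Z_{\tau_R\wedge T},Y_{\tau_R\wedge T})]$ by the bound $u\leq\kappa e^z$ together with the integral representation of $e^{Z_s}$ and H\"older estimates --- rather than by uniform integrability of $\{e^{-\rho\tau+Z_\tau}\}$ alone, which would not suffice since $\tau_R\wedge T$ is not an arbitrary stopping time converging monotonically. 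This localisation is the cleaner route here; your direct-martingale argument would need an independent bound on $u_y$ that you have not supplied.
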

\begin{proof}
The proof is based on an application of a generalised version of It\^o's lemma. Let $R>0$ and define $\tau_R:=\inf\{t\geq 0:\, |Y_t| \geq R\,\,\text{or}\,\,|Z_t| \geq R\}$ under $\mathbb{P}_{(z,y)}$. Since $u \in C^1(\mathbb{R}^2)$ and $u_{yy} \in L^{\infty}_{loc}(\mathbb{R}^2)$ (cf.\ Propositions \ref{prop:valueOSC1} and \ref{cor:C2}), we can apply a weak version of It\^o's lemma (see, e.g., \cite{BensoussanLions}, Lemma $8.1$ and Theorem $8.5$, pp.\ 183--186) up to the stopping time $\tau_R \wedge T$, for some $T>0$, so to obtain
\begin{align}
\label{repru-1}
&u(z,y)=\mathbb{E}_{(z,y)}\bigg[e^{-\rho(\tau_R \wedge T)}u(Z_{\tau_R \wedge T},Y_{\tau_R \wedge T}) - \int_{0}^{\tau_R \wedge T}e^{-\rho s} \big(\mathbb{L}_{Z,Y} - \rho\big)u(Z_s,Y_s) ds\bigg].
\end{align}
The right-hand side of \eqref{repru-1} is well defined, since the transition probability of $(Z,Y)$ is absolutely continuous with respect to the Lebesgue measure (cf. Lemma \ref{lem:YZ}) and $(\mathbb{L}_{Z,Y} - \rho)u$ is defined up to a set of zero Lebesgue measure.

Since $u$ solves the free-boundary problem \eqref{FBP} we have
$$\big(\mathbb{L}_{Z,Y} - \rho\big)u(z,y)= - e^{z}C'(e^{z})\mathds{1}_{\{y > \hat{y}(z)\}} + \kappa(\delta-g-\rho-y)e^{z}\mathds{1}_{\{y \leq \hat{y}(z)\}} \qquad \text{for \,\,a.a.}\,\,(z,y)\in \mathbb{R}^2,$$
and using again that the transition probability of $(Z,Y)$ is absolutely continuous with respect to the Lebesgue measure (cf.\ \eqref{transdensity}) equation \eqref{repru-1} rewrites as
\begin{align}
\label{repru-2}
&u(z,y)=\mathbb{E}_{(z,y)}\bigg[e^{-\rho(\tau_R \wedge T)}u(Z_{\tau_R \wedge T},Y_{\tau_R \wedge T}) + \int_{0}^{\tau_R\wedge T}e^{-\rho s + Z_s}C'(e^{Z_s})\mathds{1}_{\{Y_s > \hat{y}(Z_s)\}} ds\bigg] \\
& \hspace{1.7cm}-\mathbb{E}_{(z,y)}\bigg[\int_{0}^{\tau_R \wedge T}e^{-\rho s + Z_s}\kappa \big(\delta-g-\rho-Y_s\big)\mathds{1}_{\{Y_s \leq \hat{y}(Z_s)\}} ds\bigg].\nonumber
\end{align}
By taking limits as $R\uparrow \infty$ and $T\uparrow \infty$ in \eqref{repru-2}, and employing Lemma \ref{lemm:limits} of Appendix \ref{auxiliaryres}, we conclude that \eqref{OSrepresent} holds true.
\end{proof}

Setting
\begin{equation}
\label{def:H}
H_{\hat{y}}(z,y):=e^{z}C'(e^z)\mathds{1}_{\{y > \hat{y}(z)\}} - \kappa(\delta - g - \rho - y)e^z \mathds{1}_{\{y \leq \hat{y}(z)\}}, \quad (z,y) \in \mathbb{R}^2,
\end{equation}
it follows that \eqref{OSrepresent} takes the form
\beq
\label{OSrepresent-new}
u(z,y)=\mathbb{E}_{(z,y)}\bigg[\int_0^{\infty} e^{-\rho s} H_{\hat{y}}(Z_s,Y_s) ds\bigg].
\eeq
Since $|H_{\hat{y}}(z,y)| \leq e^{z}C'(e^z) + \kappa|\delta - g - \rho - y|e^z$, $(z,y)\in \mathbb{R}^2$, it is a consequence of \eqref{ui1} in Lemma \ref{lem:limitZeui} that  $H_{\hat{y}}(Z_s,Y_s) \in L^1(\mathbb{P}_{(z,y)} \otimes e^{-\rho s}ds)$. This fact, together with the strong Markov property and standard arguments based on conditional expectations applied to representation formula \eqref{OSrepresent} (equivalently \eqref{OSrepresent-new}) allow to conclude that for any stopping time $\tau \in \mathcal{T}$ and $(z,y) \in \mathbb{R}^2$
\begin{align}
\label{subharmonic0}
& e^{-\rho \tau}u(Z^{z,y}_{\tau},Y^y_{\tau}) + \int_0^{\tau} e^{-\rho s}H_{\hat{y}}(Z^{z,y}_s,Y^y_s)ds = \mathbb{E}_{(z,y)}\bigg[\int_0^{\infty} e^{-\rho s}H_{\hat{y}}(Z_s,Y_s)ds\Big|\mathcal{F}_{\tau}\bigg].
\end{align}
In particular,
\begin{align}
\label{subharmonic1}
&\Big\{e^{-\rho t}u(Z^{z,y}_{t},Y^y_{t}) + \int_0^{t} e^{-\rho s}H_{\hat{y}}(Z^{z,y}_s,Y^y_s)ds, \,t\geq 0\Big\}\,\,\text{is a uniformly integrable $\mathcal{F}_t$-martingale},
\end{align}
and for any stopping time $\tau \in \mathcal{T}$
\begin{align}
\label{subharmonic3}
& e^{-\rho \tau}u(Z^{z,y}_{\tau},Y^y_{\tau}) \leq \mathbb{E}_{(z,y)}\bigg[\int_0^{\infty} e^{-\rho s}\big|H_{\hat{y}}(Z_s,Y_s)\big|ds\Big|\mathcal{F}_{\tau}\bigg], \quad (z,y) \in \mathbb{R}^2.
\end{align}
Hence the family of random variables $\{e^{-\rho \tau}u(Z^{z,y}_{\tau},Y^y_{\tau}),\,\tau \in \mathcal{T}\}$ is uniformly integrable.

We now continue our analysis by providing the integral characterisation of $\hat{y}(\,\cdot\,)$.
\begin{theorem}
\label{th:integral}
Recall \eqref{transdensity} and let
\begin{eqnarray*}
\mathcal{M}& \hspace{-0.25cm}:= \hspace{-0.25cm} & \Big\{f:\mathbb{R} \mapsto \mathbb{R}\,\,\text{continuous, nondecreasing, dominated from above by} \nonumber \\ 
&& \hspace{4cm} \vartheta(z):=\frac{C'(e^z)}{\kappa} + \delta - g - \rho\Big\}. 
\end{eqnarray*}

Then the boundary $\hat{y}(\,\cdot\,)$ is the unique function in $\mathcal{M}$ solving the nonlinear integral equation
\begin{align}
\label{inteq-2}
& \kappa e^z = \int_0^{\infty}e^{-\rho t}\Big(\int_{\mathbb{R}^2} e^{v}C'(e^v) \mathds{1}_{\{u > f(v)\}} p_t(z,f(z);v,u)du dv\Big) dt \nonumber \\
& - \int_0^{\infty}e^{-\rho t}\Big(\int_{\mathbb{R}^2} \kappa e^{v}(\delta - g - \rho - u) \mathds{1}_{\{u \leq f(v)\}} p_t(z,f(z);v,u)du dv\Big) dt.
\end{align}
\end{theorem}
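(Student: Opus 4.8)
The plan is to prove the two halves separately: first that $\hat y$ itself lies in $\mathcal{M}$ and solves \eqref{inteq-2}, and then that it is the only such function. Membership $\hat y\in\mathcal{M}$ is immediate from Propositions \ref{prop:preliminarybdy} and \ref{prop:bdcont} (continuity, monotonicity, and the bound $\hat y\le\vartheta$). That $\hat y$ solves \eqref{inteq-2} follows by evaluating the integral representation \eqref{OSrepresent-new} at the point $(z,\hat y(z))$: since $\hat y$ is continuous and $\mathcal{S}$ is closed one has $(z,\hat y(z))\in\mathcal{S}$, so $u(z,\hat y(z))=\kappa e^z$, and writing the expectation in \eqref{OSrepresent-new} through the transition density $p_t$ of Lemma \ref{lem:YZ} and expanding $H_{\hat y}$ via \eqref{def:H} produces precisely \eqref{inteq-2} with $f=\hat y$. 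For uniqueness, let $f\in\mathcal{M}$ solve \eqref{inteq-2} and set $u^f(z,y):=\mathbb{E}_{(z,y)}[\int_0^\infty e^{-\rho s}H_f(Z_s,Y_s)\,ds]$; this is finite because $|H_f(z,y)|\le e^zC'(e^z)+\kappa|\delta-g-\rho-y|e^z$ and \eqref{ui1} holds, and rewriting \eqref{inteq-2} through $p_t$ shows that the integral equation is exactly the identity $u^f(z,f(z))=\kappa e^z$ for all $z$. The rest proceeds in five steps.

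\emph{Step 1 (below $f$):} I would show $u^f(z,y)=\kappa e^z$ for $y\le f(z)$. Applying the strong Markov property at $\sigma_f:=\inf\{t\ge0:\,Y_t>f(Z_t)\}$, on $[0,\sigma_f)$ one has $H_f(Z_s,Y_s)=-\kappa(\delta-g-\rho-Y_s)e^{Z_s}$; path continuity of $(Z,Y)$ and continuity of $f$ force $Y_{\sigma_f}=f(Z_{\sigma_f})$ on $\{\sigma_f<\infty\}$, so $u^f(Z_{\sigma_f},Y_{\sigma_f})=\kappa e^{Z_{\sigma_f}}$ there by the integral equation, and the integration-by-parts identity \eqref{byparts} (which via \eqref{limitZ} also covers $\{\sigma_f=\infty\}$) collapses the whole expression to $\kappa e^z$. \emph{Step 3 (lower bound by $u$):} for $y>f(z)$, the strong Markov property at $\tau_f:=\inf\{t\ge0:\,Y_t\le f(Z_t)\}$, together with Step 1 used on the boundary, gives $u^f(z,y)=\mathbb{E}_{(z,y)}[\int_0^{\tau_f}e^{-\rho s+Z_s}C'(e^{Z_s})\,ds+\kappa e^{-\rho\tau_f+Z_{\tau_f}}]$, i.e.\ $u^f(z,y)$ equals the value in \eqref{valueOS} of the admissible stopping time $\tau_f$; hence $u^f(z,y)\ge u(z,y)$, and with Step 1 ($u^f=\kappa e^z\ge u$ below $f$) this gives $u^f\ge u$ on $\mathbb{R}^2$.

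\emph{Step 2 (upper bound above $f$ — the obstacle):} I would prove $u^f(z,y)\le\kappa e^z$ for $y>f(z)$. By an interior Dirichlet-problem argument as in Proposition \ref{cor:C2}, $u^f\in C^{1,2}(\{y>f(z)\})$ with $(\mathbb{L}_{Z,Y}-\rho)u^f=-e^zC'(e^z)$ there, while $u^f=\kappa e^z$ on $\{y\le f(z)\}$ with matching boundary values by Step 1; so $w:=\kappa e^z-u^f$ satisfies $w\equiv0$ on $\{y\le f(z)\}$ and $(\mathbb{L}_{Z,Y}-\rho)w=\kappa e^z(\vartheta(z)-y)$ on $\{y>f(z)\}$. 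The right-hand side changes sign across $\{y=\vartheta(z)\}$, so no single maximum/minimum principle applies on the whole set; instead I would treat the strip $\{f(z)<y\le\vartheta(z)\}$ (where $\kappa e^z$ is a subsolution of the inhomogeneous equation) and the region $\{y>\vartheta(z)\}$ (where $\kappa e^z$ is a supersolution) separately, glue them along $\{y=\vartheta(z)\}$, and use growth estimates of the type obtained in the proof of Proposition \ref{continuityOS} to control $w$ at infinity. I expect this step to be the main difficulty.

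\emph{Step 4 (identification):} since $M^f_t:=e^{-\rho t}u^f(Z_t,Y_t)+\int_0^t e^{-\rho s}H_f(Z_s,Y_s)\,ds$ is a uniformly integrable martingale and $C'(e^z)+\kappa(\delta-g-\rho-y)=\kappa(\vartheta(z)-y)\ge0$ on $\{y\le f(z)\}$ (because $f\le\vartheta$), the process $L_t:=e^{-\rho t}u^f(Z_t,Y_t)+\int_0^t e^{-\rho s+Z_s}C'(e^{Z_s})\,ds$ is a submartingale; optional stopping together with Step 2 (and the integrability bounds) gives, for every $\tau\in\mathcal{T}$, $u^f(z,y)\le\mathbb{E}_{(z,y)}[\kappa e^{-\rho\tau+Z_\tau}+\int_0^\tau e^{-\rho s+Z_s}C'(e^{Z_s})\,ds]$, and minimising over $\tau$ yields $u^f\le u$, hence $u^f=u$ by Step 3. \emph{Step 5 (conclusion):} from $u^f=u$ and \eqref{cont}--\eqref{contstopp}, $\{u^f<\kappa e^z\}=\mathcal{C}=\{y>\hat y(z)\}$, and since $\{y\le f(z)\}\subseteq\{u^f=\kappa e^z\}$ by Step 1 one gets $f\le\hat y$; if $f(z_0)<\hat y(z_0)$ for some $z_0$, continuity of $f$ and $\hat y$ gives a nonempty open rectangle $\mathcal{R}\subset\{f(z)<y<\hat y(z)\}$ on which $u^f$ simultaneously solves $(\mathbb{L}_{Z,Y}-\rho)u^f=-e^zC'(e^z)$ (as $\mathcal{R}\subset\{y>f(z)\}$) and equals $u=\kappa e^z$ (as $\mathcal{R}\subset\mathcal{S}$), forcing $y=\vartheta(z)$ throughout $\mathcal{R}$, a contradiction. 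Hence $f=\hat y$.
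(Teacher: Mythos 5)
Your existence argument, Step 1 (the identity $u^f\equiv\kappa e^z$ below $f$), and Step 3 (the lower bound $u^f\ge u$, obtained by recognising $u^f$ above $f$ as the payoff of the admissible stopping time $\tau_f$) are all correct and in fact reproduce the content of the paper's Lemma \ref{lemm:uleqw}. The genuine gap is your Step 2, which you flag yourself but which is not merely a ``main difficulty'' that needs work: as proposed, it cannot be made to run. On the strip $\{f(z)<y\le\vartheta(z)\}$ you have $(\mathbb{L}_{Z,Y}-\rho)(\kappa e^z-u^f)\ge0$, i.e.\ $\kappa e^z-u^f$ is a \emph{sub}solution there, and the maximum principle for subsolutions bounds this quantity from \emph{above} (no positive interior max), not from below. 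What you need is a lower bound $\kappa e^z-u^f\ge0$, and the only usable boundary data you have is $\kappa e^z-u^f=0$ on $\{y=f(z)\}$; on the interface $\{y=\vartheta(z)\}$ you know nothing a priori. Gluing a subsolution estimate on the strip with a supersolution estimate above the strip does not close this: neither side produces the sign of $\kappa e^z-u^f$ on the common interface, and there is no obvious bootstrap. Worse, the inequality $u^f\le\kappa e^z$ above $f$ is essentially equivalent to the optimality content of the conclusion: if $f<\hat y$ on some interval, Step 3 already gives $u^f\ge u=\kappa e^z$ on the strip between them, so Step 2 would force $u^f\equiv\kappa e^z$ there, i.e.\ $\tau_f$ would have to be optimal starting from inside $\mathcal{S}$ — which is precisely the nontrivial fact that has to be ruled out, not assumed.

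The paper takes a different and more economical route that avoids ever proving $u^f\le\kappa e^z$, or $u^f=u$. It stops after what you call Steps 1 and 3 (its Lemma \ref{lemm:uleqw}: $u\le w$ on all of $\mathbb{R}^2$, where $w$ is your $u^f$), and then runs \emph{two} local probabilistic contradiction arguments. To show $\hat y\le b$, it starts below both curves, applies optional stopping to the two uniformly integrable martingales associated with $u$ and $w$ at the first entry time $\sigma$ into $\{Y\ge\hat y(Z)\}$, subtracts, and uses $u=w=\kappa e^{z_o}$ at the starting point to reduce everything to an expectation of $e^{Z_s}\big(C'(e^{Z_s})+\kappa(\delta-g-\rho-Y_s)\big)\mathds{1}_{\{b(Z_s)<Y_s<\hat y(Z_s)\}}$ over $[0,\sigma]$; since $\hat y\le\vartheta$, this integrand is strictly positive on the strip, giving a strict sign that contradicts $u\le w$. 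A symmetric argument, now stopping at the first entry into $\{Y\le\hat y(Z)\}$ and using the already-proved inclusion $\hat y\le b$, gives $\hat y\ge b$. The same positivity of $C'(e^z)+\kappa(\delta-g-\rho-y)$ below $\vartheta$ that you are trying to harness through a PDE sign argument is thus used, but pointwise inside an integral after differencing two martingale identities — a localisation that sidesteps the global comparison you cannot establish. If you want to rescue your outline, replace Steps 2, 4 and 5 by these two local contradiction arguments; as written, Step 2 is a hole that Steps 4 and 5 inherit.
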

\begin{proof}
The proof is organised in several steps. It follows arguments similar to those recently employed in \cite{DeAFeFe} for a two-dimensional elliptic problem, and it extends to our two-dimensional setting the arguments presented in \cite{PeskShir}, Section 25.\vspace{0.15cm}

\emph{Step 1.} To show existence of a solution to \eqref{inteq-2} it suffices to show that $\hat{y}$ of \eqref{bdy} solves it. To see this we notice that $\hat{y} \in \mathcal{M}$ by Propositions \ref{prop:preliminarybdy} and \ref{prop:bdcont}. Moreover, evaluating both sides of \eqref{OSrepresent} at $y = \hat{y}(z)$, $z \in \mathbb{R}$, one finds \eqref{inteq-2}, upon using that $u(z,\hat{y}(z))=\kappa e^z$ and expressing the expected value as an integral with respect to the probability density function \eqref{transdensity} of the process $(Z,Y)$.
\vspace{0.25cm}

We now move to proving uniqueness and we argue by a contradiction scheme by supposing that there exists another function $b \in \mathcal{M}$ solving \eqref{inteq-2}. In the following steps we will show that actually $\hat{y}(z) \leq b(z)$ for any $z \in \mathbb{R}$ (cf.\ \emph{Step 2}), and also that $\hat{y}(z) \geq b(z)$ for any $z \in \mathbb{R}$ (cf.\ \emph{Step 3}). To accomplish that for $(z,y) \in \mathbb{R}^2$ we define 
\begin{equation}
\label{def:w}
w(z,y):=\mathbb{E}_{(z,y)}\bigg[\int_0^{\infty}e^{-\rho s} H_{b}(Z_s,Y_s)ds\bigg],
\end{equation}
where 
\beq
\label{defHb}
H_{b}(z,y):=e^{z}C'(e^z)\mathds{1}_{\{y > b(z)\}} - \kappa (\delta - g - \rho - y)e^z \mathds{1}_{\{y \leq b(z)\}}, \quad (z,y) \in \mathbb{R}^2.
\eeq
Noticing that $|H_b(z,y)| \leq e^{z}C'(e^z) + \kappa e^z|\delta - g - \rho - y|$, by equation \eqref{ui1} of Lemma \ref{lem:limitZeui} one has that  $H_{b}(Z_s,Y_s) \in L^1(\mathbb{P}_{(z,y)} \otimes e^{-\rho s}ds)$ under Assumption \ref{ass:rho}. Due to this fact one can then verify that 
\begin{align}
\label{subharmonicw}
&\Big\{e^{-\rho t}w(Z^{z,y}_{t},Y^y_{t}) + \int_0^{t} e^{-\rho s}H_{b}(Z^{z,y}_s,Y^y_s)ds, \,t\geq 0\Big\}\,\,\text{is an $\mathcal{F}_t$-martingale.}
\end{align}
In particular, it is an $\mathcal{F}_t$-uniformly integrable martingale. Moreover, for any stopping time $\tau \in \mathcal{T}$
\begin{align}
\label{subharmonicwst}
& e^{-\rho \tau}w(Z^{z,y}_{\tau},Y^{y}_{\tau}) \leq \mathbb{E}_{(z,y)}\bigg[\int_0^{\infty} e^{-\rho s}\big|H_{b}(Z_s,Y_s)\big|ds\Big|\mathcal{F}_{\tau}\bigg],
\end{align}
and therefore the family of random variables $\{e^{-\rho \tau}w(Z^{z,y}_{\tau},Y^y_{\tau}),\,\tau \in \mathcal{T}\}$ is uniformly integrable.
Finally, in Lemma \ref{lemm:uleqw} in Appendix \ref{auxiliaryres} we show that $u \leq w$ on $\mathbb{R}^2$.\vspace{0.25cm}

\emph{Step 2.} We here prove that $\hat{y}(z) \leq b(z)$ for any $z \in \mathbb{R}$. Suppose that this is not true and that there exists a point $z_o \in \mathbb{R}$ such that $b(z_o) < \hat{y}(z_o)$. Then, taking $y < b(z_o)$ and setting $\sigma:=\sigma(z_o,y)=\inf\{t\geq 0:\, Y_t \geq \hat{y}(Z_t)\}$, $\mathbb{P}_{(z_o,y)}$-a.s., with regard to \eqref{def:w} and \eqref{subharmonicw} one finds
\begin{align}
\label{step3-1}
\mathbb{E}_{(z_o,y)}\Big[e^{-\rho\sigma}w(Z_{\sigma},Y_{\sigma})\Big] = w(z_o,y) - \mathbb{E}_{(z_o,y)}\bigg[\int_0^{\sigma} e^{-\rho s}H_{b}(Z_s,Y_s)ds\bigg].
\end{align}
The latter equality is due to an application of the optional stopping theorem (see Th.\ 3.2 in Ch.\ II of \cite{RY}), thanks to the uniform integrability of martingale \eqref{subharmonicw}.
On the other hand, \eqref{def:H} together with \eqref{subharmonic1} imply (cf.\ again Th.\ 3.2 in Ch.\ II of \cite{RY})
\begin{align}
\label{step3-2}
\mathbb{E}_{(z_o,y)}\Big[e^{-\rho\sigma}u(Z_{\sigma},Y_{\sigma})\Big] = u(z_o,y) - \mathbb{E}_{(z_o,y)}\bigg[\int_0^{\sigma} e^{-\rho s + Z_s} \kappa \big(\delta - g - \rho - Y_s\big) ds\bigg].
\end{align}
Substracting \eqref{step3-1} from \eqref{step3-2}, noticing that $w(z_o,y)=\kappa e^{z_o}=u(z_o,y)$ by \emph{Step 2} of Lemma \ref{lemm:uleqw} in Appendix \ref{auxiliaryres} since $y<b(z_o)<\hat{y}(z_o)$ by assumption, and considering that $u\leq w$ on $\mathbb{R}^2$ (cf.\ again Lemma \ref{lemm:uleqw} in Appendix \ref{auxiliaryres}) we find
\begin{align}
\label{step3-4}
& 0 \geq \mathbb{E}_{(z_o,y)}\Big[e^{-\rho\sigma}\big(u(Z_{\sigma},Y_{\sigma})-w(Z_{\sigma},Y_{\sigma})\big)\Big] \nonumber\\
&= \mathbb{E}_{(z_o,y)}\bigg[\int_0^{\sigma} e^{-\rho s}e^{Z_s}\Big(C'(e^{Z_s}) +\kappa(\delta - g - \rho - Y_s)\Big)\mathds{1}_{\{b(Z_s) < Y_s < \hat{y}(Z_s)\}} ds\bigg].
\end{align}
However, $\sigma >0$ $\mathbb{P}_{(z_o,y)}$-a.s.\ due to continuity of $(Z_{\cdot},Y_{\cdot})$ and continuity of $\hat{y}(\,\cdot\,)$, the set $\{(z,y) \in \mathbb{R}^2: b(z) < y < \hat{y}(z)\}$ is open and not empty because of the continuity of $\hat{y}(\,\cdot\,)$ and $b(\,\cdot\,)$, and $\hat{y}(z)\leq \frac{C'(e^z)}{\kappa} + \delta - g - \rho$. Hence the right-hand side of the latter is strictly positive and we reach a contradiction. Therefore, $\hat{y}(z) \leq b(z)$ for any $z \in \mathbb{R}$.
\vspace{0.25cm}

\emph{Step 3.} Here we prove that $\hat{y}(z) \geq b(z)$ for any $z \in \mathbb{R}$. Assume by contradiction that there exists a point $z_o \in \mathbb{R}$ such that $\hat{y}(z_o) < b(z_o)$. Take $y \in (\hat{y}(z_o), b(z_o))$ and consider the stopping time $\tau^{\star}:=\tau^{\star}(z_o,y)=\inf\{t\geq 0:\, Y_t \leq \hat{y}(Z_t)\}$, $\mathbb{P}_{(z_o,y)}$-a.s. This is optimal for $u(z_o,y)$. Then by \eqref{def:H} and \eqref{OSrepresent-new}, an application of Th.\ 3.2 in Ch.\ II of \cite{RY}
\begin{align}
\label{step4-1}
\mathbb{E}_{(z_o,y)}\Big[e^{-\rho\tau^{\star}}u(Z_{\tau^{\star}},Y_{\tau^{\star}})\Big] = u(z_o,y) - \mathbb{E}_{(z_o,y)}\bigg[\int_0^{\tau^{\star}} e^{-\rho s + Z_s}C'(e^{Z_s}) ds\bigg],
\end{align}
whereas by \eqref{def:w} and \eqref{subharmonicw} it follows
\begin{align}
\label{step4-2}
\mathbb{E}_{(z_o,y)}\Big[e^{-\rho\tau^{\star}}w(Z_{\tau^{\star}},Y_{\tau^{\star}})\Big] = w(z_o,y) - \mathbb{E}_{(z_o,y)}\bigg[\int_0^{\tau^{\star}} e^{-\rho s}H_{b}(Z_s,Y_s)ds\bigg].
\end{align}
Notice that we can also write 
\beq
\label{step4-3}
\mathbb{E}_{(z_o,y)}\Big[e^{-\rho\tau^{\star}}u(Z_{\tau^{\star}},Y_{\tau^{\star}})\Big] = \kappa \mathbb{E}_{(z_o,y)}\Big[e^{-\rho\tau^{\star} + Z_{\tau^{\star}}}\Big] = \mathbb{E}_{(z_o,y)}\Big[e^{-\rho\tau^{\star}}w(Z_{\tau^{\star}},Y_{\tau^{\star}})\Big].
\eeq
Here the first equality follows from the uniform integrability of $\{e^{-\rho\tau}u(Z_{\tau},Y_{\tau}), \tau \in \mathcal{T}\}$, together with standard localisation arguments, and the optimality of $\tau^{\star}$ for $u(z_o,y)$; the second equality is due to \emph{Step 2} of Lemma \ref{lemm:uleqw} in Appendix \ref{auxiliaryres} (since by \emph{Step 2} of this proof we already know that $\hat{y}(\,\cdot\,) \leq b(\,\cdot\,)$), and again to a localisation argument exploiting the uniform integrability of $\{e^{-\rho\tau}w(Z_{\tau},Y_{\tau}), \tau \in \mathcal{T}\}$.

Then, substracting \eqref{step4-2} from \eqref{step4-1}, and taking into account \eqref{step4-3} and that $u \leq w$ on $\mathbb{R}^2$ (cf.\ Lemma \ref{lemm:uleqw} in Appendix \ref{auxiliaryres}), we find
\begin{align}
\label{step4-4}
& 0 \geq \mathbb{E}_{(z_o,y)}\bigg[\int_0^{\tau^{\star}} e^{-\rho s}e^{Z_s}\Big(C'(e^{Z_s}) +\kappa(\delta - g - \rho - Y_s)\Big)\mathds{1}_{\{\hat{y}(Z_s) < Y_s \leq b(Z_s)\}} ds\bigg].
\end{align}
Now $\tau^{\star} >0$ $\mathbb{P}_{(z_o,y)}$-a.s.\ by continuity of $(Z_{\cdot},Y_{\cdot})$ and of $\hat{y}(\,\cdot\,)$. Moreover, the set $\{(z,y) \in \mathbb{R}^2: \hat{y}(z) < y \leq b(z)\}$ is open and not empty because of the continuity of $\hat{y}(\,\cdot\,)$ and $b(\,\cdot\,)$, and $b(z)\leq \frac{C'(e^z)}{\kappa} + \delta - g - \rho$ since $b \in \mathcal{M}$ by assumption. The right-hand side of \eqref{step4-4} is therefore strictly positive and we reach a contradiction; that is, $\hat{y}(z) \geq b(z)$ for any $z \in \mathbb{R}$.
\vspace{0.25cm}

\emph{Step 4.} By \emph{Step 2} and \emph{Step 3} we conclude that $\hat{y} \equiv b$ on $\mathbb{R}$ and we thus complete the proof.
\end{proof}

\begin{remark}
Defining
$$K(z,\xi,v,\alpha):=\int_0^{\infty}e^{-\rho t}\bigg(\int_{\alpha}^{\infty} e^v\Big(C'(e^v) + \kappa(\delta-g-\rho-u)\Big) p_t(z,\xi;v,u)du\bigg)dt$$
and 
$$f(z,\xi):=\int_0^{\infty}e^{-\rho t}\bigg(\int_{\mathbb{R}^2} \kappa e^v(\delta-g-\rho-u) p_t(z,\xi;v,u)du\,dv\bigg)dt,$$ 
integral equation \eqref{inteq-2} can be rewritten by Fubini's theorem as
$$\kappa e^z + f(z,f(z)) = \int_{\mathbb{R}} K(z,f(z),v,f(v)) dv.$$
The latter reformulation shows that \eqref{inteq-2} belongs to the class of nonlinear Fredholm equations of second kind (see, e.g., \cite{Delves} or \cite{Hackbusch}). Since the state space of $(Z,Y)$ is unbounded, \eqref{inteq-2} is actually a so-called {\rm{singular nonlinear Fredholm equation of second kind}}. Classical textbooks offering a survey of numerical methods for equations of this kind are \cite{Baker} and \cite{Delves}, among others. However, being any of these methods certainly non trivial, we believe that such numerical computation falls outside the scopes of our work.
\end{remark}

\subsubsection{Asymptotic Behaviour of the Free Boundary}
\label{sec:asymptotic}

Here we provide the asymptotic behaviour of the free boundary $\hat{y}$. The results of this section will allow us to obtain in Section \ref{economics} interesting economic considerations on the optimal debt reduction policy (see in particular (ii) in Section \ref{economics}).

To perform our analysis we need to introduce the one-dimensional optimal stopping problem with value 
\beq
\label{OS-1dim}
w(y) := \inf_{\tau \geq 0}\mathbb{E}\bigg[\int_0^{\tau} e^{-\rho t + (\delta - g) t - \int_0^t Y^y_s ds} C'(0) dt + \kappa\Big(e^{-\rho \tau + (\delta - g) \tau - \int_0^{\tau} Y^y_s ds} - 1 \Big)dt \bigg], \quad y \in \mathbb{R}.
\eeq
Notice that the process $Y$ appears in \eqref{OS-1dim} only in the discount factor through its time time-integral. However, being $Y$ real-valued, such random discounting cannot be seen as an additive functional of $Y$, and techniques from, e.g., \cite{Dayanik} cannot be applied. The following result characterises the optimal stopping rule for problem \eqref{OS-1dim}. Its proof can be found in Appendix \ref{someproofs} and is based on a direct probabilistic analysis of the value function $w$.
\begin{proposition}
\label{prop:1dimOS}
For $w$ as in \eqref{OS-1dim} let $y^*:= \sup\{y \in \mathbb{R}: w(y) \geq 0\}$, with the convention $\sup\emptyset = - \infty$, and set $\sigma^*(y):=\inf\{t\geq 0: Y^y_t \leq y^*\}$, $y \in \mathbb{R}$, with the convention $\inf\emptyset = + \infty$. The following holds true:
\begin{itemize}
\item[(i)] if $C'(0) > 0$, then $y^*$ is given by the unique $y \in (-\infty, \delta - g- \rho)$ solving 
\beq
\label{inteq-ystar}
\mathbb{E}\bigg[\int_0^{\infty} e^{-\rho t + (\delta - g) t - \int_0^t Y^y_s ds}\Big(C'(0) + \kappa(\delta - g - \rho - Y^y_t)\Big)\mathds{1}_{\{Y^y_t > y\}} dt\bigg] = 0,
\eeq
and the a.s.\ finite stopping time $\sigma^*(y):=\inf\{t\geq 0: Y^y_t \leq y^*\}$, $y \in \mathbb{R}$, is optimal for \eqref{OS-1dim};
\item[(ii)] If $C'(0) = 0$, then $y^* = -\infty$ and  $\sigma^*(y) = +\infty$, $y \in \mathbb{R}$. In particular, $w(y) = -\kappa$ for any $y\in \mathbb{R}$.
\end{itemize}
\end{proposition}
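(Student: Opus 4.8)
The plan is to mimic, in this simpler one-dimensional setting, the analysis carried out for the two-dimensional stopping problem in Subsection \ref{sec:OS}, exploiting the fact that $Y$ enters \eqref{OS-1dim} only through the time-integral appearing in the random discount factor. First I would rewrite the payoff using the integration-by-parts identity \eqref{byparts} with $Z^{z,y}$ replaced by $(\delta-g)t - \int_0^t Y^y_s\,ds$, so that
\beq
\label{OS-1dim-2}
w(y) = \inf_{\tau\ge0}\mathbb{E}\bigg[\int_0^{\tau} e^{-\rho t + (\delta-g)t - \int_0^t Y^y_s ds}\Big(C'(0) + \kappa(\delta-g-\rho-Y^y_t)\Big)dt\bigg],
\eeq
which makes clear that $w(y)\le0$ for all $y$ (take $\tau=0$). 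The integrability needed to justify this rewriting and all subsequent limiting arguments follows from Lemma \ref{lem:limitZeui} applied along the line $z=0$, and from Assumption \ref{ass:rho}. Since $y\mapsto Y^y$ is increasing, $y\mapsto \int_0^t Y^y_s\,ds$ is increasing, and one reads off from \eqref{OS-1dim-2} that $y\mapsto w(y)$ is nondecreasing, bounded above by $0$; hence the stopping region $\{w=0\}$ is a down-set, determined by the threshold $y^*:=\sup\{y:\,w(y)\ge0\}=\sup\{y:\,w(y)=0\}$, and the optimal rule is the first hitting time $\sigma^*(y)=\inf\{t\ge0:\,Y^y_t\le y^*\}$ (optimality by Theorem D.12 in Appendix D of \cite{KS-MF}, once continuity of $w$ is established as in Proposition \ref{continuityOS}, which transfers verbatim). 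Note $w(y)=0$ for $y\le y^*$ and $w(y)<0$ for $y>y^*$.

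For part (ii), suppose $C'(0)=0$. Then the integrand in \eqref{OS-1dim-2} is $\kappa(\delta-g-\rho-Y^y_t)$ times the (positive) discount factor. I would argue that the infimum is attained only in the limit $\tau\uparrow\infty$: because $Y$ is a recurrent (stationary, mean-reverting) Ornstein-Uhlenbeck process, for every $y$ and every threshold level $\ell$ the process $Y^y$ exceeds $\ell$ on a set of times of infinite Lebesgue measure a.s., and the discount factor $e^{-\rho t + (\delta-g)t - \int_0^t Y^y_s ds}$ does not decay to $0$ fast enough to make stopping strictly beneficial; more precisely, one shows that the expected integral over $[0,\infty)$ equals $-\kappa$ using \eqref{byparts} together with \eqref{limitZ} of Lemma \ref{lem:limitZeui} (with $z=0$), which gives $\mathbb{E}[\int_0^\infty e^{-\rho t+(\delta-g)t-\int_0^t Y^y_s ds}(\delta-g-\rho-Y^y_t)\,dt] = (e^{0}-0)/\kappa\cdot\kappa^{-1}$... — concretely, \eqref{byparts} yields $\mathbb{E}[\int_0^\infty \kappa e^{-\rho t+Z^{0,y}_t}(\delta-g-\rho-Y^y_t)dt] = \mathbb{E}[\liminf_t \kappa e^{-\rho t+Z^{0,y}_t}] - \kappa = -\kappa$. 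Hence $w(y)\le -\kappa$; the reverse inequality $w(y)\ge-\kappa$ is immediate since $\kappa e^{-\rho\tau+Z_\tau}\ge0$, so $w(y)=-\kappa<0$ for all $y$, forcing $y^*=-\infty$ and $\sigma^*(y)=+\infty$.

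For part (i), with $C'(0)>0$, the bound $w(y)\le0$ together with the same contradiction argument as in Proposition \ref{stoppingnoempty} (evaluate at large negative $y$, where the drift term $C'(0)+\kappa(\delta-g-\rho-Y^y_t)$ is eventually positive with high probability) shows $y^*>-\infty$; and $y^*\le\delta-g-\rho$ because for $y>\delta-g-\rho+C'(0)/\kappa\ge \delta-g-\rho$ the integrand in \eqref{OS-1dim-2} is negative near $t=0$, so waiting strictly helps and $w(y)<0$. It remains to derive the integral equation \eqref{inteq-ystar} and to prove it pins down $y^*$ uniquely in $(-\infty,\delta-g-\rho)$. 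The equation comes from the integral representation of $w$ analogous to Proposition \ref{prop:representingu}: applying a (weak) Itô/Dynkin argument and the free-boundary structure, $w(y) = -\mathbb{E}[\int_0^\infty e^{-\rho t+(\delta-g)t-\int_0^t Y^y_s ds}(C'(0)+\kappa(\delta-g-\rho-Y^y_t))\mathds{1}_{\{Y^y_t>y^*\}}dt]$; evaluating at $y=y^*$ and using $w(y^*)=0$ gives \eqref{inteq-ystar}. Uniqueness follows by the same comparison scheme as in Steps 2--3 of the proof of Theorem \ref{th:integral}: if $b\in(-\infty,\delta-g-\rho)$ also solves \eqref{inteq-ystar}, define $w_b$ with threshold $b$, show $w\le w_b$, and run the hitting-time argument on the region strictly between $y^*$ and $b$, where the integrand $C'(0)+\kappa(\delta-g-\rho-y)$ has a definite sign (positive, since the region lies below $\delta-g-\rho+C'(0)/\kappa$), to force a contradiction unless $b=y^*$. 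Finally, $\sigma^*(y)<\infty$ a.s. because $Y^y$ is recurrent and $y^*>-\infty$, so it hits the level $y^*$ in finite time a.s.

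The main obstacle I anticipate is establishing the integral representation of $w$ rigorously — i.e. the one-dimensional analogue of Propositions \ref{prop:valueOSC1}, \ref{cor:C2} and \ref{prop:representingu}: one needs enough regularity of $w$ (smooth fit at $y^*$ and local $C^2$ up to the boundary, using that the ``time-integral'' variable plays the role of a bounded-variation / time-like coordinate, exactly as in Proposition \ref{cor:C2}) to justify the weak Itô formula and to identify the generator term on the stopping set. Everything else is a routine transcription of the arguments already developed in Subsection \ref{sec:OS}.
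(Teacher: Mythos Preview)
Your overall strategy matches the paper's: rewrite via \eqref{byparts}, identify a one-sided stopping region, derive an integral representation for $w$ by smooth fit plus It\^o--Tanaka, evaluate at the threshold to get \eqref{inteq-ystar}, and prove uniqueness by the comparison argument of Theorem \ref{th:integral}. The treatment of part (ii) is also essentially the paper's (though the paper dispatches it in one line using the convention \eqref{convention}).

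There is, however, a genuine error in your monotonicity step. You claim that ``one reads off from \eqref{OS-1dim-2} that $y\mapsto w(y)$ is nondecreasing''. This is both the wrong direction and the wrong representation to argue from. In \eqref{OS-1dim-2} the integrand is a product of the positive discount factor (which decreases in $y$) and $C'(0)+\kappa(\delta-g-\rho-Y^y_t)$ (which also decreases in $y$ but can be negative), so the product has no obvious monotonicity. The correct argument, as in the paper, uses the \emph{original} form \eqref{OS-1dim}: there both $e^{-\rho t+(\delta-g)t-\int_0^t Y^y_s ds}C'(0)\ge0$ and $\kappa(e^{-\rho\tau+(\delta-g)\tau-\int_0^\tau Y^y_s ds}-1)$ are pathwise nonincreasing in $y$, hence $w$ is \emph{nonincreasing}. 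Your subsequent claim that $\{w=0\}$ is a down-set and that $\sigma^*(y)=\inf\{t:Y^y_t\le y^*\}$ is the optimal rule is correct, but it is inconsistent with ``$w$ nondecreasing'' (which would make $\{w=0\}$ an up-set); so as written the argument does not hang together.

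A second gap: your justification of $y^*\le\delta-g-\rho$ does not deliver that bound. Showing that the integrand in \eqref{OS-1dim-2} is negative near $t=0$ requires $y>\delta-g-\rho+C'(0)/\kappa$, which only yields $y^*\le\delta-g-\rho+C'(0)/\kappa$. The paper obtains the sharper bound by observing directly that $\{y:\,\delta-g-\rho-y<0\}\subseteq\mathcal{C}_w$; to make this work you should argue at the level of the terminal payoff in \eqref{OS-1dim} (for $y>\delta-g-\rho$ the map $\tau\mapsto \kappa e^{-\rho\tau+(\delta-g)\tau-\int_0^\tau Y^y_s ds}$ is strictly decreasing near $\tau=0$, so delaying strictly helps regardless of the running cost term), rather than from the sign of the full integrand in \eqref{OS-1dim-2}. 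Without $y^*<\delta-g-\rho$ your uniqueness argument ``for $b\in(-\infty,\delta-g-\rho)$'' does not cover the threshold you have actually located.
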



\begin{proposition}
\label{prop:limits}
For $y^*$ as given in Proposition \ref{prop:1dimOS} one has:
\begin{itemize}
\item[(i)] $\lim_{z \uparrow +\infty} \hat{y}(z) = \infty$;
\item[(ii)] $\lim_{z \downarrow -\infty} \hat{y}(z) = y^*$.
\end{itemize}
\end{proposition}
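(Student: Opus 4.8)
The plan is to establish the two limits separately. Throughout I use that $\hat y(\,\cdot\,)$ is nondecreasing (Proposition \ref{prop:preliminarybdy}(ii)), so both limits exist in $[-\infty,+\infty]$, that $u(z,y)\le\kappa e^{z}$ always, and that $\mathcal S\ne\emptyset$ (Proposition \ref{stoppingnoempty}). For (i) I would argue by contradiction, cashing in the super-linear growth of $C$ in Assumption \ref{ass:c}(ii): if the stopping region stayed below a finite horizontal line, then starting just above that line the process would spend a positive amount of time in $\mathcal C$ accruing the running cost $e^{Z}C'(e^{Z})\ge C(e^{Z})\ge K_{1}e^{\gamma Z}-K$, which forces $u$ above $\kappa e^{z}$ for $z$ large. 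For (ii) I would sandwich $\hat y(-\infty)$ between $y^{*}$ from above and from below, using the scaling identity $u(z,y)=e^{z}G(z,y)$, where $G(z,y):=\inf_{\tau}\mathbb E\big[\int_{0}^{\tau}e^{-\rho t+Z^{0,y}_{t}}C'(e^{z}e^{Z^{0,y}_{t}})\,dt+\kappa e^{-\rho\tau+Z^{0,y}_{\tau}}\big]$, for the upper bound and the pathwise inequality $C'(e^{Z_{t}})\ge C'(0)$ for the lower bound; the role of the one-dimensional problem \eqref{OS-1dim} is that $G(z,y)\to w(y)+\kappa$ as $z\downarrow-\infty$.

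For (i): assume $L:=\lim_{z\uparrow\infty}\hat y(z)<\infty$ and set $y_{0}:=L+1$, so that $(z,y_{0})\in\mathcal C$ for every $z$ (cf.\ \eqref{contstopp}), while the optimal stopping time \eqref{OStime} satisfies $\tau^{\star}(z,y_{0})\ge\rho_{1}:=\inf\{t\ge0:\ Y^{y_{0}}_{t}\le L\}$, with $\rho_{1}>0$ $\mathbb P_{(z,y_0)}$-a.s.\ by path continuity. Using optimality of $\tau^{\star}$ in \eqref{valueOS}, $u=\kappa e^{z}$ on $\mathcal S$, non-negativity of $C'$, the convexity bound $e^{w}C'(e^{w})\ge C(e^{w})$ (recall $C(0)=0$), Assumption \ref{ass:c}(ii), and $Z^{z,y_{0}}_{t}=z+Z^{0,y_{0}}_{t}$, I would obtain
\[
u(z,y_{0})\ \ge\ \mathbb E_{(z,y_{0})}\!\Big[\int_{0}^{\rho_{1}}e^{-\rho t+Z_{t}}C'(e^{Z_{t}})\,dt\Big]\ \ge\ K_{1}\,c\,e^{\gamma z}-\frac{K}{\rho},\qquad c:=\mathbb E\Big[\int_{0}^{\rho_{1}}e^{-\rho t+\gamma Z^{0,y_{0}}_{t}}\,dt\Big]\in(0,\infty),
\]
with $c>0$ since $\rho_{1}>0$ a.s.\ and $c<\infty$ by Assumption \ref{ass:rho}; crucially $c$ does not depend on $z$. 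As $\gamma>1$, the right-hand side exceeds $\kappa e^{z}$ for $z$ large, contradicting $u(z,y_{0})\le\kappa e^{z}$. Hence $\lim_{z\uparrow\infty}\hat y(z)=+\infty$.

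For (ii), upper bound: fix $y>y^{*}$; by Proposition \ref{prop:1dimOS} one has $w(y)<0$ (with $w\equiv-\kappa$ and every real $y$ admissible if $y^{*}=-\infty$). Since $(z,y)\in\mathcal C\iff G(z,y)<\kappa$, it suffices to show $\limsup_{z\downarrow-\infty}G(z,y)<\kappa$. I would fix $\varepsilon>0$ and a stopping time $\tau_{\varepsilon}$ that is $\varepsilon$-optimal for $w(y)+\kappa=\inf_{\tau}\mathbb E\big[\int_{0}^{\tau}e^{-\rho t+Z^{0,y}_{t}}C'(0)\,dt+\kappa e^{-\rho\tau+Z^{0,y}_{\tau}}\big]$ (this representation follows from \eqref{OS-1dim} and \eqref{def:Z}), use $\tau_{\varepsilon}$ as a sub-optimal control in $G(z,y)$, and let $z\downarrow-\infty$: since $C'$ is nondecreasing and continuous at $0$, the integrand decreases to $e^{-\rho t+Z^{0,y}_{t}}C'(0)$, and $\mathbb E\big[\int_{0}^{\infty}e^{-\rho t+Z^{0,y}_{t}}C'(e^{Z^{0,y}_{t}})\,dt\big]<\infty$ by \eqref{ui1}, so monotone convergence gives $\limsup_{z\downarrow-\infty}G(z,y)\le w(y)+\kappa+\varepsilon$; letting $\varepsilon\downarrow0$ yields $\limsup_{z\downarrow-\infty}G(z,y)\le w(y)+\kappa<\kappa$. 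Thus $\hat y(z)<y$ for all $z$ sufficiently negative, so $\limsup_{z\downarrow-\infty}\hat y(z)\le y^{*}$. For the lower bound, fix $y\le y^{*}$, so $w(y)=0$ by Proposition \ref{prop:1dimOS}. For every $\tau\in\mathcal T$, since $e^{Z_{t}}>0$ and $C'$ is nondecreasing, $C'(e^{Z_{t}})\ge C'(0)$ pathwise, and then \eqref{valueOS-2}, the integrability in \eqref{ui1}, the integration by parts \eqref{byparts} (with $Z^{0,y}_{0}=0$) and $Z^{z,y}_{t}=z+Z^{0,y}_{t}$ give
\[
u(z,y)-\kappa e^{z}\ =\ \inf_{\tau}\mathbb E_{(z,y)}\!\Big[\int_{0}^{\tau}e^{-\rho t+Z_{t}}\big(C'(e^{Z_{t}})+\kappa(\delta-g-Y_{t}-\rho)\big)dt\Big]\ \ge\ e^{z}w(y)\ =\ 0.
\]
Combined with $u(z,y)\le\kappa e^{z}$ this forces $u(z,y)=\kappa e^{z}$, i.e.\ $(z,y)\in\mathcal S$, i.e.\ $y\le\hat y(z)$, for \emph{every} $z$. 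Letting $y\uparrow y^{*}$ gives $\hat y(z)\ge y^{*}$ for all $z$, hence $\liminf_{z\downarrow-\infty}\hat y(z)\ge y^{*}$, and (ii) follows.

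The step I expect to be the most delicate is the passage to the limit $G(z,y)\to w(y)+\kappa$ as $z\downarrow-\infty$ in (ii): one has to interchange this limit with an expectation involving the (possibly unbounded) stopping time $\tau_{\varepsilon}$, so the argument must lean on the integrability provided by Assumption \ref{ass:rho} and Lemma \ref{lem:limitZeui}, as well as on continuity of $C'$ at $0$. A secondary but essential point is extracting from Proposition \ref{prop:1dimOS} the exact sign pattern $w\equiv0$ on $(-\infty,y^{*}]$ and $w<0$ on $(y^{*},+\infty)$, which is precisely what makes the upper and lower bounds in (ii) meet at $y^{*}$.
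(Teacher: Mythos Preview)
Your proof is correct and follows essentially the same approach as the paper: part (i) by contradiction via the super-linear growth of $C$ and a stopping time depending only on $Y$ (the paper uses $\tau_o(y)=\inf\{t:Y^y_t\notin(y_1,y_2)\}$ where you use $\rho_1=\inf\{t:Y^{y_0}_t\le L\}$), and part (ii) by sandwiching through the scaling $e^{-z}(u(z,y)-\kappa e^z)$ and comparison with the one-dimensional problem $w$. The only cosmetic difference is that in (ii) your organization is slightly more direct, going straight to ``$(z,y)\in\mathcal C$ for $z$ sufficiently negative whenever $y>y^*$'', whereas the paper first establishes the full limit $e^{-z}(u(z,y)-\kappa e^z)\to w(y)$ as an intermediate step and then argues by contradiction.
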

\begin{proof}

We prove each item separately.
\vspace{0.15cm}

\emph{(i)}\,\,We follow a contradiction scheme. Set $\lim_{z \uparrow \infty} \hat{y}(z):= \hat{y}_{\infty}$, which exists by monotonicity, and suppose $\hat{y}_{\infty} < \infty$. Then take any $z\in \mathbb{R}$, $y_2 > y_1 > \hat{y}_{\infty}$, and for arbitrary $y \in (y_1,y_2)$ set $\tau_o(y):=\inf\{t\geq 0: Y^y_t \notin (y_1,y_2)\}$ a.s. Notice now that $\tau_o(y) \leq \tau^{\star}(z,y)$ a.s.\ (cf.\ \eqref{OStime}) and $(z,y) \in \mathcal{C}$, and employ the subharmonic characterisation of the value function \eqref{valueOS}, together with Th.\ 3.2 in Ch.\ II of \cite{RY} due to \eqref{ui1}, so to write
\begin{eqnarray}
\label{limit1}
&& \kappa e^z > u(z,y) 
= \mathbb{E}\bigg[\int_0^{\tau_o(y)} e^{-\rho t + Z^{z,y}_t} C'( e^{Z^{z,y}_t}) dt + e^{-\rho \tau_o(y)}u(Z^{z,y}_{\tau_o(y)},Y^y_{\tau_o(y)})\bigg] \\
&& \geq \mathbb{E}\bigg[\int_0^{\tau_o(y)} e^{-\rho t + Z^{z,y}_t} C'( e^{Z^{z,y}_t}) dt\bigg] \geq \mathbb{E}\bigg[\int_0^{\tau_o(y)} e^{-\rho t} C( e^{Z^{z,y}_t}) dt\bigg].\nonumber
\end{eqnarray}
Here the fourth step is due to the nonnegativity of $u$, whereas the last one follows by convexity of $C$ and the fact that $C(0)=0$. By using now Assumption \ref{ass:c}-\emph{(ii)} we get from \eqref{limit1}
\beq
\label{limit2}
\kappa > e^{(\gamma - 1) z} K_1 \mathbb{E}\bigg[\int_0^{\tau_o(y)} e^{-\rho t + \gamma Z^{0,y}_t} dt\bigg] - \frac{K e^{-z}}{\rho}\Big(1 - \mathbb{E}\big[e^{-\rho \tau_o(y)}]\Big),
\eeq
for any $y \in (y_1,y_2)$ and for any $z\in \mathbb{R}$. Recalling that $\gamma>1$ we then reach a contradiction by taking $z$ sufficiently large, and we therefore conclude that $\lim_{z \uparrow \infty} \hat{y}(z) = \infty$.
\vspace{0.08cm}

\emph{(ii)}\,\,The proof of this last property is more involved, and it is organised in three steps. 
\vspace{0.08cm}

\emph{Step 1.}\,\,Let $y^*$ as defined in Proposition \ref{prop:1dimOS}. Here we show that $\lim_{z \uparrow -\infty} \hat{y}(z) \geq y^*$. This is clearly true if $C'(0)=0$, hence we now consider only the case $C'(0)>0$. Notice that from \eqref{valueOS} we can write
\begin{eqnarray}
\label{limitesotto1}
&& e^{-z}(u(z,y) - \kappa e^z) = \inf_{\tau \geq 0}\mathbb{E}\bigg[\int_0^{\tau} e^{-\rho t + Z^{0,y}_t} C'(e^{Z^{z,y}_t}) dt + \kappa\big(e^{-\rho \tau + Z^{0,y}_{\tau}} - 1 \big)dt \bigg]\nonumber \\
&& \geq \inf_{\tau \geq 0}\mathbb{E}\bigg[\int_0^{\tau} e^{-\rho t + Z^{0,y}_t} C'(0) dt + \kappa\big(e^{- \rho\tau + Z^{0,y}_{\tau}} - 1 \big)dt \bigg] = w(y),
\end{eqnarray}
where we have used that $C'(\,\cdot\,)$ in nondecreasing by convexity of $C(\,\cdot\,)$. It thus follows that for any $z\in \mathbb{R}$ the $z$-section of $\mathcal{S}$, i.e.\ $\mathcal{S}_z:=\{y \in \mathbb{R}: u(z,y)=\kappa e^z\}$ is such that $\mathcal{S}_z\supseteq \{y \in \mathbb{R}: w(y)=0\}$. Because by Proposition \ref{prop:1dimOS} $\{y \in \mathbb{R}: w(y)=0\} = \{y \in \mathbb{R}: y \leq y^*\}$, we conclude that $\hat{y}(z) \geq y^*$ for any $z\in \mathbb{R}$, and therefore that $\lim_{z \downarrow -\infty} \hat{y}(z) \geq y^*$.
\vspace{0.08cm}

\emph{Step 2.}\,\,We now prove that $\lim_{z\downarrow -\infty} e^{-z}(u(z,y) - \kappa e^z) = w(y)$. By \eqref{limitesotto1} (which holds if $C'(0) \geq 0$) we clearly have $\liminf_{z\downarrow -\infty}e^{-z}(u(z,y) - \kappa e^z) \geq w(y)$. On the other hand, recalling that $\sigma^*(y)=\inf\{t\geq 0: Y^y_t \leq y^*\}$ is the optimal stopping time for $w(y)$ (with the convention $\inf\emptyset=+\infty$) we can write
\begin{eqnarray}
\label{limitesotto2}
&& e^{-z}(u(z,y) - \kappa e^z) \leq \mathbb{E}\bigg[\int_0^{\sigma^*(y)} e^{-\rho t + Z^{0,y}_t} C'(e^{z + Z^{0,y}_{t}}) dt +\kappa\big(e^{-\rho \sigma^*(y) + Z^{0,y}_{\sigma^*(y)}} - 1\big)\bigg].
\end{eqnarray}
Taking limit superior as $z\downarrow -\infty$ on both sides of \eqref{limitesotto2}, invoking the monotone convergence theorem we find $\limsup_{z\downarrow -\infty} e^{-z}(u(z,y) - \kappa e^z) \leq w(y)$, which finally implies $\lim_{z\downarrow -\infty} e^{-z}(u(z,y) - \kappa e^z) = w(y)$.
\vspace{0.08cm}

\emph{Step 3.}\,\,Here we prove that $\hat{y}_{-\infty}:=\lim_{z \downarrow -\infty} \hat{y}(z)= y^*$. Since we already know by \emph{Step 1} that $\hat{y}_{-\infty} \geq y^*$, we argue by contradiction and we suppose that $\hat{y}_{-\infty} > y^*$. Then take $y_1,y_2$ such that $y^* < y_1 < y_2 < \hat{y}_{-\infty}$, set $\mathcal{Q}:=\{ y \in \mathbb{R}:\, y \in (y_1,y_2)\}$ and note that $\mathcal{Q} \subseteq \mathcal{C}_w = \{y \in \mathbb{R}: w(y) <0\}$. Then noticing that $\{(z,y) \in \mathbb{R}^2:\, y \in \mathcal{Q}\} \subset \mathcal{S}$ we have by \emph{Step 2}
\beq
\label{limitesotto3}
0= \lim_{z \downarrow -\infty}e^{-z}(u(z,y) - \kappa e^z) = w(y), \quad y \in \mathcal{Q}.
\eeq
However, $w(y)=0$ is not possible on $\mathcal{Q}$, because $\mathcal{Q} \subseteq \mathcal{C}_w$. Hence we reach a contradiction, and we have thus proved that $\lim_{z \downarrow -\infty} \hat{y}(z)= y^*$.
\end{proof}


\subsection{The Optimal Cumulative Primary Balance and the Minimal Cost}
\label{sec:OC}

In this section we provide the optimal debt reduction policy. This takes the form of a threshold policy, where the boundary triggering the optimal intervention rule is closely related to the stopping boundary $\hat{y}$ completely characterised in the last section.

\subsubsection{The Action and Inaction Regions}

Recall that $\mathcal{O}:=(0,\infty) \times \mathbb{R}$ and define
\begin{align}
\label{regionsCA0}
\mathcal{I}:=\{(x,y)\in \mathcal{O}\ | \ u(\ln(x),y)< x\}\quad\text{and}\quad \mathcal{I}^c:=\{(x,y)\in \mathcal{O} \ | \ u(\ln(x),y) = x\}.
\end{align}
The sets $\mathcal{I}$ and $\mathcal{I}^c$ are respectively the candidate \emph{inaction} region and the candidate \emph{action} region for the control problem \eqref{eq:valueOC}. When the state variable belongs to $\mathcal{I}^c$ it should be optimal to reduce the level of debt ratio, as it is too high. On the other hand, a non intervention policy should be applied in region $\mathcal{I}$. This will be verified in the following. 

Thanks to Proposition \ref{continuityOS}, $\mathcal{I}$ and $\mathcal{I}^c$ are open and closed, respectively. Moreover, it is clear that they can be expressed also as
$\mathcal{I}=\{(x,y)\in \mathcal{O}\ | \ \frac{1}{x}u(\ln(x),y)< 1\}$ and $\mathcal{I}^c =\{(x,y)\in \mathcal{O} \ | \ \frac{1}{x}u(\ln(x),y) = 1\}$, where a simple calculation from \eqref{valueOS} reveals that
\begin{equation}
\label{def:1suxu}
\frac{1}{x}u(\ln(x),y) = \inf_{\tau \geq 0}\mathbb{E}\bigg[\int_0^{\tau} e^{-\rho t + (\delta - g)t -\int_0^t Y^y_s ds}C'(x e^{(\delta - g)t -\int_0^t Y^y_s ds}) + e^{-\rho \tau + (\delta - g)\tau -\int_0^{\tau} Y^y_s ds}\bigg],
\end{equation}
for any $(x,y)\in\mathcal{O}$.

It follows from \eqref{def:1suxu} that $y \mapsto \frac{1}{x}u(\ln(x),y)$ is nonincreasing and $x \mapsto \frac{1}{x}u(\ln(x),y)$ is nondecreasing. The latter property implies that for fixed $y \in \mathbb{R}$ the region $\mathcal{I}$ is below $\mathcal{I}^c$, and we define the boundary between these two regions by
\beq
\label{boundary-b}
b(y):=\sup\{x > 0:\, u(\ln(x),y) < x\}, \qquad y \in \mathbb{R},
\eeq
with the convention $\sup \emptyset = 0$. Then $\mathcal{I}$ and $\mathcal{I}^c$ can be equivalently written as
\begin{align}
\label{regionsCA1}
\mathcal{I}=\{(x,y)\in \mathcal{O}\ | \ x < b(y)\}\quad\text{and}\quad \mathcal{I}^c=\{(x,y)\in \mathcal{O} \ | \ x \geq b(y)\}.
\end{align}
From the previous and from \eqref{cont}, \eqref{stopp} and \eqref{contstopp} we also have
\begin{eqnarray}
\label{equivalences}
e^z \ < \ b(y)\ \Longleftrightarrow  \ u(z,y) \ < \ e^z \ \Longleftrightarrow \ y \ > \ \hat{y}(z), \quad (z,y) \in \mathbb{R}^2.
\end{eqnarray}
Hence, for any $y \in \mathbb{R}$, $b$ of \eqref{boundary-b} can be seen as the pseudo-inverse of the nondecreasing (cf.\ Proposition \ref{prop:preliminarybdy}) function $z \mapsto \hat{y}(z)$ composed with the logarithmic function; that is,
\beq
\label{bhaty}
b(y)=\sup\{ x > 0 \ |\ y> \hat{y}(\ln(x))\}, \quad y \in \mathbb{R}.
\eeq
It thus follows that the characterization of $\hat{y}$ of Theorem \ref{th:integral} is actually equivalent to a complete characterization of $b$ thanks to \eqref{bhaty}.

Recall $y^*$ of Proposition \ref{prop:1dimOS}. The next proposition collects some properties of $b$, and its proof can be found in Appendix \ref{someproofs}.
\begin{proposition}
\label{propertiesb}
The boundary $b$ of \eqref{boundary-b} is such that
\begin{enumerate}
\item $y \mapsto b(y)$ is nondecreasing and left-continuous;
\item $C'(b(y))\geq \kappa[y - \delta + g + \rho]^+$;
\item $b(y) =0$ for any $y\leq y^*$, whenever $y^* > -\infty$.  
\end{enumerate}
\end{proposition}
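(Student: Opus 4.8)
The plan is to read all three properties off the pseudo-inverse identity $b(y)=\sup\{x>0:\ y>\hat y(\ln x)\}$ from \eqref{bhaty}, the chain of equivalences \eqref{equivalences}, and the facts about $\hat y$ already in hand: it is nondecreasing and continuous, it satisfies $\hat y(z)\le\frac{C'(e^z)}{\kappa}+\delta-g-\rho$ (Proposition \ref{prop:preliminarybdy}), and $\hat y(z)\downarrow y^*$ as $z\downarrow-\infty$, $\hat y(z)\uparrow+\infty$ as $z\uparrow+\infty$ (Proposition \ref{prop:limits}). As a preliminary I would record that $b(y)<\infty$ for every $y$, since $\hat y(\ln x)\to\infty$ makes the set in \eqref{bhaty} bounded above, and that $\hat y(z)\ge y^*$ for all $z$ by monotonicity.

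For item (1), monotonicity is immediate: if $y_1\le y_2$ then $\{x>0:\ y_1>\hat y(\ln x)\}\subseteq\{x>0:\ y_2>\hat y(\ln x)\}$, so $b(y_1)\le b(y_2)$. For left-continuity I would invoke that $\mathcal I=\{(x,y):\ x<b(y)\}$ is open (a consequence of the continuity of $u$, as noted after \eqref{regionsCA1}): were $b(y_0-)<b(y_0)$, a point $x_0\in(b(y_0-),b(y_0))$ would lie in $\mathcal I$ while every point with second coordinate $y<y_0$ near $(x_0,y_0)$ would not, since $b(y)\le b(y_0-)<x_0$ — contradicting openness. Equivalently, one argues directly from \eqref{equivalences}: $x_0<b(y_0)$ gives $y_0>\hat y(\ln x_0)$, while $x_0>b(y_0-\varepsilon)$ for all $\varepsilon>0$ gives $y_0-\varepsilon\le\hat y(\ln x_0)$, and letting $\varepsilon\downarrow0$ produces a contradiction.

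Item (3) is then quick: if $y\le y^*$ then $\hat y(\ln x)\ge y^*\ge y$ for every $x>0$, so the set in \eqref{bhaty} is empty and $b(y)=0$ (and the same computation gives the converse, $b(y)>0$ for $y>y^*$, which is used below). For item (2), I would split on whether $b(y)=0$. If $b(y)>0$, then since $b(y)\not<b(y)$, evaluating \eqref{equivalences} at $z=\ln b(y)$ gives $y\le\hat y(\ln b(y))$, and Proposition \ref{prop:preliminarybdy}(i) turns this into $y\le\frac{C'(b(y))}{\kappa}+\delta-g-\rho$, i.e. $C'(b(y))\ge\kappa(y-\delta+g+\rho)$. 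If $b(y)=0$, then $(x,y)\in\mathcal I^c$ for all $x>0$, equivalently $(z,y)\in\mathcal S$ for all $z$; since $\mathcal S\subseteq\mathcal U^c$ with $\mathcal U=\{(z,y):\ C'(e^z)+\kappa(\delta-g-y-\rho)<0\}$ (shown inside the proof of Proposition \ref{prop:preliminarybdy}(i)), letting $z\downarrow-\infty$ and using continuity of $C'$ yields $C'(0)+\kappa(\delta-g-y-\rho)\ge0$, i.e. $C'(b(y))=C'(0)\ge\kappa(y-\delta+g+\rho)$. In either case $C'(b(y))\ge\kappa(y-\delta+g+\rho)$, and since $b(y)\ge0$ forces $C'(b(y))\ge0$, we get $C'(b(y))\ge\kappa[y-\delta+g+\rho]^+$.

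I do not expect a genuine obstacle: all the heavy analysis is already packaged into the continuity and asymptotics of $\hat y$. The only points that need care are the bookkeeping of the degenerate cases $b(y)\in\{0,\infty\}$ in item (2), and the fact that left-continuity of $b$ in item (1) is essentially a restatement of the openness of the inaction region $\mathcal I$ (equivalently, of the continuity of $\hat y$), so it should be phrased to make that dependence explicit rather than re-derived from scratch.
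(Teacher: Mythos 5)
Your proof is correct and follows essentially the same route as the paper: monotonicity and left-continuity of $b$ are read off the pseudo-inverse relation and the openness of $\mathcal I$ (equivalently, lower semicontinuity of $b$ from continuity of $u$), item (2) comes from combining \eqref{equivalences} with the bound $\hat y(z)\le \frac{C'(e^z)}{\kappa}+\delta-g-\rho$ from Proposition \ref{prop:preliminarybdy}(i), and item (3) from $\hat y\ge y^*$. Your explicit case split $b(y)>0$ vs.\ $b(y)=0$ in item (2) simply fills in a detail that the paper leaves implicit; otherwise the arguments coincide.
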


\subsubsection{The Optimal Control}

For $b$ as in \eqref{boundary-b} (see also \eqref{bhaty}) introduce the nondecreasing process
\beq
\label{OC-candidate}
\displaystyle \overline{\nu}^{\star}_t = \Big[ x - \inf_{0\leq s \leq t}\Big(b(Y^y_s)e^{-(\delta - g)s + \int_0^s Y^y_u du}\Big)\Big] \vee 0, \quad t \geq 0, \qquad \overline{\nu}^{\star}_{0-}=0,
\eeq
and then the process
\beq  
\label{nustar}
\nu^{\star}_t:=\int_{0}^t e^{(\delta - g)s - \int_0^s Y^y_u du} d\overline{\nu}^{\star}_s, \quad t \geq 0, \qquad \nu^{\star}_{0-}=0. 
\eeq

\begin{theorem}
\label{teo:ver-2}
Let $\widehat{v}(x,y):= \int_{-\infty}^{\ln(x)}u(q,y)dq$, $(x,y)\in \mathcal{O}$. Then one has $\widehat{v} = v$ on $\mathcal{O}$, and $\nu^{\star}$ as in \eqref{nustar} is optimal for the control problem \eqref{eq:valueOC}.
\end{theorem}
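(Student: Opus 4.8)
The plan is to deduce Theorem \ref{teo:ver-2} from the general probabilistic verification result, Theorem \ref{teo:ver}, by checking that the Markovian specification falls into the abstract framework and then matching the abstract optimal control with the explicit process $\nu^{\star}$ of \eqref{nustar}.

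First I would invoke Proposition \ref{prop:verificoipotesi}: with the choice \eqref{Markov-set}, Assumptions \ref{ass:h} and \ref{ass:kappa} hold, so Theorem \ref{teo:ver} applies, here with $\beta_t = \delta - g - Y_t$. Fixing $Y_0 = y$ (so that $\mathcal{F}_0$ is trivial), the time-$0$ value of the abstract optimal stopping problem \eqref{valueOS-general} becomes $U(z) = U_0(z) = \inf_{\tau}\mathbb{E}_{(z,y)}[\zeta_\tau(z)]$, and the change of variables $e^{Z^{z,y}_t} = e^{z}\,e^{\int_0^t \beta_s ds}$ (cf.\ \eqref{def:1suxu-general} versus \eqref{def:1suxu}) shows that this equals $u(z,y)$. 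Moreover, since $u$ is continuous (Proposition \ref{continuityOS}) and $(Z,Y)$ has continuous paths, the process $t\mapsto e^{-\rho t}u(Z^{z,y}_t,Y^y_t)$ already provides a continuous version, so no c\`adl\`ag modification is needed and the abstract hitting time $\tau^*(z)$ of \eqref{taustargeneral} coincides with $\tau^{\star}(z,y)$ of \eqref{OStime}. Integrating in $z$, $\widehat{V}(x) = \int_{-\infty}^{\ln(x)} u(q,y)\,dq = \widehat{v}(x,y)$ and $V(x) = v(x,y)$, so Theorem \ref{teo:ver} yields $\widehat{v} = v$ on $\mathcal{O}$.

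The remaining, and main, step is to check that the abstract optimal control $\nu^*$ built in \eqref{candidate-general}--\eqref{nustar-general} out of $\sigma^*(\xi) = \tau^*(\ln(\xi)) = \tau^{\star}(\ln(\xi),y)$ coincides with $\nu^{\star}$ of \eqref{OC-candidate}--\eqref{nustar}. I would use the chain of equivalences \eqref{equivalences}, namely $e^{z} < b(y) \Leftrightarrow u(z,y) < e^{z} \Leftrightarrow y > \hat{y}(z)$, together with the pseudo-inverse relation \eqref{bhaty}, to compute for $\alpha \in [0,x]$
\[
\sigma^*((x-\alpha)+) \le t \ \Longleftrightarrow\ \exists\, s \le t:\ Y^y_s \le \hat{y}\big(\ln(x-\alpha) + Z^{0,y}_s\big) \ \Longleftrightarrow\ x - \alpha \ge \inf_{0 \le s \le t} b(Y^y_s)\, e^{-(\delta - g)s + \int_0^s Y^y_u du},
\]
using $e^{-(\delta-g)s + \int_0^s Y^y_u du} = e^{-Z^{0,y}_s}$. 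Taking the supremum over admissible $\alpha$ in \eqref{candidate-general} gives exactly $\overline{\nu}^*_t = \big[x - \inf_{0\le s\le t} b(Y^y_s)\, e^{-(\delta-g)s + \int_0^s Y^y_u du}\big]\vee 0 = \overline{\nu}^{\star}_t$, and passing through \eqref{nustar-general} and \eqref{nustar} yields $\nu^* = \nu^{\star}$. Admissibility $\nu^{\star} \in \mathcal{A}(x,y)$ and uniqueness of the optimal control then follow directly from Theorem \ref{teo:ver} and the strict convexity of $\mathcal{J}_{x,y}(\,\cdot\,)$.

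I expect the only delicate point to be the bookkeeping around left/right-continuity and strict versus non-strict inequalities in the displayed identities: matching the one-sided limit $\sigma^*((x-\alpha)+)$ with the right-continuous inverse of \eqref{candidate-general}, using left-continuity of $b$ (Proposition \ref{propertiesb}) and monotonicity and continuity of $\hat{y}$ (Propositions \ref{prop:preliminarybdy} and \ref{prop:bdcont}), so that the equivalences above hold up to sets of times of zero length, exactly as in Step 2 of the proof of Theorem \ref{teo:ver}. Everything else is a direct translation of the abstract statement into the present Markovian notation.
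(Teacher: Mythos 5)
Your proposal is correct and follows essentially the same route as the paper: reduce to Theorem \ref{teo:ver} via the identification $U(z)=u(z,y)$, and then verify that the abstract optimal control coincides with $\nu^\star$ by showing that $\overline{\nu}^\star$ is the right-continuous inverse of $\tau^\star(\cdot,y)$, which is done via the same chain of equivalences built on \eqref{equivalences} and \eqref{bhaty}. The only slight imprecision is in your first displayed equivalence, which as written compares $\sigma^*((x-\alpha)+)\le t$ with $\sigma^*(x-\alpha)\le t$; both are in fact equivalent to $x-\alpha \ge \inf_{0\le s\le t}b(Y^y_s)e^{-Z^{0,y}_s}$ once one uses lower semi-continuity of $s\mapsto b(Y^y_s)e^{-Z^{0,y}_s}$ (from left-continuity and monotonicity of $b$ and continuity of $(Y,Z)$) to see that the infimum over $[0,t]$ is attained, so the one-sided limit causes no loss — but you rightly flag this as the only delicate bookkeeping, and it resolves exactly as in Step~2 of Theorem \ref{teo:ver}.
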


\begin{proof}
By the proof of Theorem \ref{teo:ver} (upon noticing that in our Markovian setting one has $u=U$ and $\widehat{v}=\widehat{V}$) it suffices to show that the right-continuous inverse of the stopping time $\tau^{\star}(q,y)= \ \inf\{t\geq 0 \ | \ Y^y_t \leq \hat{y}(Z^{q,y}_t) \}$ (which is optimal for $u(q,y)$, cf.\ \eqref{OStime}) coincides (up to a null set) with $\overline{\nu}^{\star}$.

Then, recall \eqref{taunumeno} from the proof of Theorem \ref{teo:ver}, fix $(x,y)\in\mathcal{O}$, take $t\ge 0$ arbitrary, and note that by \eqref{OStime} and \eqref{equivalences} we have $\mathbb{P}$-a.s.~the equivalences
\begin{align*}
&\tau^{\star}(q,y) \leq t  \  \Longleftrightarrow \ Y^y_{\theta} \leq \hat{y}(Z^{q,y}_{\theta}) \ \mbox{ for some } \theta \in [0,t] \ \Longleftrightarrow \\
& b(Y^y_{\theta}) \leq e^{q + (\delta - g)\theta - \int_0^{\theta}Y^y_sds} \ \mbox{ for some } \theta \in [0,t] \ \Longleftrightarrow\   \Big[ x - \inf_{0\leq s \leq t}\Big(b(Y^y_s)e^{-(\delta - g)s + \int_0^s Y^y_u du}\Big)\Big] \vee 0 \geq x - e^q \\
& \Longleftrightarrow\ \overline{\nu}^{\star}_t \geq x - e^q \Longleftrightarrow\ \tau^{\overline{\nu}^{\star}}_{+}(q) \leq t.
\end{align*}
Hence, $\tau^{\overline{\nu}^{\star}}_{+}(q)=\tau^{\star}(q,y)$ $\mathbb{P}$-a.s., and $\overline{\nu}^{\star}_{\cdot}$ is the right-continuous inverse of $\tau^{\star}(\cdot,y)$. Since $\overline{\nu}^{\star}_t \leq x$ a.s.\ for all $t\geq 0$, and $t \mapsto \overline{\nu}^{\star}_t$ is nondecreasing, it does follows from \eqref{nustar} that $\nu^{\star}$ is admissible. By arguing as in \emph{Step 2} of the proof of Theorem \ref{teo:ver} the claim follows.


\end{proof}


\section{Economic Conclusions and Related Problems}
\label{sec:optimalsol}

\subsection{Some Economic Conclusions}
\label{economics}

In this section we provide some comments about the economic implications of our findings.

From the first equation of \eqref{syst:XY}, and recalling \eqref{nustar}, we can write 
$$X^{x,y,\nu^{\star}}_t = e^{(\delta-g)t - \int_0^t Y_s ds}\big[x - \overline{\nu}^{\star}_t\big],$$
which, with regard to \eqref{OC-candidate}, shows that 
\begin{equation}
\label{cond1}
0 \leq X^{x,y,\nu^{\star}}_t \leq b(Y^y_t), \qquad t \geq 0,\,\,\mathbb{P}-a.s.
\eeq
Moreover, it is easy to see that we can express $\overline{\nu}^{\star}$ of \eqref{OC-candidate} as
\begin{equation}
\label{cond2}
\overline{\nu}^{\star}_t = \sup_{0\leq u \leq t} \Big(\frac{X^{x,y,0}_s - b(Y^y_s)}{X^{1,y,0}_s}\Big) \vee 0, \qquad \overline{\nu}^{\star}_{0-}=0.
\end{equation}

Equations \eqref{OC-candidate}, \eqref{cond1} and \eqref{cond2} allow us to draw some interesting conclusions about the optimal debt management policy suggested by our model.
\begin{itemize}
\item[(i)] If at initial time the level of the debt ratio $x$ is above $b(y)$, then an immediate lump-sum reduction of amplitude $(x-b(y))$ is optimal. 
\item[(ii)] When the cost of any additional unit of debt to the zero level is strictly positive (i.e.\ $C'(0)>0$), and the initial level of the inflation rate is sufficiently low (namely, $y \leq y^*$, $y^* > -\infty$), the optimal action is to immediately reduce the debt ratio to zero (cf.\ Proposition \ref{propertiesb}-(3)). This fact might be explained as follows. Negatively affecting the growth rate of the debt ratio, a low inflation increases the real burden of debts, and thus have bad effects on the economy. Then, if the economy faces low inflation (or deflation), and if the country is penalised by any, even small, level of debt\footnote{indeed, by convexity, if $C'(0)>0$ then $C(\varepsilon) \geq C'(0) \varepsilon > 0$, for any $\varepsilon>0$.}, according to our model it is optimal to immediately bring the debt ratio to zero, rather than waiting for an higher inflation rate.
\item[(iii)] At any $t\geq 0$, employing the optimal primary balance policy, the government keeps the debt ratio level below the inflation-dependent ceiling $b$. 
\item[(iv)] If the level of the debt ratio at time $t$ is below $b(Y_t)$, there is no need for interventions. The government should intervene to reduce its debt generating fiscal surpluses only at those (random) times $t$ for which $X_t \geq b(Y_t)$, any other intervention being sub-optimal (cf.\ \eqref{cond2}).
\item[(v)] The optimal cumulative debt reduction \eqref{OC-candidate} is given in terms of the minimal level that the inflation-dependent debt ceiling, suitably discounted, has reached over $[0,t]$. The discount rate is dynamic and does depend on the history of inflation. It is indeed $\int_0^{\cdot} (\delta-g - Y^y_u)du$.
\end{itemize}


\subsection{Two Related Classes of Problems}
\label{sec:relatedprobl}

Problem \eqref{eq:valueOC-general} shares a common mathematical structure with other optimisation problems arising in economic theory/mathematical finance, like problems of irreversible investment \cite{DP} with (stochastic) depreciation in the capital dynamics \cite{RS}, and optimal consumption choices under Hindy-Huang-Kreps (HHK) preferences (cf.\ \cite{BankRiedel1}, \cite{HH} and references therein). In this section we provide an informal brief introduction to these problems, so to highlight their similarities with the problem treated in this paper.

Let $(\Omega, \mathcal{F}, \mathbb{F}, \mathbb{P})$ be a given complete filtered probability space. In \emph{stochastic irreversible investment problems}, the manager of a firm producing a single good aims at expanding the company's production capacity, e.g., in order to meet an exogenous demand of the produced good. However, increasing the production capacity level, the firm incurs costs that are proportional to the amount of capacity expansion that has been made. Mathematically, the problem amounts to find an $\mathbb{F}$-adapted nondecreasing c\`adl\`ag process $I^{\star}$ minimising the total expected cost functional
\beq
\label{Irr}
\mathcal{J}(I):=\mathbb{E}\bigg[\int_0^{\infty} e^{-\rho t} f(D_t, X^{I}_t)dt + \kappa \int_0^{\infty}e^{-\rho t} dI_t\bigg].
\eeq
In \eqref{Irr} $f:\mathbb{R}\times \mathbb{R} \mapsto \mathbb{R}_+$ is a suitable convex instantaneous cost function; $D:=\{D_t, t \geq 0\}$ is the random demand of the produced good; $\kappa>0$ is the marginal cost of investment; $\rho>0$ is the manager's discount factor, and $X^{I}_t$ denotes the level of production capacity at time $t$ when the irreversible investment plan $I$ is followed. Assuming that the capital depreciates at a stochastic depreciation rate $\eta:=\{\eta_t, t \geq 0\}$, the dynamics of the production capacity can be modelled as
\beq
\label{capacity}
dX^{I}_t = -\eta_t X^{I}_t dt + dI_t, \quad t\geq 0, \qquad X^{\nu}_{0-}=x>0.
\eeq
Setting $h(\omega,t,x) := e^{-\rho t} f(D_t(\omega), x)$ and $\kappa_t:=\kappa e^{-\rho t}$, it is clear that the problem of minimising \eqref{Irr} over all suitable irreversible investment plans $I$'s, shares a similar structure with problem \eqref{eq:valueOC-general}. In fact, the only difference lies in the fact that in \eqref{capacity} the production capacity is increased, whereas in \eqref{dynX} the level of debt ratio must be reduced.

In the literature on \emph{optimal consumption with HHK preferences} (see \cite{BankRiedel1} for details and references), an economic agent aims at maximising her intertemporal expected utility by investing an initial wealth $w \geq 0$ in the financial market over a given time period. Following \cite{BankRiedel1}, we suppose that the agent can invest in at least one risky security and in a money market account with interest rate $r$. Assuming further that the market is complete, and denoting by $\psi:=\{\psi_t, t \geq 0\}$ a state-price density, the optimal consumption plan $C^{\star}$ (if it does exist) then solves the problem
\beq
\label{HHK}
\sup_{C}\mathbb{E}\bigg[\int_0^{\infty} U(t, X^C_t) dt\bigg],
\eeq
where the optimisation is taken over all the $\mathbb{F}$-adapted, nondecreasing and c\`adl\`ag processes such that $\mathbb{E}[\int_0^{\infty} \psi_t dC_t]\leq w$. Here $\mathbb{E}[\int_0^{\infty} \psi_t dC_t]$ is the minimal capital needed to finance a given consumption plan $C$. In \eqref{HHK} the instantaneous felicity function $U(t,\cdot)$ is a suitable concave function of the agent's current level of satisfaction
\beq
\label{satisfaction}
dX^{C}_t = -\vartheta_t X^{C}_t dt + dC_t, \quad t\geq 0, \qquad X^{C}_{0-}=x>0,
\eeq
for some process $\vartheta:=\{\vartheta_t, t\geq 0\}$ measuring the decay rate of satisfaction.
As we have already noticed for \eqref{capacity}, the main difference between the controlled dynamics \eqref{satisfaction} and \eqref{dynX} is that in \eqref{satisfaction} the level of satisfaction is increased by consumption, whereas in \eqref{dynX} the level of debt ratio is reduced.
In \cite{BankRiedel1} it is shown that the Lagrangian functional associated to such problem reads
\beq
\label{Lagrangian}
\mathcal{L}(C):=\mathbb{E}\bigg[\int_0^{\infty} U(t, X^C_t) dt\bigg] - \lambda\bigg(\mathbb{E}\bigg[\int_0^{\infty} \psi_t dC_t\bigg] - w\bigg),
\eeq
for a suitable Lagrange multiplier $\lambda \geq 0$. Given $\lambda>0$ and setting $h(\omega,t,x) := - U(t,x)$ and $\kappa_t:= - \lambda\psi_t$, maximising \eqref{Lagrangian} over all admissible irreversible consumption plans $C$'s relates to the minimisation problem \eqref{eq:valueOC-general}.

From the previous discussion it thus follows that the approach that we have followed in this paper might be suitably adopted to solve problems of optimal consumption choice under HHK preferences, and problems of stochastic irreversible investment plans with depreciating capital stock. In particular, once these problems are formulated in suitable Markovian settings, their optimal investment/consumption policy might be expressed in terms of the free boundary of an associated optimal stopping problem. We leave such interesting study for future research.

\bigskip

\textbf{Acknowledgments.} I thank the Associate Editor and three anonymous Referees for their pertinent comments which helped a lot to improve previous versions of this work. I also thank Tiziano De Angelis and Frank Riedel for useful discussions.


\appendix
\section{Some Proofs}
\label{someproofs}
\renewcommand{\theequation}{A-\arabic{equation}}

\emph{Proof of Proposition \ref{prop:verificoipotesi}}
\vspace{0.15cm}

The proof is organised in two steps.
\vspace{0.15cm}

\emph{Step 1.} We start showing that the deterministic process $\kappa_t:=\kappa\, e^{-\rho t}$, $\kappa>0$, satisfies Assumption \ref{ass:kappa}. It is clearly progressively measurable (being deterministic), positive and with continuous paths. Let $(x,y) \in (0,\infty) \times \mathbb{R}$ be given and fixed and recall $X^{x,y,0}$ from \eqref{syst:XY}. It then remains only to prove that the process $\{\kappa e^{-\rho t} X^{x,y,0}_t, t\geq 0\}$ is of class (D).

Thanks to an integration by parts, for any $t\geq 0$ one has 
$$\kappa e^{-\rho t} X^{x,y,0}_t = \kappa x e^{-\rho t + (\delta-g)t - \int_0^s Y^y_u du } = \kappa x \Big(1 + \int_{0}^{t} e^{-\rho t + (\delta-g)t - \int_0^s Y^y_u du }\big(\delta - g - Y^y_s - \rho\big) ds\Big), \quad \mathbb{P}-a.s.,$$
so that if $\mathbb{E}[\int_{0}^{\infty} e^{-\rho t + (\delta-g)t - \int_0^s Y^y_u du }\big|\delta - g - Y^y_s - \rho\big| ds] < \infty$ then the process $\{\kappa e^{-\rho t} X^{x,y,0}_t, t\geq 0\}$ is of class (D).

By \eqref{av-intY} and \eqref{var-intY}, simple estimates and Assumption \ref{ass:rho} imply that there exists $0<K_1(y)<\infty$ such that 
$$\mathbb{E}\bigg[\int_0^{\infty} e^{-\rho s + 2(\delta-g)t - 2\int_0^t Y^y_u du} ds\bigg] \leq K_1(y).$$ Then H\"older inequality with respect the product measure $\mathbb{P} \otimes e^{-\rho t} dt$ gives
\begin{align*}
\label{ui3}
& \mathbb{E}\bigg[\int_0^{\infty} e^{-\rho t + (\delta-g)t - \int_0^t Y^y_u du}|\delta - g - \rho - Y^y_t| dt\bigg] \leq \mathbb{E}\bigg[\int_0^{\infty} e^{-\rho t + 2(\delta-g)t - 2\int_0^t Y^y_u du} dt\bigg]^{\frac{1}{2}} \times \nonumber \\
& \mathbb{E}\bigg[\int_0^{\infty} e^{-\rho t}|\delta - g -\rho - Y^y_t|^2 dt\bigg]^{\frac{1}{2}} \leq
\sqrt{K_1(y)}\mathbb{E}\bigg[\int_0^{\infty} e^{-\rho t}|\delta - g -\rho - Y^y_t|^2 dt\bigg]^{\frac{1}{2}}.
\end{align*}
Since by the second equation of \eqref{syst:XY} one has $\mathbb{E}[\int_0^{\infty} e^{-\rho t}|\delta - g -\rho - Y^y_t|^2 dt] \leq K_2(1+ |y|^2)$, for some $K_2>0$, the proof is complete.

\vspace{0.15cm}

\emph{Step 2.} We now prove that if $C$ satisfies Assumption \ref{ass:c}, then $h(\omega,t,x):=e^{-\rho t}C(x)$ fulfills Assumption \ref{ass:h}. It clearly verifies \emph{(i)} and \emph{(ii)} of Assumption \ref{ass:h}. We now prove that \emph{(iii)} of Assumption \ref{ass:h} holds true as well.

Let $(x,y) \in (0,\infty) \times \mathbb{R}$ be given and fixed. By Assumption \ref{ass:c} one has
\begin{equation}
\label{eq:verificoipotesi-1}
\mathbb{E}_{(x,y)}\bigg[\int_0^{\infty} h(t,X^{0}_t) dt\bigg] =\mathbb{E}_{(x,y)}\bigg[\int_0^{\infty} e^{-\rho t} C(X^0_t) dt\bigg] \leq \int_0^{\infty} e^{-\rho t} K\Big(1 + \mathbb{E}_{(x,y)}\big[(X^0_t)^{\gamma}\big]\Big) dt,
\end{equation}
where the last step follows by Tonelli's Theorem. To evaluate the expectation in the right-hand side of \eqref{eq:verificoipotesi-1} notice that by \eqref{syst:XY} we can write for any $t\geq 0$
\begin{align}
\label{eq:verificoipotesi-2}
& \mathbb{E}_{(x,y)}\big[(X^0_t)^{\gamma}\big] = x^{\gamma} \exp\Big\{ \gamma(\delta - g)t -\gamma\mathbb{E}\Big[\int_0^t Y^y_u du\Big] + \frac{\gamma^2}{2}\Var\Big[\int_0^t Y^y_u du\Big]\Big\},
\end{align}
where the formula of the Laplace transform for the Gaussian random variable $\int_0^t Y^y_u du$ has been employed. Plugging \eqref{av-intY} and \eqref{var-intY} of Lemma \ref{lemm:Y} into \eqref{eq:verificoipotesi-2}, simple algebra and standard inequalities yield
\begin{equation}
\label{eq:verificoipotesi-3}
\mathbb{E}_{(x,y)}\big[(X^0_t)^{\gamma}\big] \leq x^{\gamma} e^{\frac{\gamma}{\theta}|y-\frac{a}{\theta}|}\exp\Big\{-\gamma t\Big(\frac{a}{\theta} - \frac{\gamma \sigma^2}{2 \theta^2} - \delta + g\Big)\Big\}.
\end{equation}
Employing the inequality above in \eqref{eq:verificoipotesi-1}, by Assumption \ref{ass:rho} we conclude $\mathbb{E}_{(x,y)}[\int_0^{\infty} h(t,X^{0}_t) dt] <\infty$. Analogously, by Assumption \ref{ass:c} we have that
\begin{eqnarray}
\label{eq:verificoipotesi-4}
&& \mathbb{E}_{(x,y)}\bigg[\int_0^{\infty} X^0_t h_x(t,X^{0}_t) dt\bigg] =\mathbb{E}_{(x,y)}\bigg[\int_0^{\infty} e^{-\rho t} X^0_t C'(X^0_t) dt\bigg] \nonumber \\
&& \leq \int_0^{\infty} e^{-\rho t} K_2\Big(\mathbb{E}_{(x,y)}\big[X^0_t\big] + \mathbb{E}_{(x,y)}\big[(X^0_t)^{\gamma}\big]\Big) dt < \infty,
\end{eqnarray}
where the last step is due again to Assumption \ref{ass:rho}. The proof is then completed.
\ep

\vspace{0.4cm}

\emph{Proof of Proposition \ref{prop:valuefinite}}
\vspace{0.15cm}

We start obtaining the two bounds of \eqref{eq:valueOCfinite}.
From \eqref{eq:J}, it is easy to see that $\mathcal{J}_{x,y}(\nu) \geq 0$ for any $\nu \in \mathcal{A}$. Hence $v(x,y) \geq 0$.
On the other hand, being the admissible policy $\nu \equiv 0$ a priori suboptimal, we have by Assumption \ref{ass:c}
\beq
\label{valuefinite1}
v(x,y) \leq \mathbb{E}_{(x,y)}\bigg[\int_0^{\infty} e^{-\rho t} C(X^0_t) dt \bigg] \leq K\bigg[\frac{1}{\rho} + \int_0^{\infty} e^{-\rho t} \mathbb{E}_{(x,y)}\big[(X^0_t)^{\gamma}\big] dt\bigg],
\eeq
where the last step follows by Tonelli's Theorem. By using \eqref{eq:verificoipotesi-3} the claim follows by a simple integration.
The bound $v(x,y)\leq \kappa x$ follows by noticing that the policy ``immediately reduce the debt ratio to $0$" is a priori suboptimal.

The property $v(0,y)=0$ for any $y \in \mathbb{R}$ is due to the fact that $\mathcal{A}(0,y)=\{\nu \equiv 0\}$ and therefore $v(0,y)=\mathcal{J}_{0,y}(0) = 0$.

Convexity of $x\mapsto v(x,y)$ follows by standard arguments employing the convexity of the set of admissible controls, the linearity of \eqref{dynX}, and the convexity of $C(\,\cdot\,)$.
\ep

\vspace{0.4cm}

\emph{Proof of Lemma \ref{lem:limitZeui}}
\vspace{0.15cm}

As for the limit, by nonnegativity of $e^{-\rho t + Z_t}$, $t\geq 0$, we can invoke Fatou's lemma and obtain
\beq
\label{Fatoulimit}
0 \leq \mathbb{E}_{(z,y)}\big[\liminf_{t \uparrow \infty} e^{-\rho t + Z_t}\big] \leq \liminf_{t \uparrow \infty}\mathbb{E}_{(z,y)}\big[e^{-\rho t + Z_t}\big],
\eeq
and the proof is complete if we show that $\liminf_{t \uparrow \infty}\mathbb{E}_{(z,y)}\big[e^{-\rho t+Z_t}\big] = 0$. To this end, notice that by \eqref{def:Z} we can write
\begin{align}
\label{fatou2}
& \mathbb{E}_{(z,y)}\big[e^{-\rho t + Z_t}\Big] = e^z e^{- (\rho - \delta + g)t} \mathbb{E}\Big[e^{-\int_0^t Y^y_s ds}\Big] \nonumber \\
& = e^z \exp\Big\{-(\rho - \delta + g)t - \mathbb{E}\Big[\int_0^t Y^y_s ds\Big] + \frac{1}{2}\Var\Big(\int_0^t Y^y_s ds\Big)\Big\}, 
\end{align}
where we have used that for any given $t\geq 0$ the random variable $\int_0^t Y^y_s ds$ is Gaussian (cf.\ Lemma \ref{lemm:Y}). Then, employing \eqref{av-intY} and \eqref{var-intY} in \eqref{fatou2} one has
$$\mathbb{E}_{(z,y)}\big[e^{-\rho t + Z_t}\big] \leq e^z \exp\Big\{-\Big(\rho - \delta + g + \frac{a}{\theta} - \frac{\sigma^2}{2 \theta^2}\Big)t - \big(y-\frac{a}{\theta}\big)\big(\frac{1-e^{-\theta t}}{\theta}\big)\Big\},$$
which clearly converges to zero as $t\uparrow \infty$ by Assumption \ref{ass:rho}.

To prove \eqref{ui1}, recall \eqref{syst:XY} and \eqref{Markov-set}, and notice that for any $t\geq 0$, $(x,y) \in (0,\infty) \times \mathbb{R}$ and $z \in \mathbb{R}$ one has
$$e^{Z^{z,y}_t} = \frac{e^z}{x} X^{x,y,0}_t \qquad \text{and\,\,then} \qquad \kappa e^{-\rho t + Z^{z,y}_{t}} = \frac{e^z}{x}\kappa e^{-\rho t} X^{x,y,0}_t.$$
Then arguing as in the proof of Proposition \ref{prop:verificoipotesi} one can easily obtain the claim.
\ep

\vspace{0.4cm}

\emph{Proof of Proposition \ref{propertiesb}}
\vspace{0.15cm}

\emph{1.} The first claim follows from the fact that the mapping $y \mapsto \frac{1}{x}u(\ln(x),y)$ (cf.\ \eqref{def:1suxu}) is nonincreasing. This indeed implies that if $(x,y_1) \in \mathcal{I}$, then $(x,y_2) \in \mathcal{I}$ for any $y_2 > y_1$. Also, by \eqref{equivalences} one has
\beq
\label{equivalences2}
\{y \in \mathbb{R}:\, x < b(y)\} = \{y \in \mathbb{R}:\, u(\ln(x),y) - x < 0\},
\eeq
for any given $x>0$. The set on the right-hand side above is open since it is the preimage of an open set via the continuous mapping $y \mapsto u(\ln(x),y) - x$ (cf.\ Proposition \ref{continuityOS}). Hence the set on the left-hand side of \eqref{equivalences2} is open as well and $y \mapsto b(y)$ is therefore lower-semicontinuous. Hence $b(\,\cdot\,)$ is left-continuous since it is nondecreasing.

\emph{2.} This claim follows from \eqref{equivalences} and the fact that $\hat{y}(z) \leq \frac{C'(e^z)}{\kappa} + \delta - g - \rho$ for any $z \in \mathbb{R}$ (cf.\ Proposition \ref{prop:preliminarybdy}).

\emph{3.} For $y^*>-\infty$, the fact that $b(y)=0$ for any $y \leq y^*$ follows by definition of $b$, upon noticing that $\hat{y}(\cdot) \geq y^*$. 
\ep

\vspace{0.4cm}

\emph{Proof of Proposition \ref{prop:1dimOS}}
\vspace{0.15cm}

\emph{(i)}\,\,Suppose $C'(0)>0$. Because for any arbitrary stopping time $\tau$ the mapping $y \mapsto \mathbb{E}[\int_0^{\tau} e^{-\rho t + (\delta - g) t - \int_0^{t} Y^y_s ds}C'(0) dt + \kappa(e^{-\rho \tau + (\delta - g) \tau - \int_0^{\tau} Y^y_s ds} - 1)]$ is continuous, it follows that $y \mapsto w(y)$ is upper semi-continuous (being the infimum of continuous functions). Recall then that $Y$ is positively recurrent and therefore it hits any point of $\mathbb{R}$ in finite time with probability one. Hence denoting by $\mathcal{C}_{w}:=\{y \in \mathbb{R}: w(y)<0\}$ and $\mathcal{S}_{w}:=\{y \in \mathbb{R}: w(y) = 0\}$, if $\mathcal{S}_w \neq \emptyset$ the first entry time of $Y$ into $\mathcal{S}_{w}$ is an optimal stopping time (cf.\ \cite{PeskShir}, Ch.\ 1, Sec.\ 2, Corollary 2.9). Notice that in fact $\mathcal{S}_{w} \neq \emptyset$. Indeed, if not, one would obtain from \eqref{OS-1dim}, by using \eqref{convention}, that for any $y \in \mathbb{R}$
\begin{eqnarray*}
&& 0 > w(y) = \mathbb{E}\bigg[\int_0^{\infty} e^{-\rho t + (\delta - g) t - y(\frac{1 - e^{-\theta t}}{\theta}) - \int_0^t \Xi_s ds}C'(0) dt\bigg] - \kappa, \nonumber 
\end{eqnarray*}
where we have set $\Xi_s := \frac{a}{\theta}(1 - e^{-\theta s}) + \sigma e^{-\theta s}\int_0^s e^{\theta u}dW_u$, $s\geq 0$.
Taking limits as $y \downarrow -\infty$ in the last expected value above, and invoking the monotone convergence theorem, we reach a contradiction and we conclude that $\mathcal{S}_{w} \neq \emptyset$.

From \eqref{OS-1dim} it is easy to be convinced that $y \mapsto w(y)$ is nonincreasing. Then setting $y^*:=\sup\{y \in \mathbb{R}:\, w(y) \geq 0\}$ we have 
$$\mathcal{C}_{w}:=\{y \in \mathbb{R}: y > y^*\} \quad \text{and} \quad \mathcal{S}_{w}:=\{y \in \mathbb{R}: y \leq  y^*\},$$
and $\sigma^*(y):=\inf\{t\geq 0:\, Y^y_t \leq y^*\}$ is optimal for \eqref{OS-1dim}. Also, using \eqref{byparts} in \eqref{OS-1dim} it is clear that $\{y \in \mathbb{R}:\, \delta - g - \rho - y < 0\} \subseteq \mathcal{C}_{w}$, and therefore $\mathcal{S}_{w} \subseteq \{y \in \mathbb{R}:\, y \leq \delta - g - \rho\}$; i.e.\ $y^* \leq \delta - g - \rho$.

By standard arguments (that we skip here in the interest of lenght), it is possible to show that $\limsup_{\varepsilon \downarrow 0} \frac{w(y^*+\varepsilon) - w(y^*)}{\varepsilon} \leq 0 \leq \liminf_{\varepsilon \downarrow 0} \frac{w(y^*+\varepsilon) - w(y^*)}{\varepsilon}$. Hence $w(\,\cdot\,)\in C^1(\mathbb{R})$, and then $w_{yy}(\,\cdot\,)$ is locally bounded at $y^*$. As a consequence, the probabilistic representation
\begin{align}
\label{rappresentationw2}
w(y)=\mathbb{E}\bigg[\int_0^{\infty} e^{-\rho t + (\delta - g) t - \int_0^t Y^y_s ds}\Big(C'(0) + \kappa(\delta - g - \rho - Y^y_t)\Big)\mathds{1}_{\{Y^y_t > y^*\}} dt\bigg]
\end{align}
holds by It\^o-Tanaka's formula.
Since \eqref{rappresentationw2} holds for any $y \in \mathbb{R}$, then by evaluating \eqref{rappresentationw2} for $y = y^*$, one easily finds that $y^*$ solves \eqref{inteq-ystar}. Finally, arguments similar to (but simpler than) those employed in the proof of Theorem \ref{th:integral} show that \eqref{inteq-ystar} admits a unique solution in $(-\infty, \delta-g-\rho)$.

\vspace{0.08cm} 

\emph{(ii)}\,\,If $C'(0)=0$, by \eqref{convention} it is clear that it is never optimal to stop in finite time in \eqref{OS-1dim}. The claim thus follows. 
\ep


\section{Auxiliary Results}
\label{auxiliaryres}
\renewcommand{\theequation}{B-\arabic{equation}}

\begin{lemma}
\label{lemm:Y}
Let $Y$ be the Ornstein-Uhlenbeck process of \eqref{dynY}. Then for any $t\geq 0$ one has
\beq
\label{av-Y}
\mathbb{E}[Y^y_t] = ye^{-\theta t} + \frac{a}{\theta}(1 - e^{-\theta t})
\eeq
and
\beq
\label{var-Y}
\mathbb{E}[(Y^y_t)^2] = \big(ye^{-\theta t} + \frac{a}{\theta}(1 - e^{-\theta t})\big)^2 + \frac{\sigma^2}{2\theta}(1 - e^{-2\theta t}).
\eeq

\noindent Moreover, the integral process $\{\int_0^t Y^y_u du, t \geq 0\}$ is Gaussian and such that
\beq
\label{av-intY}
\mathbb{E}\bigg[\int_0^t Y^y_u du\bigg] = \frac{a}{\theta}t + \left(y-\frac{a}{\theta}\right)\left(\frac{1- e^{-\theta t}}{\theta}\right).
\eeq
and
\beq
\label{var-intY}
\Var\bigg[\int_0^t Y^y_u du\bigg] = - \frac{\sigma^2}{2\theta^3}(1-e^{-\theta t})^2 + \frac{\sigma^2}{\theta^2}\left(t - \left(\frac{1- e^{-\theta t}}{\theta}\right) \right). 
\eeq
\end{lemma}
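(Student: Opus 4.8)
The plan is to read off every identity from the explicit representation of $Y^y$ in the second line of \eqref{syst:XY}, namely
\beq
Y^y_t = ye^{-\theta t} + \frac{a}{\theta}(1 - e^{-\theta t}) + \sigma e^{-\theta t}\int_0^t e^{\theta s}\,dW_s,\qquad t\ge 0,
\eeq
combined with It\^o's isometry and Fubini's theorem (in both its ordinary and its stochastic version). Identity \eqref{av-Y} is immediate, since $\{\int_0^t e^{\theta s}dW_s,\,t\ge0\}$ is a martingale null at $0$. For \eqref{var-Y} I would write $\mathbb{E}[(Y^y_t)^2]=(\mathbb{E}[Y^y_t])^2+\Var(Y^y_t)$ and compute, via It\^o's isometry, $\Var(Y^y_t)=\sigma^2 e^{-2\theta t}\int_0^t e^{2\theta s}\,ds=\frac{\sigma^2}{2\theta}(1-e^{-2\theta t})$; substituting \eqref{av-Y} then yields \eqref{var-Y}.

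For the integral process I would first observe that $\{\int_0^t Y^y_u\,du,\,t\ge 0\}$ is obtained from the Brownian path by a pathwise-continuous linear operation, hence it is a Gaussian process. To identify its mean and variance I would apply the ordinary Fubini theorem to the bounded deterministic drift part and the stochastic Fubini theorem to the martingale part, using $\int_r^t e^{-\theta u}\,du=\frac{1}{\theta}(e^{-\theta r}-e^{-\theta t})$, to obtain
\beq
\int_0^t Y^y_u\,du = \frac{a}{\theta}t + \Big(y-\frac{a}{\theta}\Big)\frac{1-e^{-\theta t}}{\theta} + \frac{\sigma}{\theta}\int_0^t \big(1 - e^{-\theta(t-r)}\big)\,dW_r .
\eeq
Taking expectations gives \eqref{av-intY}. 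By It\^o's isometry and the substitution $w=t-r$,
\beq
\Var\Big[\int_0^t Y^y_u\,du\Big] = \frac{\sigma^2}{\theta^2}\int_0^t\big(1-e^{-\theta w}\big)^2\,dw = \frac{\sigma^2}{\theta^2}\Big(t - \frac{2}{\theta}(1-e^{-\theta t}) + \frac{1}{2\theta}(1-e^{-2\theta t})\Big),
\eeq
and rewriting $1-e^{-2\theta t}=2(1-e^{-\theta t})-(1-e^{-\theta t})^2$ turns the right-hand side into the expression claimed in \eqref{var-intY}.

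None of these steps is genuinely hard; the only point deserving a word of justification is the use of the stochastic Fubini theorem, which applies here because the integrand $(r,u)\mapsto \mathds{1}_{\{r\le u\}}\,e^{\theta(r-u)}$ is bounded on $[0,t]^2$. If one prefers to avoid stochastic Fubini, an equivalent route for \eqref{av-intY}--\eqref{var-intY} is to integrate the mean function \eqref{av-Y} directly and to use $\Var[\int_0^t Y^y_u\,du]=\int_0^t\int_0^t \mathrm{Cov}(Y^y_u,Y^y_v)\,du\,dv$ together with the standard Ornstein--Uhlenbeck covariance $\mathrm{Cov}(Y^y_u,Y^y_v)=\frac{\sigma^2}{2\theta}\big(e^{-\theta|u-v|}-e^{-\theta(u+v)}\big)$, which follows from It\^o's isometry; this replaces the stochastic Fubini argument by a slightly longer but completely elementary double integration, and Gaussianity of the integral process can then be recovered from joint Gaussianity of $(Y^y_{u_1},\dots,Y^y_{u_n})$ and passage to the limit in Riemann sums.
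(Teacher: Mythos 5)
Your proof is correct in every detail. The paper's own ``proof'' is only a citation: it states that \eqref{av-Y} and \eqref{var-Y} are ``easily obtained from \eqref{dynY}'' and refers the reader to page~122 of Jeanblanc--Yor--Chesney for \eqref{av-intY}--\eqref{var-intY}, giving no computation at all. What you have written out is precisely the standard derivation that those references compress: the explicit solution of the linear SDE, It\^o isometry for the pointwise variance, stochastic Fubini (the integrand $\mathds{1}_{\{r\le u\}}e^{\theta(r-u)}$ being bounded on $[0,t]^2$ is indeed the right sufficient condition) to exchange $\int_0^t\,du$ with $\int_0^u\,dW_s$ and recover a single Wiener integral, and then It\^o isometry once more for the integrated variance. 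Your algebraic rewriting $1-e^{-2\theta t}=2(1-e^{-\theta t})-(1-e^{-\theta t})^2$ correctly reconciles your closed form with the slightly unusual shape in which \eqref{var-intY} is stated. The alternative route you sketch at the end -- integrating the OU covariance kernel $\frac{\sigma^2}{2\theta}\big(e^{-\theta|u-v|}-e^{-\theta(u+v)}\big)$ over $[0,t]^2$ and obtaining Gaussianity from Riemann sums of jointly Gaussian vectors -- is equally valid and avoids stochastic Fubini at the cost of a longer double integration; either is a legitimate, self-contained replacement for the textbook citation.
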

\begin{proof}
\eqref{av-Y} and \eqref{var-Y} are easily obtained from \eqref{dynY}. On the other hand, for \eqref{av-intY} and \eqref{var-intY} we refer to page 122 of \cite{JYC}.
\end{proof}

\vspace{0.25cm}

\begin{lemma}
\label{inverse}
Recall \eqref{taustargeneral}. One has that $z \mapsto \tau^*(z)$ is a.s.\ decreasing.
\end{lemma}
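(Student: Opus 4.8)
The plan is to show that for $z_1 < z_2$ one has $\tau^*(z_2) \le \tau^*(z_1)$ almost surely, and in fact a strict inequality on the event where $\tau^*(z_1)>0$. Recall from \eqref{greekzeta} that
$$\zeta_t(z) = \int_0^t e^{z + \int_0^s \beta_u du}\, h_x\big(t, e^{z + \int_0^s \beta_u du}\big)\, ds + \kappa_t\, e^{z + \int_0^t \beta_u du},$$
and from \eqref{taustargeneral} that $\tau^*(z) = \inf\{t \ge 0 : U_t(z) \ge \zeta_t(z)\}$, where $U_t(z)$ is the c\`adl\`ag value process of the optimal stopping problem \eqref{valueOS-general}. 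The key structural fact is that both the running cost density and the terminal cost in $\zeta_t(z)$ are, after factoring out $e^z$, multiplied by functions of $z$ that are increasing in $z$: indeed $e^z \mapsto e^z \kappa_t e^{\int_0^t\beta_u du}$ is increasing, and since $h_x(t,\cdot)$ is nondecreasing (Assumption~\ref{ass:h}(ii)), the map $z \mapsto e^{z+\int_0^s\beta_u du} h_x(t, e^{z+\int_0^s\beta_u du})$ is nondecreasing as well. So the whole integrand and terminal term of $\zeta$ are nondecreasing in $z$.

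First I would make precise the effect of this monotonicity on the value process. From the representation \eqref{def:1suxu-general}, writing $x = e^z$, one sees that $\frac1x U(\ln x) = \inf_{\tau\ge 0}\mathbb{E}[\int_0^\tau e^{\int_0^t\beta_s ds} h_x(t, x e^{\int_0^t\beta_s ds})dt + \kappa_\tau e^{\int_0^\tau\beta_s ds}]$, and since $x \mapsto h_x(t, x e^{\int_0^t\beta_s ds})$ is nondecreasing, the map $x \mapsto \frac1x U(\ln x)$ is nondecreasing; equivalently $z \mapsto e^{-z}U_0(z)$ is nondecreasing. The same argument applied conditionally (at a generic time $t$, using the Markov-type/tower structure of \eqref{valueOS-general}) gives that $z \mapsto e^{-z}U_t(z)$ is nondecreasing, while trivially $z\mapsto e^{-z}\zeta_t(z)$ is nondecreasing too. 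Now the point is to compare the \emph{gap} $U_t(z) - \zeta_t(z)$, or rather $e^{-z}(U_t(z) - \zeta_t(z))$, across different $z$: for $\tau^*$ I want to show the set $\{t : U_t(z)\ge\zeta_t(z)\}$ grows (pointwise, $\omega$ by $\omega$) as $z$ increases.

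The cleanest route is to argue directly on the stopping times. Fix $z_1<z_2$ and let $\tau = \tau^*(z_1)$. On the event $\{\tau<\infty\}$ I would show $U_\tau(z_2) \ge \zeta_\tau(z_2)$, i.e.\ $\tau$ is already in the stopping region for $z_2$, hence $\tau^*(z_2)\le\tau = \tau^*(z_1)$. To do this: at $\tau$ we have $U_\tau(z_1) \ge \zeta_\tau(z_1)$; since stopping is optimal at a point of the stopping region, $U_\tau(z_1) = \zeta_\tau(z_1)$. I then need $U_\tau(z_2) \le \zeta_\tau(z_2)$ forces equality, which would follow if I can show that on the stopping region the ``marginal value of waiting'' is nonpositive uniformly, but more simply: by the $\varepsilon$-optimal stopping time comparison used throughout the paper (e.g.\ in the proof of Proposition~\ref{prop:preliminarybdy}(ii)), for any stopping time $\sigma \ge \tau$,
$$\mathbb{E}\big[\zeta_\sigma(z_2) - \zeta_\tau(z_2)\,\big|\,\mathcal{F}_\tau\big] \ge e^{z_2-z_1}\,\mathbb{E}\big[\zeta_\sigma(z_1) - \zeta_\tau(z_1)\,\big|\,\mathcal{F}_\tau\big] \ge 0,$$
using the multiplicative scaling $\zeta_t(z_2) - e^{z_2-z_1}\zeta_t(z_1) = \int_0^t e^{z_2+\int_0^s\beta}[h_x(t,e^{z_2+\int}) - h_x(t,e^{z_1+\int})]ds \ge 0$ together with optimality of stopping at $\tau$ for the $z_1$-problem (which makes the inner expectation nonnegative). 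This shows no stopping time beats $\tau$ for the $z_2$-problem either, so $U_\tau(z_2) = \zeta_\tau(z_2)$ and $\tau^*(z_2)\le\tau$. For the a.s.\ \emph{strict} decrease on $\{\tau^*(z_1)>0\}$, I would run the symmetric argument: on $\{\tau^*(z_2)>0\}$, at any $t<\tau^*(z_2)$ one has $U_t(z_2)<\zeta_t(z_2)$, i.e.\ waiting strictly helps for $z_2$; using the reverse scaling inequality (now $\zeta_t(z_1) \le e^{z_1-z_2}\zeta_t(z_2)$ is false in the needed direction, so instead bound the $z_1$-gap below by the $z_2$-gap times $e^{z_1-z_2}$ after subtracting the nonnegative difference) one transfers strict suboptimality of immediate stopping from $z_2$ to $z_1$, giving $t < \tau^*(z_1)$; hence $\tau^*(z_2)\le\tau^*(z_1)$ with the strict-decrease statement following from the fact that $h_x(t,\cdot)$ is \emph{strictly} increasing (Assumption~\ref{ass:h}(ii) says $h$ strictly convex, so $h_x$ strictly increasing), which makes the scaling inequality strict on any event of positive measure.

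The main obstacle I anticipate is handling the case $\tau^*(z_1)=+\infty$ and, more delicately, making the ``strict'' part rigorous: the scaling inequality $\zeta_t(z_2) - e^{z_2-z_1}\zeta_t(z_1)\ge 0$ is only strict when the controlled state $e^{z+\int_0^s\beta_u du}$ visits a region where $h_x$ genuinely increases, which requires a short argument that the process $\int_0^s\beta_u du$ (here an OU-type integral with a.s.\ absolutely continuous, non-degenerate law) spends positive expected time at levels where strict convexity bites — this is where Assumption~\ref{ass:h}(ii) and the non-degeneracy of $\beta$ (in the general setting, just the integrability hypothesis; in the Markovian setting, $\sigma>0$) are used. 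A clean way to sidestep subtleties is to note that Lemma~\ref{inverse} is only invoked to define the generalised inverse of $z\mapsto\tau^*(\ln z)$, for which monotone (not necessarily strictly) decreasing already suffices; so I would prove the monotone statement carefully via the stopping-time comparison above and remark that strictness follows from strict convexity of $h$ whenever the controlled dynamics are non-degenerate.
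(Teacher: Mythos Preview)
Your argument is correct and rests on the same underlying mechanism as the paper's, namely the multiplicative scaling that, after factoring out $e^{z+\int_0^t\beta_u\,du}$, leaves $z$ appearing only through the argument of $h_x$. However, your route is considerably more elaborate than necessary. The paper's proof is a two-line observation: since $U_t(z)\ge\zeta_t(z)$ is equivalent to $\essinf_{\tau\ge t}\mathbb{E}[\zeta_\tau(z)-\zeta_t(z)\,|\,\mathcal{F}_t]\ge 0$, one divides through by the positive $\mathcal{F}_t$-measurable factor $e^{z+\int_0^t\beta_u\,du}$ and obtains the equivalent condition
\[
\essinf_{\tau\ge t}\mathbb{E}\Big[\int_t^\tau e^{\int_t^s\beta_u\,du}\,h_x\big(s,e^{z+\int_0^s\beta_u\,du}\big)\,ds+\kappa_\tau e^{\int_t^\tau\beta_u\,du}\,\Big|\,\mathcal{F}_t\Big]\ge\kappa_t,
\]
whose left-hand side is monotone in $z$ because $h_x(s,\cdot)$ is increasing. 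Hence the stopping sets are nested in $z$ and $\tau^*(z_2)\le\tau^*(z_1)$ whenever $z_1<z_2$. This is exactly the content of your comparison $\mathbb{E}[\zeta_\sigma(z_2)-\zeta_\tau(z_2)\,|\,\mathcal{F}_\tau]\ge e^{z_2-z_1}\mathbb{E}[\zeta_\sigma(z_1)-\zeta_\tau(z_1)\,|\,\mathcal{F}_\tau]$, but obtained directly without the detour through $\varepsilon$-optimal stopping times or the separate discussion of the value process $U_t$.

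Two further remarks. First, your effort on \emph{strict} monotonicity is unnecessary: as you yourself note, only the generalised inverse of $\tau^*(\ln(\cdot))$ is used downstream, for which nonincreasing suffices; the paper's proof in fact delivers only nonincreasing, and the word ``decreasing'' in the statement should be read in that sense. Second, your appeal to ``Markov-type/tower structure'' for the monotonicity of $e^{-z}U_t(z)$ is superfluous once one works directly with the stopping-set reformulation above, which is valid in the fully general (non-Markovian) setting of Section~\ref{sec:verification-general}.
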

\begin{proof}
By employing the definitions \eqref{greekzeta} and \eqref{valueOS-general}, it is easy to see that one has a.s.
\begin{align*}
& \{t\geq 0:\,U_t(z) \geq \zeta_t(z)\} \\
& = \Big\{t\geq 0:\,\essinf_{\tau\geq t}\mathbb{E}\Big[\int_t^{\tau} e^{\int_0^s \beta_u(\omega) ds}\, h_x(\omega, t, e^{z + \int_0^s \beta_u(\omega) du}) ds + \kappa_{\tau}(\omega)\, e^{\int_t^{\tau} \beta_u(\omega) du}\Big|\mathcal{F}_t\Big] \geq \kappa_t\Big\}.
\end{align*}
The latter equivalence and the fact that $x \mapsto h_x(t,x)$ is strictly increasing finally yield the claimed monotonicity of $\tau^*(\,\cdot\,)$.
\end{proof}

\vspace{0.25cm}

\begin{lemma}
\label{lemm:limits}
Take $R>0$, let $u$ as in \eqref{valueOS}, and define $\tau_R:=\inf\{t\geq 0:\, |Y_t| \geq R\,\,\text{or}\,\,|Z_t| \geq R\}$ under $\mathbb{P}_{(z,y)}$. Then one has 
\begin{align}
\label{limiting}
& \lim_{T\uparrow \infty} \lim_{R\uparrow \infty}\mathbb{E}_{(z,y)}\bigg[e^{-\rho(\tau_R \wedge T)}u(Z_{\tau_R \wedge T},Y_{\tau_R \wedge T}) + \int_{0}^{\tau_R\wedge T}e^{-\rho s + Z_s}C'(e^{Z_s})\mathds{1}_{\{Y_s > \hat{y}(Z_s)\}} ds\bigg] \nonumber \\
& \hspace{1.7cm}-\mathbb{E}_{(z,y)}\bigg[\int_{0}^{\tau_R \wedge T}e^{-\rho s + Z_s}\kappa \big(\delta-g-\rho-Y_s\big)\mathds{1}_{\{Y_s \leq \hat{y}(Z_s)\}} ds\bigg]\\
&=\mathbb{E}_{(z,y)}\bigg[\int_0^{\infty}e^{-\rho s + Z_s}C'(e^{Z_s})\mathds{1}_{\{Y_s > \hat{y}(Z_s)\}} ds-\int_{0}^{\infty}e^{-\rho s + Z_s}\kappa \big(\delta-g-\rho-Y_s\big)\mathds{1}_{\{Y_s \leq \hat{y}(Z_s)\}} ds\bigg].\nonumber
\end{align}
\end{lemma}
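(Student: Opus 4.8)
The plan is to pass to the limit in two stages --- first $R\uparrow\infty$ with $T$ fixed, and then $T\uparrow\infty$ --- justifying each interchange by dominated convergence, with dominating random variables furnished by \eqref{ui1} of Lemma \ref{lem:limitZeui}.

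First I would observe that the two-dimensional diffusion $(Z^{z,y},Y^y)$ is non-explosive: $Y$ is an Ornstein--Uhlenbeck process, and $Z$ is its (affinely shifted) time integral, so neither component blows up in finite time. Hence $\tau_R\uparrow\infty$ $\mathbb{P}_{(z,y)}$-a.s.\ as $R\uparrow\infty$, and therefore $\tau_R\wedge T\uparrow T$ a.s.\ for each fixed $T>0$. For the two running-cost terms, the integrands are bounded in absolute value, uniformly in $R$ and $T$, by $e^{-\rho s+Z_s}C'(e^{Z_s})$ and by $\kappa e^{-\rho s+Z_s}|\delta-g-\rho-Y_s|$ respectively, and both of these are $\mathbb{P}_{(z,y)}\otimes ds$-integrable by \eqref{ui1}; dominated convergence then lets us take $R\uparrow\infty$ inside the expectations, replacing $\tau_R\wedge T$ by $T$. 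For the boundary term I would use that $0\leq u(z,y)\leq\kappa e^{z}$ together with the integration by parts \eqref{byparts}, which gives $e^{-\rho(\tau_R\wedge T)+Z_{\tau_R\wedge T}}\leq e^{z}+\int_0^\infty e^{-\rho s+Z_s}|\delta-g-\rho-Y_s|\,ds=:G$, with $G\in L^1(\mathbb{P}_{(z,y)})$ again by \eqref{ui1}. Thus $e^{-\rho(\tau_R\wedge T)}u(Z_{\tau_R\wedge T},Y_{\tau_R\wedge T})\leq\kappa G$ uniformly in $R$, and, combined with the a.s.\ convergence $(Z_{\tau_R\wedge T},Y_{\tau_R\wedge T})\to(Z_T,Y_T)$ and the continuity of $u$ (Proposition \ref{continuityOS}), dominated convergence shows that, after the inner limit $R\uparrow\infty$, the left-hand side of \eqref{limiting} equals
\begin{align*}
\lim_{T\uparrow\infty}\bigg(&\mathbb{E}_{(z,y)}\Big[e^{-\rho T}u(Z_T,Y_T)+\int_0^{T}e^{-\rho s+Z_s}C'(e^{Z_s})\mathds{1}_{\{Y_s>\hat{y}(Z_s)\}}\,ds\Big]\\
&-\,\mathbb{E}_{(z,y)}\Big[\int_0^{T}e^{-\rho s+Z_s}\kappa(\delta-g-\rho-Y_s)\mathds{1}_{\{Y_s\leq\hat{y}(Z_s)\}}\,ds\Big]\bigg).
\end{align*}

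It then remains to let $T\uparrow\infty$. The two integral terms converge to the corresponding integrals over $[0,\infty)$ by dominated convergence, with the same dominating functions as above. For the boundary term, $0\leq\mathbb{E}_{(z,y)}[e^{-\rho T}u(Z_T,Y_T)]\leq\kappa\,\mathbb{E}_{(z,y)}[e^{-\rho T+Z_T}]$, and the estimate on $\mathbb{E}_{(z,y)}[e^{-\rho t+Z_t}]$ obtained in the proof of Lemma \ref{lem:limitZeui} (valid under Assumption \ref{ass:rho}) shows that this upper bound tends to $0$ as $T\uparrow\infty$. Putting the pieces together yields \eqref{limiting}.

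The only mildly delicate point is producing a single $R$- and $T$-independent integrable majorant for the boundary term; this is precisely what \eqref{byparts} combined with \eqref{ui1} supplies. Everything else is a routine double application of dominated convergence.
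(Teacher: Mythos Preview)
Your argument is correct. The overall two--stage strategy ($R\uparrow\infty$, then $T\uparrow\infty$) and the treatment of the two running--cost integrals match the paper. Where you differ is in the handling of the boundary term $e^{-\rho(\tau_R\wedge T)}u(Z_{\tau_R\wedge T},Y_{\tau_R\wedge T})$. The paper does \emph{not} identify its $R$--limit; instead it factors $e^{-\rho(\tau_R\wedge T)}=e^{-(\rho-\rho_o)(\tau_R\wedge T)}e^{-\rho_o(\tau_R\wedge T)}$ and applies H\"older/Jensen together with the auxiliary fourth--moment estimate $\mathbb{E}_{(z,y)}[\int_0^\infty e^{-\rho_o s+4Z_s}\,ds]<\infty$ (available thanks to the constant $\rho_o$ in Assumption~\ref{ass:rho}) to bound the limsup by $\kappa e^{z}e^{-\rho T}+K_3(z,y)e^{-\rho T/2}$; the squeeze is then closed by using that the pre--limit expression equals $u(z,y)$ from \eqref{repru-2}. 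Your route is more elementary: the single $L^1$ majorant $\kappa G=\kappa\big(e^{z}+\int_0^\infty e^{-\rho s+Z_s}|\delta-g-\rho-Y_s|\,ds\big)$, supplied by \eqref{byparts} and \eqref{ui1}, together with the continuity of $u$ (Proposition~\ref{continuityOS}), gives dominated convergence directly to $\mathbb{E}_{(z,y)}[e^{-\rho T}u(Z_T,Y_T)]$, which then vanishes via $\kappa\,\mathbb{E}_{(z,y)}[e^{-\rho T+Z_T}]\to 0$. This avoids both the parameter $\rho_o$ and the moment bounds \eqref{stimaconv1}--\eqref{stimaconv2}, and it makes the lemma self--contained (not relying on \eqref{repru-2}); the trade--off is that you invoke the continuity of $u$, which the paper's estimate does not need.
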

\begin{proof}
Preliminary notice that $\tau_R \wedge T \uparrow T$ when $R\uparrow \infty$. We now analyse the three addends of the expectation in the left-hand side of \eqref{limiting} separately.

\begin{itemize}
\item[(i)] Notice that $0 \leq e^{-\rho(\tau_R \wedge T)}u(Z_{\tau_R \wedge T},Y_{\tau_R \wedge T}) \leq \kappa e^{-\rho(\tau_R \wedge T) + Z_{\tau_R \wedge T}}$ $\mathbb{P}_{(z,y)}$-a.s., because $0 \leq u(z,y) \leq \kappa e^z$ for any $(z,y) \in \mathbb{R}^2$, and that  
$$e^{-\rho(\tau_R \wedge T) + Z_{\tau_R \wedge T}} = e^{-\rho(\tau_R \wedge T)}\Big(e^z + \int_0^{\tau_R \wedge T}e^{Z_s}\big(\delta - g - Y_s\big)ds\Big),$$
by an integration by parts.
Combining these two facts, denoting by $L>0$ a suitable constant independent of $R$ and $T$, and recalling Assumption \ref{ass:rho} we can write
\begin{align}
\label{stimaconv0}
& 0 \leq \mathbb{E}_{(z,y)}\Big[e^{-\rho(\tau_R \wedge T)}u(Z_{\tau_R \wedge T},Y_{\tau_R \wedge T})\Big] \leq \kappa \mathbb{E}_{(z,y)}\bigg[e^{-\rho(\tau_R \wedge T)}\Big(e^z + \int_0^{\tau_R \wedge T}e^{Z_s}\big(\delta - g - Y_s\big)ds\Big)\bigg] \nonumber \\
& \leq \kappa e^z \mathbb{E}_{(z,y)}\Big[e^{-\rho(\tau_R \wedge T)}\Big] + \kappa \mathbb{E}_{(z,y)}\bigg[e^{-(\rho-\rho_o)(\tau_R \wedge T)}\int_0^{\tau_R \wedge T}e^{- \rho_o (\tau_R \wedge T)}|\delta-g-Y_s|\, e^{Z_s} ds\bigg] \nonumber \\
& \leq \kappa e^z \mathbb{E}_{(z,y)}\Big[e^{-\rho(\tau_R \wedge T)}\Big] + \kappa \mathbb{E}_{(z,y)}\bigg[e^{-(\rho-\rho_o)(\tau_R \wedge T)}\int_0^{\tau_R \wedge T}e^{- \rho_o s}|\delta-g-Y_s|\, e^{Z_s} ds\bigg] \\
& \leq \kappa e^z \mathbb{E}_{(z,y)}\Big[e^{-\rho(\tau_R \wedge T)}\Big] + L\,\kappa\,\mathbb{E}_{(z,y)}\Big[e^{- 2 (\rho-\rho_o)(\tau_R \wedge T)}\Big]^{\frac{1}{2}} \mathbb{E}_{(z,y)}\bigg[\int_0^{\infty}e^{-\rho_o s}|\delta-g-Y_s|^4 ds\bigg]^{\frac{1}{4}} \nonumber  \\
& \hspace{1.5cm} \times \mathbb{E}_{(z,y)}\bigg[\int_0^{\infty}e^{-\rho_o s + 4Z_s} ds\bigg]^{\frac{1}{4}}, \nonumber 
\end{align}
where for the last step we have used H\"older inequality with respect to the measure $\mathbb{P}_{(z,y)}$, Jensen inequality with respect to the measure $\frac{1}{\rho_o}e^{-\rho_o s}ds$, and again H\"older inequality, but now with respect to the measure $\mathbb{P}_{(z,y)}\otimes \frac{1}{\rho_o} e^{-\rho_o s}ds$.

From the second equation in \eqref{syst:XY} it is easy to see that 
\begin{equation}
\label{stimaconv1}
\mathbb{E}_{(z,y)}\bigg[\int_0^{\infty}e^{-\rho_o s}|\delta-g-Y_s|^4 ds\bigg]^{\frac{1}{4}} \leq K_1(y),
\end{equation}
for some $0<K_1(y)<\infty$.
On the other hand,
\begin{align}
\label{stimaconv2}
& \mathbb{E}_{(z,y)}\bigg[\int_0^{\infty}e^{-\rho_o s + 4Z_s} ds\bigg] = \int_0^{\infty}e^{-\rho_o s} \mathbb{E}_{(z,y)}\big[e^{4Z_s}\big] ds \nonumber \\
& =e^{4z}\int_0^{\infty}\exp\Big\{-\rho_o s + 4(\delta-g)s - 4\mathbb{E}\Big[\int_0^s Y^y_u du\Big] + 8 \Var\Big[\int_0^s Y^y_u du\Big]\Big\} ds \\
& \leq K_2(z,y), \nonumber 
\end{align}
for some $0 < K_2(z,y) < \infty$, independent of $R$ and $T$. The last inequality above is due the fact that $\rho_o > 4[\delta - g - \frac{a}{\theta} + \frac{2\sigma^2}{\theta^2}] \vee 0$ (cf.\ Assumption \ref{ass:rho}), upon using \eqref{av-intY} and \eqref{var-intY} from Appendix \ref{auxiliaryres} and employing simple estimates.

Thanks to \eqref{stimaconv1} and \eqref{stimaconv2} we can then continue from \eqref{stimaconv0} by writing
\begin{align*}
\label{stimaconv3}
& 0 \leq \mathbb{E}_{(z,y)}\Big[e^{-\rho(\tau_R \wedge T)}u(Z_{\tau_R \wedge T},Y_{\tau_R \wedge T})\Big] \leq \kappa e^z\,\mathbb{E}_{(z,y)}\Big[e^{-\rho(\tau_R \wedge T)}\Big] + K_3(z,y)\,\mathbb{E}_{(z,y)}\Big[e^{-\rho(\tau_R \wedge T)}\Big]^{\frac{1}{2}},
\end{align*}
for some $0<K_3(z,y) < \infty$ independent of $R$ and $T$. Taking now limits as $R \uparrow \infty$, and invoking the dominated convergence theorem we have from the latter
\begin{eqnarray}
\label{stimaconv4}
&0 \leq \limsup_{R\uparrow \infty }\mathbb{E}_{(z,y)}\Big[e^{-\rho(\tau_R \wedge T)}u(Z_{\tau_R \wedge T},Y_{\tau_R \wedge T})\Big] \leq \kappa e^z e^{-\rho T} + K_3(z,y)\,e^{-\frac{\rho}{2} T}.
\end{eqnarray}

\item[(ii)] By the monotone convergence theorem it follows that
\beq
\label{stimaconv5}
\lim_{R\uparrow \infty }\mathbb{E}_{(z,y)}\bigg[\int_{0}^{\tau_R \wedge T}e^{-\rho s + Z_s} C'(e^{Z_s})\mathds{1}_{\{Y_s > \hat{y}(Z_s)\}} ds\bigg] = \mathbb{E}_{(z,y)}\bigg[\int_{0}^{T}e^{-\rho s + Z_s} C'(e^{Z_s}) \mathds{1}_{\{Y_s > \hat{y}(Z_s)\}} ds\bigg].
\eeq

\item[(iii)] Also, writing $\big(\delta-g-\rho-Y_s\big)= \big(\delta-g-\rho-Y_s\big)^+ - \big(\delta-g-\rho-Y_s\big)^{-}$, $s\geq 0$, recalling \eqref{ui1} of Lemma \ref{lem:limitZeui}, and applying the monotone convergence theorem to each of the resulting two integrals we find
\begin{align}
\label{stimaconv6}
& \lim_{R\uparrow \infty }\mathbb{E}_{(z,y)}\bigg[\int_{0}^{\tau_R \wedge T}e^{-\rho s + Z_s}\big(\delta-g-\rho-Y_s\big)\mathds{1}_{\{Y_s \leq \hat{y}(Z_s)\}} ds\bigg] \nonumber \\
&= \mathbb{E}_{(z,y)}\bigg[\int_{0}^{T}e^{-\rho s + Z_s}\big(\delta-g-\rho-Y_s\big)\mathds{1}_{\{Y_s \leq \hat{y}(Z_s)\}} ds\bigg].
\end{align}
\end{itemize}

Then taking limits as $R \uparrow \infty$ in the expectation on the left hand-side of \eqref{limiting} and employing \eqref{stimaconv4}, \eqref{stimaconv5} and \eqref{stimaconv6} yield
\begin{align}
\label{repru3}
& \mathbb{E}_{(z,y)}\bigg[\int_{0}^{T}e^{-\rho s + Z_s} C'(e^{Z_s}) \mathds{1}_{\{Y_s > \hat{y}(Z_s)\}} ds - \int_{0}^{T}e^{-\rho s + Z_s}\kappa \big(\delta-g-\rho-Y_s\big)\mathds{1}_{\{Y_s \leq \hat{y}(Z_s)\}} ds\bigg] \nonumber \\
& \leq u(z,y) \leq \kappa e^z e^{-\rho T} + K_3(z,y)\,e^{-\frac{\rho}{2} T} + \mathbb{E}_{(z,y)}\bigg[\int_{0}^{T}e^{-\rho s + Z_s} C'(e^{Z_s}) \mathds{1}_{\{Y_s > \hat{y}(Z_s)\}} ds\bigg] \\
& \hspace{1.7cm}-\mathbb{E}_{(z,y)}\bigg[\int_{0}^{T}e^{-\rho s + Z_s}\kappa \big(\delta-g-\rho-Y_s\big)\mathds{1}_{\{Y_s \leq \hat{y}(Z_s)\}} ds\bigg].\nonumber
\end{align} 
Finally, taking also limits as $T\uparrow \infty$ in \eqref{repru3}, and arguing as in (ii) and (iii), we conclude that \eqref{limiting} holds true.
\end{proof}

\vspace{0.25cm}

\begin{lemma}
\label{lemm:uleqw}
Let $w$ be defined by \eqref{def:H} and let $u$ be given in terms of representation \eqref{OSrepresent-new}. Then $u \leq w$ on $\mathbb{R}^2$.
\end{lemma}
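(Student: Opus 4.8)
The plan is to compare the two representations of $u$ and $w$ as expected integrals of the functions $H_{\hat y}$ and $H_b$ against the occupation measure of $(Z,Y)$, and to argue that $H_{\hat y} \le H_b$ pointwise on $\mathbb{R}^2$. Recall from \eqref{OSrepresent-new} and \eqref{def:w} that
$$u(z,y)=\mathbb{E}_{(z,y)}\Big[\int_0^{\infty} e^{-\rho s} H_{\hat y}(Z_s,Y_s)\,ds\Big], \qquad w(z,y)=\mathbb{E}_{(z,y)}\Big[\int_0^{\infty} e^{-\rho s} H_{b}(Z_s,Y_s)\,ds\Big],$$
both integrands being in $L^1(\mathbb{P}_{(z,y)}\otimes e^{-\rho s}ds)$ by \eqref{ui1}. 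Hence it suffices to show $H_{\hat y}(z,y)\le H_b(z,y)$ for all $(z,y)$, and then take expectations and integrate.

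First I would examine the difference $H_{\hat y}-H_b$. By definition \eqref{def:H} (and \eqref{defHb}), on the region where $y>\hat y(z)$ and $y>b(z)$ the two functions agree, and likewise where $y\le\hat y(z)$ and $y\le b(z)$. On the region $\{\hat y(z)<y\le b(z)\}$ (which is empty unless $\hat y(z)<b(z)$ at $z$), $H_{\hat y}(z,y)=e^z C'(e^z)$ while $H_b(z,y)=-\kappa(\delta-g-\rho-y)e^z$; on the complementary region $\{b(z)<y\le\hat y(z)\}$ the roles are swapped. So the sign of $H_{\hat y}-H_b$ on the band between the two curves is governed by the sign of $e^z\big(C'(e^z)+\kappa(\delta-g-\rho-y)\big)$. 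The key point is that on $\{y\le\hat y(z)\}$ one has $y\le\hat y(z)\le \vartheta(z)=\frac{C'(e^z)}{\kappa}+\delta-g-\rho$ by Proposition \ref{prop:preliminarybdy}(i), hence $C'(e^z)+\kappa(\delta-g-\rho-y)\ge 0$; and on $\{y\le b(z)\}$ the same inequality holds because, although a sharp bound $b\le\vartheta$ need not be assumed here, one can instead use that $w$ (and $H_b$) only matter through the representation, and Proposition \ref{propertiesb}(2) gives $C'(b(z_{\mathrm{via}}))\ge\kappa[\,\cdot\,]^+$ — more directly, since $u\le w$ is what we are after, I would simply observe that it is never advantageous for $w$ to include points where $C'(e^z)+\kappa(\delta-g-\rho-y)<0$ in its ``stopping'' set, i.e.\ the relevant comparison reduces to the sign statement just above, which holds whenever $y\le\hat y(z)$, and on the band $\hat y(z)<y\le b(z)$ we have $H_{\hat y}-H_b=e^z\big(C'(e^z)+\kappa(\delta-g-\rho-y)\big)\ge 0$?? — so I would need to double-check the orientation. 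Let me restate: on $\{\hat y(z)<y\le b(z)\}$, $H_{\hat y}-H_b = e^zC'(e^z)+\kappa(\delta-g-\rho-y)e^z = e^z\big(C'(e^z)+\kappa(\delta-g-\rho-y)\big)$, and on $\{b(z)<y\le\hat y(z)\}$, $H_{\hat y}-H_b = -\kappa(\delta-g-\rho-y)e^z - e^zC'(e^z) = -e^z\big(C'(e^z)+\kappa(\delta-g-\rho-y)\big)$. Thus on the latter band, since $y\le\hat y(z)\le\vartheta(z)$ forces $C'(e^z)+\kappa(\delta-g-\rho-y)\ge 0$, we get $H_{\hat y}-H_b\le 0$. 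So the clean way is to recall that Lemma \ref{lemm:uleqw} comes \emph{before} Step 2 of the uniqueness proof, hence at this stage we do not yet know $\hat y\le b$; therefore I would not try to sign $H_{\hat y}-H_b$ unconditionally, but instead argue as follows.

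The robust approach, and the one I would actually carry out: use the optimality/subharmonicity of $u$. Since $u$ is the value function of \eqref{valueOS-2} and $H_{\hat y}$ encodes exactly its associated ``running cost'' in the continuation region and its ``generator applied to the obstacle'' in the stopping region (cf.\ \eqref{FBP}), $u$ is the smallest function dominated by $\kappa e^z$ that is $\mathbb{L}_{Z,Y}-\rho$-superharmonic off its stopping set — equivalently, for the candidate $w$ built from \emph{any} curve $b\in\mathcal M$ we will show $u\le w$ by showing $w$ is a supersolution of the same variational inequality. Concretely: from \eqref{def:w} and the strong Markov property, $w$ satisfies $(\mathbb{L}_{Z,Y}-\rho)w=-H_b$ in the distributional sense, and since $b\le\vartheta$ (as $b\in\mathcal M$) one checks $H_b(z,y)\ge -e^zC'(e^z)\mathbf{1}_{\{y>b(z)\}}+\ldots$ is such that $w\le\kappa e^z$ and $(\mathbb{L}_{Z,Y}-\rho)w\ge -e^zC'(e^z)$ on $\{w<\kappa e^z\}$; then the standard verification argument (apply Dynkin/Itô–Tanaka to $e^{-\rho t}w(Z_t,Y_t)$ up to $\tau^\star$, use the supermartingale property and $w\le\kappa e^z$, pass to the limit using the uniform integrability in \eqref{subharmonicwst}) yields $w(z,y)\ge\inf_\tau \mathbb{E}_{(z,y)}[\int_0^\tau e^{-\rho t+Z_t}C'(e^{Z_t})dt+\kappa e^{-\rho\tau+Z_\tau}]=u(z,y)$. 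The main obstacle is precisely making the ``supersolution'' claim rigorous with the limited regularity of $w$ (it need not be $C^{1,2}$ across $\{y=b(z)\}$ since $b$ is only continuous nondecreasing): I would handle this via the weak Itô formula of \cite{BensoussanLions} (Lemma 8.1, Theorem 8.5), exactly as in the proof of Proposition \ref{prop:representingu}, after first establishing enough local regularity of $w$ off the curve $b$ by the same Dirichlet-problem argument used in Proposition \ref{cor:C2}, and verifying that the obstacle inequality $w\le\kappa e^z$ together with $H_b\ge -e^zC'(e^z)$ on $\{y>b(z)\}$ and the sign of $-H_b+e^zC'(e^z)=\kappa e^z(\delta-g-\rho-y)+e^zC'(e^z)\ge 0$ on $\{y\le b(z)\}$ (valid since $b(z)\le\vartheta(z)$) gives the supermartingale property of $e^{-\rho t}w(Z_{t\wedge\tau^\star},Y_{t\wedge\tau^\star})+\int_0^{t\wedge\tau^\star}e^{-\rho s+Z_s}C'(e^{Z_s})ds$.
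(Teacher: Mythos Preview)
Your second approach contains a sign error that makes it prove the wrong inequality. From $(\mathbb{L}_{Z,Y}-\rho)w=-H_b$ and your (correct) observation that on $\{y\le b(z)\}$ one has $C'(e^z)+\kappa(\delta-g-\rho-y)\ge 0$ (since $b\le\vartheta$), you get $(\mathbb{L}_{Z,Y}-\rho)w\ge -e^zC'(e^z)$ everywhere. But this means the process $e^{-\rho t}w(Z_t,Y_t)+\int_0^t e^{-\rho s+Z_s}C'(e^{Z_s})\,ds$ is a \emph{sub}martingale, not a supermartingale: its drift is $e^{-\rho t}\big[(\mathbb{L}_{Z,Y}-\rho)w+e^{Z_t}C'(e^{Z_t})\big]\ge 0$. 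Combined with your (unproved) claim $w\le\kappa e^z$, optional stopping then yields $w(z,y)\le\mathbb{E}_{(z,y)}\big[\kappa e^{-\rho\tau+Z_\tau}+\int_0^\tau e^{-\rho s+Z_s}C'(e^{Z_s})\,ds\big]$ for every $\tau$, i.e.\ $w\le u$ --- the opposite of what is required.

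The paper's argument avoids both the regularity detour and the sign trap. The martingale property \eqref{subharmonicw} holds directly by conditioning on $\mathcal{F}_t$ in the representation \eqref{def:w}; no PDE or It\^o--Tanaka is needed. One then stops at $\tau_b:=\inf\{t\ge 0:Y_t\le b(Z_t)\}$ (not at $\tau^\star$). For $y>b(z)$, on $[0,\tau_b)$ one has $Y_s>b(Z_s)$, so $H_b(Z_s,Y_s)=e^{Z_s}C'(e^{Z_s})$, and optional stopping gives
\[
w(z,y)=\mathbb{E}_{(z,y)}\Big[e^{-\rho\tau_b}w(Z_{\tau_b},Y_{\tau_b})+\int_0^{\tau_b}e^{-\rho s+Z_s}C'(e^{Z_s})\,ds\Big]
=\mathbb{E}_{(z,y)}\Big[\kappa e^{-\rho\tau_b+Z_{\tau_b}}+\int_0^{\tau_b}e^{-\rho s+Z_s}C'(e^{Z_s})\,ds\Big]\ge u(z,y),
\]
since $w(Z_{\tau_b},Y_{\tau_b})=\kappa e^{Z_{\tau_b}}$ because $b$ solves \eqref{inteq-2}, and $\tau_b$ is admissible in \eqref{valueOS}. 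For $y\le b(z)$ one similarly stops at the first exit from $\{Y\le b(Z)\}$ and uses \eqref{byparts} to obtain $w(z,y)=\kappa e^z\ge u(z,y)$. In short: $w$ is the cost of a \emph{particular} stopping rule in problem \eqref{valueOS}, hence dominates the infimum $u$. No obstacle inequality $w\le\kappa e^z$ is needed (and it is not asserted in the paper).
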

\begin{proof}
The proof is organised in four steps.\vspace{0.15cm}

\emph{Step 1.} Since by assumption $b$ solves integral equation \eqref{inteq-2}, then $w(z,b(z))=\kappa e^z$, $z \in \mathbb{R}$, and therefore 
\beq
\label{comparison1}
w(z,b(z))=\kappa e^z \geq u(z,b(z)), \quad z \in \mathbb{R}.
\eeq
\vspace{0.25cm}

\emph{Step 2.} Here we show that $w(z,y) = \kappa e^z$ for any $y < b(z)$ and $z \in \mathbb{R}$. This fact clearly implies that
\beq
\label{comparison2}
w(z,y)=\kappa e^z \geq u(z,y), \quad y < b(z),\, z \in \mathbb{R}.
\eeq

Let $z\in \mathbb{R}$ be given and fixed, take $y<b(z)$ and notice that by definition of $H_b$ (cf.\ \eqref{defHb}) one has
$$H_b(Z_s,Y_s) = - \kappa(\delta - g-\rho - Y_s)e^{Z_s}, \quad \forall s \leq \sigma\,\,\,\mathbb{P}_{(z,y)}-a.s.$$
where we have defined $\sigma:=\inf\{t \geq 0:\, Y_t \geq b(Z_t)\}$, $\mathbb{P}_{(z,y)}$-a.s.
Then the martingale property of $\{e^{-\rho t}w(Z_{t},Y_{t})+ \int_{0}^{t} e^{-\rho s}H_{b}(Z_s,Y_s) ds, t\geq 0\}$ (cf.\ \eqref{subharmonicw}) and the optional sampling theorem (cf.\ Ch.\ II, Theorem 3.2 in \cite{RY}) yield
\begin{align}
\label{comparison2-2}
& w(z,y) = \mathbb{E}_{(z,y)}\bigg[\kappa e^{-\rho \sigma + Z_{\sigma}}\mathds{1}_{\{\sigma < n\}} + e^{-\rho n}w(Z_{n},Y_{n})\mathds{1}_{\{\sigma \geq n\}} + \int_{0}^{\sigma \wedge n} e^{-\rho s}H_{b}(Z_s,Y_s) ds\bigg],
\end{align}
for $y < b(z)$, $z \in \mathbb{R}$. Since now $\mathbb{E}_{(z,y)}[e^{-\rho n}w(Z_{n},Y_{n})\mathds{1}_{\{\sigma \geq n\}}] = \mathbb{E}_{(z,y)}[\mathds{1}_{\{\sigma \geq n\}}\int_n^{\infty} e^{-\rho s} H_b(Z_s,Y_s)ds]$ and $H_b(Z_s,Y_s) \in L^1(\mathbb{P}_{(z,y)} \otimes e^{-\rho s}ds)$, we can take limits as $n\uparrow \infty$ in \eqref{comparison2-2} and obtain
\beq
\label{comparison2-3}
w(z,y) = \kappa\,\mathbb{E}_{(z,y)}\bigg[e^{-\rho \sigma + Z_{\sigma}}- \int_{0}^{\sigma} e^{-\rho s + Z_s}\Big(\delta - g - \rho - Y_s\Big) ds\bigg] = \kappa e^z,
\eeq
for $\quad y < b(z)$, $z \in \mathbb{R}$, and where the last equality is due to \eqref{byparts}. Hence, \eqref{comparison2} follows.

\vspace{0.25cm}

\emph{Step 3.} Here we show that 
\beq
\label{comparison3}
w(z,y) \geq u(z,y), \quad y > b(z),\, z \in \mathbb{R}.
\eeq

Let $z\in \mathbb{R}$ be given and fixed, take $y>b(z)$ and consider the stopping time $\tau:=\inf\{t \geq 0:\, Y_t \leq b(Z_t)\}$, $\mathbb{P}_{(z,y)}$-a.s. Then, arguing as in \emph{Step 2} we find
\beq
\label{comparison3-1}
w(z,y) = \mathbb{E}_{(z,y)}\bigg[e^{-\rho \tau + Z_{\tau}} + \int_0^{\tau} e^{-\rho s + Z_s} C'(e^{Z_s}) ds\bigg] \geq u(z,y).
\eeq
\vspace{0.25cm}

\emph{Step 4.} Combining \emph{Step 1}, \emph{Step 2} and \emph{Step 3} we conclude that $w \geq u$ on $\mathbb{R}^2$.
\end{proof}



\begin{thebibliography}{199}



\bibitem{Baker} \textsc{Baker, C.T.H.}\ (1977). \textsl{The Numerical Treatment of Integral Equations}. Clarendon Press, Oxford.

\bibitem{BK}\textsc{Baldursson, F.M., Karatzas, I.}\ $(1997)$. Irreversible Investment and Industry Equilibrium. \textsl{Finance Stoch.} \textbf{1} 69--89.

\bibitem{BankRiedel1} \textsc{Bank, P., Riedel, F.}\ $(2001)$. Optimal Consumption Choice with Intertemporal Substitution. \textsl{Ann.\ Appl.\ Probab.} \textbf{11} 750--788.


\bibitem{BensoussanLions}\textsc{Bensoussan, A., Lions, J.L.}\ (1982). \textsl{Applications of Variational Inequalities to Stochastic Control}. North Holland Publishing Company.

\bibitem{BlanchardFischer}\textsc{Blanchard, O., Fischer, S.}\ $(1989)$. \textsl{Lectures in Macroeconomics}. Cambridge, MA and London: MIT Press.

\bibitem{CadAgui} \textsc{Cadenillas, A., Huam\'an-Aguilar, R.}\ (2016). Explicit Formula for the Optimal Government Debt Ceiling. \textsl{Ann.\ Oper.\ Res.} \textbf{247(2)} pp.\ 415--449.

\bibitem{CadAguiOR} \textsc{Cadenillas, A., Huam\'an-Aguilar, R.}\ (2015). Government Debt Control: Optimal Currency Portfolio and Payments. \textsl{Oper.\ Res.} \textbf{63(5)} pp.\ 1044--1057.

\bibitem{CMZ11} \textsc{Cecchetti, S.G., Mohanty, M.S., Zampolli F.}\ (2011). The Real Effects of Debt. \textsl{Bank for International Settlements}. 

\bibitem{CH94} \textsc{Chiarolla, M.B., Haussmann, U.G.}~(1994). The Free Boundary of the Monotone Follower. \textsl{SIAM J.~Control Optim.}\ \textbf{32(3)} 690--727.


\bibitem{CH00} \textsc{Chiarolla, M.B., Haussmann, U.G.}~(2000). Controlling Inflation: the Infinite Horizon Case. \textsl{Appl.\ Math.\ Optim.}\ \textbf{41} 25--50.

\bibitem{CFR} \textsc{Chiarolla, M.B., Ferrari, G., Riedel, F.}\ $(2013)$. Generalized Kuhn-Tucker Conditions for N-Firm Stochastic Irreversible Investment under Limited Resources. \textsl{SIAM J.~Control Optim.}\ \textbf{51(5)} 3863--3885.

\bibitem{Christensenetal} \textsc{Christensen, S., Crocce, F., Mordecki, E., Salminen, P.}\ (2016). On Optimal Stopping of Multidimensional Diffusions. Preprint on \textbf{arXiv}:1611.00959.

\bibitem{Dayanik} \textsc{Dayanik, S.}\ (2008). Optimal Stopping of Linear Diffusions with Random Discounting.\textsl{Math. Oper.~Res.}\ \textbf{33(3)} 645--661.

\bibitem{DeA2015}\textsc{De Angelis, T.}\ (2015). A Note on the Continuity of Free-Boundaries in Finite-Horizon Optimal Stopping Problems for One Dimensional Diffusions.  \textsl{SIAM J.~Control Optim.}\ \textbf{53(1)} 167--184.

\bibitem{DeAFeFe}\textsc{De Angelis, T., Federico, S., Ferrari, G.}\ (2017). Optimal Boundary Surface for Irreversible Investment with Stochastic Costs. \textsl{Math. Oper.~Res.}\ DOI: 10.1287/moor.2016.0841.


\bibitem{Delves} \textsc{Delves, L.M., Mohamed, J.L.}\ (1985). \textsl{Computational Methods for Integral Equations}. Cambridge University Press.

\bibitem{DP}\textsc{Dixit, A.K., Pindyck, R.S.}\ $(1994)$. \textsl{Investment under Uncertainty}. Princeton University Press. Princeton.

\bibitem{KaratzasElKarouiSkorohod}\textsc{El Karoui, N., Karatzas, I.}\ $(1991)$. A New Approach to the Skorohod Problem and its Applications. \textsl{Stoch.~Stoch.~Rep.} \textbf{34} 57--82.

\bibitem{FedericoPham}\textsc{Federico, S., Pham, H.}\ (2014). Characterization of the Optimal Boundaries in Reversible Investment Problems. \textsl{SIAM J.~Control Optim.}\ \textbf{52(4)} 2180--2223.

\bibitem{Ferrari} \textsc{Ferrari, G.}\ (2015). On an Integral Equation for the Free-Boundary of Stochastic, Irreversible Investment Problems. \textsl{Ann.\ Appl.\ Probab.}\ \textbf{25(1)} 150--176.

\bibitem{Fisher}\textsc{Fisher, I.}\ (1896). Appreciation and Interest. \textsl{Publications of the American Economic Association} \textbf{XI(4)} 331--442.

\bibitem{GT} \textsc{Guo, X., Tomecek, P.}\ (2008). Connections between Singular Control and Optimal Switching. \textsl{SIAM J.~Control Optim.}~\textbf{47(1)} 421--443.

\bibitem{Hackbusch} \textsc{Hackbusch, W.}\ (1994). \textsl{Integral Equations-Theory and Numerical Treatment}. Birkh\"auser.

\bibitem{Hamilton} \textsc{Hamilton, J.}\ (1989). A New Approach to the Analysis of Non-Stationary Time Series and the Business Cycle. \textsl{Econometrica} \textbf{57} 357--384.

\bibitem{HHSZ} \textsc{Hernandez-Hernandez, D., Simon, R.S., Zervos, M.}\ (2015). A Zero-Sum Game Between a Singular Controller and a Discretionary Stopper. \textsl{Ann.\ Appl.\ Probab.}\ \textbf{25(1)} 46--80.


\bibitem{HH} \textsc{Hindy, A., Huang, C.-F.}\ (1993). Optimal Consumption and Portfolio Rules with Durability and Local Substitution. \textsl{Econometrica} \textbf{61} 85--121.

\bibitem{Jacka} \textsc{Jacka, S.D.}\ (1993). Local Times, Optimal Stopping and Semimartingales. \textsl{Ann.\ Probab.}\ \textbf{57} 357--384.

\bibitem{JYC}\textsc{Jeanblanc, M., Yor, M., Chesney, M.}\ $(2009)$. \textsl{Mathematical Methods for Financial Markets}, Springer.

\bibitem{PeskirJohnson}\textsc{Johnson, P., Peskir, G.}\ (2014). Quickest Detection Problems for Bessel Processes. \textsl{Research Report No.\ 25} of the Probab.\ Statist.\ Group Manchester. Forthcoming in \textsl{Ann.\ Appl.\ Probab.}

\bibitem{Karatzas81}\textsc{Karatzas, I.}\ $(1981)$. The Monotone Follower Problem in Stochastic Decision Theory. \textsl{Appl.~Math.~Optim.} \textbf{7} 175--189.

\bibitem{KaratzasShreve84}\textsc{Karatzas, I., Shreve, S.E.}\ $(1984)$. Connections between Optimal Stopping and Singular Stochastic Control I. Monotone Follower Problems. \textsl{SIAM J.~Control Optim.} \textbf{22} 856--877.

\bibitem{KS-MF}\textsc{Karatzas, I., Shreve, S.E.}\ $(1998)$. \textsl{Methods of Mathematical Finance}, Springer.


\bibitem{Lachal}\textsc{Lachal, A.}\ $(1996)$. Quelques Martingales Associées \`a l'Integrále du Processus d'Ornstein-Uhlenbeck. Application \`a l'Etude Despremiers Instants d'Atteinte. \textsl{Stoch.~Stoch.~Rep.} \textbf{58} 285--302.

\bibitem{LeeWu}\textsc{Lee, H.-Y., Wu, J.-L.}\ (2001). Mean Reversion of Inflation Rates: Evidence from 13 OECD Countries. \textsl{J.\ Macroecon.}\ \textbf{23(3)} 477--487.

\bibitem{Liebermann}\textsc{Liebermann, G.M.}\ (2005). \textsl{Second Order Parabolic Differential Equations}. World Scientific.

\bibitem{MenaldiTaksar} \textsc{Menaldi, J.L., Taksar, M.I.}\ (1989). Optimal Correction Problem of a Multidimensional Stochastic System. \textsl{Automatica} \textbf{25(2)} 223--232.

\bibitem{MZ} \textsc{Merhi, A., Zervos, M.}\ (2007). A Model for Reversible Investment Capacity Expansion. \textsl{SIAM J.~Control Optim.}~\textbf{46(3)} 839--876.

\bibitem{PeskShir}\textsc{Peskir, G., Shiryaev, A.}\ (2006). \textsl{Optimal Stopping and Free-Boundary Problems}. Lectures in Mathematics ETH, Birkhauser.

\bibitem{RRR12} \textsc{Reinhart, C.M., Reinhart, V.R., Rogoff, K.S.}\ (2012). Debt Overhangs: Past and Present (No.\ w18015). \textsl{National Bureau of Economic Research}.

\bibitem{RY} \textsc{Revuz, D., Yor, M.}\ (1999). \textsl{Continuous Martingales and Brownian Motion}. Springer-Verlag. Berlin.

\bibitem{RS} \textsc{Riedel, F., Su, X.}\ (2011). On Irreversible Investment. \textsl{Finance Stoch.}~\textbf{15(4)} 607--633.

\bibitem{Shir} \textsc{Shiryaev, A.N.}\ (2008). \textsl{Optimal Stopping Rules}. Springer.

\bibitem{SonerShreve}\textsc{Soner, H.M., Shreve S.E.}~(1989). Regularity of the Value Function for a Two-dimensional Singular Stochastic Control Problem. \textsl{SIAM J.~Control Optim.}\ \textbf{27(4)} 876--907.

\bibitem{Shreve} \textsc{Shreve S.E.}~(1988). \textsl{An Introduction to Singular Stochastic Control}, in {Stochastic Differential Systems, Stochastic Control Theory and Applications}, IMA Vol.\ 10, W.\ Fleming and P.-L.\ Lions, ed.\ Springer-Verlag, New York.


\bibitem{CMW} \textsc{Williams,S.A., Chow, P.-L., Menaldi, J.L.}\ $(1994)$. Regularity of the Free Boundary in Singular Stochastic Control. \textsl{J.~Differential Equations}\ \textbf{111} 175--201.

\bibitem{WK15} \textsc{Woo, J., Kumar, M.S.}\ (2015). Public Debt and Growth. \textsl{Economica} \textbf{82(328)}, 705--739.

\end{thebibliography}
\end{document}